\newcommand{\term}{\emph}
\newcommand{\field}[1]{\mathbb{#1}}
\newcommand{\N}{\mathbb{N}}
\newcommand{\R}{\field{R}}
\newcommand{\extR}{\overline \R}
\newcommand{\B}{B}
\newcommand{\norm}[1]{\|#1\|}
\newcommand{\inv}[1]{#1^{-1}}
\newcommand{\grad}{\nabla}
\newcommand{\freevar}{\,\boldsymbol\cdot\,}
\newcommand{\Union}\bigcup
\newcommand{\Isect}\bigcap
\newcommand{\union}\cup
\newcommand{\isect}\cap
\newcommand{\bigunion}\bigcup
\newcommand{\bigisect}\bigcap
\newcommand{\defeq}{:=}
\newcommand{\downto}{\searrow}
\newcommand{\upto}{\nearrow}
\newcommand{\subdiff}{\partial}
\newcommand{\pinv}[1]{#1^\dag}
\DeclareMathOperator{\closure}{cl}
\DeclareMathOperator{\Dom}{dom}
\DeclareMathOperator{\conv}{conv}
\def \uminus@sym{\setbox0=\hbox{$\cup$}\rlap{\hbox 
        to\wd0{\hss\raise0.5ex\hbox{$\scriptscriptstyle{-}$}\hss}}\box0}
    \def \uminus    {\mathrel{\uminus@sym}}
\newcommand{\iprod}[2]{\langle #1,#2\rangle}
\newcommand{\weakto}{\mathrel{\rightharpoonup}}
\def \weaktostar@sym{\setbox0=\hbox{$\rightharpoonup$}\rlap{\hbox 
        to\wd0{\hss\raise1ex\hbox{$\scriptscriptstyle{*\,}$}\hss}}\box0}
    \def \weaktostar    {\mathrel{\weaktostar@sym}}
\renewcommand{\tilde}{\widetilde}
\renewcommand{\L}{\mathcal{L}}
\renewcommand{\d}{\,d} 
\newcommand{\E}{\mathcal{E}}
\crefname{Algorithm}{Algorithm}{Algorithms}
\newcommand{\setto}{\rightrightarrows}
\def\extR{\overline \R}
\def\linear{\mathcal{L}}
\newcommand{\linearLArrow}[1][]{\linear_{\triangleleft\ifx&#1&\else,\,#1\fi}}
\newcommand{\linearLArrowSpecial}[1][]{\linear^{\star}_{\triangleleft\ifx&#1&\else,\,#1\fi}}
\def\opt#1{\widetilde #1}
\def\realopt#1{\widehat #1}
\def\this#1{#1^i}
\def\nexxt#1{#1^{i+1}}
\def\overnext#1{\bar #1^{i+1}}
\def\optu{{\opt{u}}}
\def\optx{{\opt{x}}}
\def\opty{{\opt{y}}}
\def\realoptu{{\realopt{u}}}
\def\realoptv{{\realopt{v}}}
\def\realoptx{{\realopt{x}}}
\def\realopty{{\realopt{y}}}
\def\nextu{\nexxt{u}}
\def\nextx{\nexxt{x}}
\def\nexty{\nexxt{y}}
\def\nextv{\nexxt{v}}
\def\thisu{\this{u}}
\def\thisx{\this{x}}
\def\thisz{\this{z}}
\def\thisy{\this{y}}
\def\thisv{\this{v}}
\def\overnextx{\overnext{x}}
\def\E{\mathbb{E}}
\def\P{\mathbb{P}}
\def\Tau{T}
\def\TauTest{\Phi}
\def\tauTest{\phi}
\def\SigmaTest{\Psi}
\def\sigmaTest{\psi}
\def\TauTilde{{\tilde \Tau}}
\def\SigmaTilde{{\tilde \Sigma}}
\def\TauMonotoneSpace{\mathcal{T}}
\def\SigmaMonotoneSpace{\mathcal{S}}
\def\gap{\mathcal{G}}
\def\BregFn{V}
\def\AltBregFn{V'}
\def\GammaLift#1{\Xi_{#1}}
\def\SAlg{\mathcal{O}}
\def\Space{U}
\newcommand{\Test}{Z}
\newcommand{\Precond}{M}
\newcommand{\Step}{W}
\newcommand{\Random}{\mathcal{R}}
\newcommand{\convex}{\mathrm{cpl}}
\DeclareFontFamily{U}{mathx}{\hyphenchar\font45}
\DeclareFontShape{U}{mathx}{m}{n}{<-> mathx10}{}
\DeclareSymbolFont{mathx}{U}{mathx}{m}{n}
\DeclareMathAccent{\widebar}{0}{mathx}{"73}
\newcommand{\Penalty}{\Delta}
\title{Testing and non-linear preconditioning of the proximal point method}
\begin{document}

\date{2017-03-16 (revised 2018-08-23)}
\author{
    Tuomo Valkonen\thanks{ModeMat, Escuela Politécnica Nacional, Quito, Ecuador; \emph{previously} Department of Mathematical Sciences, University of Liverpool, United Kingdom. \email{tuomo.valkonen@iki.fi}}
    }

\maketitle

\begin{abstract}
    Employing the ideas of non-linear preconditioning and testing of the classical proximal point method, we formalise common arguments in convergence rate and convergence proofs of optimisation methods to the verification of a simple iteration-wise inequality. When applied to fixed point operators, the latter can be seen as a generalisation of firm non-expansivity or the $\alpha$-averaged property.
    The main purpose of this work is to provide the abstract background theory for our companion paper ``Block-proximal methods with spatially adapted acceleration''. In the present account we demonstrate the effectiveness of the general approach on several classical algorithms, as well as their stochastic variants.
    Besides, of course, the proximal point method, these method include the gradient descent, forward--backward splitting, Douglas--Rachford splitting, Newton's method, as well as several methods for saddle-point problems, such as the Alternating Directions Method of Multipliers, and the Chambolle--Pock method.

    \textbf{Get the version from \url{http://tuomov.iki.fi/publications/}, citations broken in this one due arXiv being stuck in the 70s and not supporting biblatex (or 80s bibtex for that matter), hence not modern bibliography styles or utf8.}
\end{abstract}

\newenvironment{demonstration}[1][Proof of convergence]{\begin{proof}[#1]}{\end{proof}}


\section{Introduction}

The proximal point method for monotone operators \cite{martinet1970regularisation,rockafellar1976monotone}, while infrequently used by itself, can be found as a building block of many popular optimisation algorithms. Indeed, many important application problems can be written in the form \cbstart
\begin{equation*}
	\tag{P}
	\label{eq:problem}
	\min_x G(x) + J(x) + F(Kx)
\end{equation*}
for convex $G, J$ and $F$, and a linear operator $K$, with $G$ and $F$ non-smooth and $J$ smooth. \cbend
Examples abound in image processing and data science. 
The problem \eqref{eq:problem} can often be solved by methods such as forward--backward splitting, ADMM (alternating directions method of multipliers) and their variants \cite{beck2009fista,loris2011generalization,gabay,chambolle2010first}. They all involve a proximal point step.

The equivalent saddle point form of \eqref{eq:problem} is
\begin{equation*}
	\tag{S}
	\label{eq:saddle}
	\min_x ~\max_y~ G(x) + \cbstart J(x) \cbend + \iprod{Kx}{y} - F^*(y).
\end{equation*}
In particular within mathematical image processing and computer vision, a popular algorithm for solving \eqref{eq:saddle} with $J=0$ is the primal--dual method of Chambolle and Pock \cite{chambolle2010first}. As discovered in \cite{he2012convergence}, the method can most concisely be written as a \emph{preconditioned proximal point method}, solving on each iteration for $\nextu=(\nextx,\nexty)$ the variational inclusion 
\begin{equation*}
	\tag{PP$_0$}
	\label{eq:pp0}
	0 \in H(\nextu) + \Precond_{i+1}(\nextu-\thisu),
\end{equation*}
where the monotone operator
\begin{equation*}
    H(u) \defeq
        \begin{pmatrix}
            \subdiff G(x) + K^* y \\
            \subdiff F^*(y) -K x
        \end{pmatrix}
\end{equation*}
encodes the optimality condition $0 \in H(\realoptu)$ for \eqref{eq:saddle}.
In the standard proximal point method \cite{rockafellar1976monotone}, one would take $\Precond_{i+1}=I$ the identity. With this choice, \eqref{eq:pp0} is generally difficult to solve.
In the Chambolle--Pock method the \emph{preconditioning operator} is given for suitable step length parameters $\tau_i,\sigma_{i+1},\theta_i>0$ by
\begin{equation}
	\label{eq:precond-cpock-intro}
	\Precond_{i+1} \defeq \begin{pmatrix} \inv\tau_i I  & -K^* \\ -\theta_i K & \inv\sigma_{i+1} I \end{pmatrix}.
\end{equation}
This choice of $\Precond_{i+1}$ decouples the primal $x$ and dual $y$ updates, making the solution of \eqref{eq:pp0} feasible in a wide range of problems.
If $G$ is strongly convex, the step length parameters $\tau_i,\sigma_{i+1},\theta_i$ can be chosen to yield $O(1/N^2)$ convergence rates of an ergodic duality gap and the quadratic distance $\norm{\thisx-\realoptx}^2$.

In our earlier work \cite{tuomov-cpaccel}, we have modified $\Precond_{i+1}$ as well as the condition \eqref{eq:pp0} to still allow a level of mixed-rate acceleration when $G$ is strongly convex only on sub-spaces.
Our convergence proofs were based on \term{testing} the abstract proximal point method by a suitable operator, which encodes the desired and achievable convergence rates on relevant subspaces.

In the present paper, we extend this theoretical approach to non-linear preconditioning, non-invertible step-length operators, and arbitrary monotone operators $H$. Our main purpose is to provide the abstract background theory for our companion paper \cite{tuomov-blockcp}. Here, within these pages, we demonstrate that several classical optimisation methods---including the second-order Newton's method---can also be seen as variants of the proximal point method, and that their common convergence rate and convergence proofs reduce to the verification of a simple iteration-wise inequality. Through application of our theory to Browder's fixed point theorem \cite{browder1965nonexpansive} in \cref{sec:browder}, we see that our inequality generalises the concepts of firm non-expansivity or the $\alpha$-averaged property. Our theory also covers stochastic variants of the considered algorithms.

In \cref{sec:general-proofs}, we start by developing our theory for general monotone operators $H$. This extends, simplifies, and clarifies the more disconnected results from \cite{tuomov-cpaccel} that concentrated on saddle-point problems with preconditioners derived from \eqref{eq:precond-cpock-intro}.
We demonstrate our results on the basic proximal point method, gradient descent, forward--backward splitting, Douglas--Rachford splitting, and Newton's method. The proximal step in forward--backward splitting and proximal Newton's method can be introduced completely ``free'', without any additional proof effort, in our approach.
\cbstart
In \cref{sec:stochastic-examples} we demonstrate the further flexibility of our techniques by application to stochastic block coordinate methods. We refer to \cite{wright2015coordinate} for a review of this class of methods.
In the final \cref{sec:saddle,sec:gap} we specialise our work to saddle-point problems, and demonstrate the results on variants of the Chambolle--Pock method, and the Generalised Iterative Soft Thresholding (GIST) algorithm of \cite{loris2011generalization}.
Some of the derivations in these last two sections are quite abstract and general, as we will need this for our companion paper \cite{tuomov-blockcp} where we develop stochastic primal-dual methods with coordinate-wise adapted step lengths.
\cbend

Besides already cited works, other previous work related to ours includes that on generalised proximal point methods, such as \cite{censor1992proximal,chen1993convergence}, as well inertial methods for variational inclusions \cite{lorenz2014accelerated}.

\section{An abstract preconditioned proximal point iteration}
\label{sec:general-proofs}

\subsection{Notation and general setup}

We use $\convex(X)$ to denote the space of convex, proper, lower semicontinuous functions from $X$ to the extended reals $\extR \defeq [-\infty, \infty]$, and $\linear(X; Y)$ to denote the space of bounded linear operators between Hilbert spaces $X$ and $Y$. 
We denote the identity operator by $I$.
For $T,S \in \linear(X; X)$, we write $T \ge S$ when $T-S$ is positive semidefinite.
Also for possibly non-self-adjoint $T$, we introduce the inner product and norm-like notations
\begin{equation}
    \label{eq:iprod-def}
    \iprod{x}{z}_T \defeq \iprod{Tx}{z},
    \quad
    \text{and}
    \quad
    \norm{x}_T \defeq \sqrt{\iprod{x}{x}_T}.
\end{equation}
For a set $A \subset \R$, we write $A \ge 0$ if every element $t \in A$ satisfies $t \ge 0$.

Our overall wish is to find some $\realoptu \in \Space$, on a Hilbert space $\Space$, solving for a given set-valued map $H: \Space \setto \Space$ the variational inclusion
\begin{equation}
	\label{eq:h-opt}
	0 \in H(\realoptu).
\end{equation}
\cbstart
Throughout the manuscript, $\realoptu$ stands for an arbitrary root of a relevant map $H$.
\cbend
In the present \cref{sec:general-proofs}, $H$ will be arbitrary, but in \cref{sec:saddle,sec:gap}, where we specialise the results, we concentrate on $H$ arising from the saddle point problem \eqref{eq:saddle}.

Our strategy towards finding a solution $\realoptu$ is to introduce an arbitrary non-linear iteration-dependent \term{preconditioner} $\BregFn_{i+1}: \Space \to \Space$ and a \term{step length operator} $\Step_{i+1} \in \L(\Space; \Space)$. With these, we define the generalised proximal point method, which on each iteration $i \in \N$ solves $\nextu$ from 
\begin{equation*}
    \label{eq:pp}
    \tag{PP}
    0 \in \Step_{i+1} H(\nextu) + \BregFn_{i+1}(\nextu).
\end{equation*}
We assume that $\BregFn_{i+1}$ splits into $\Precond_{i+1} \in \linear(\Space; \Space)$, and $\AltBregFn_{i+1}: \Space \to \Space$ as
\begin{equation}
	\label{eq:bregfn-split}
	\BregFn_{i+1}(u) = \AltBregFn_{i+1}(u) + \Precond_{i+1}(u-\thisu).
\end{equation}
More generally, to rigorously extend our approach to cases that would otherwise involve set-valued $\BregFn_{i+1}$, we also consider for $\tilde H_{i+1}: \Space \setto \Space$ the iteration
\begin{equation}
	\label{eq:ppext}
	\tag{PP$^\sim$}
	0 \in \tilde H_{i+1}(\nextu)+\Precond_{i+1}(\nextu-\thisu).
\end{equation}
We say that \eqref{eq:pp} or \eqref{eq:ppext} is \term{solvable} for the iterates $\{\nextu\}_{i \in \N} \subset \Space$ if given any $u^0 \in \Space$, we can solve the corresponding inclusion to iteratively calculate $\nextu$ from $\thisu$ for each $i \in \N$.

\subsection{Basic estimates}
\label{sec:convergence-basic}

We analyse the preconditioned proximal point methods \eqref{eq:pp} and \eqref{eq:ppext} by applying a \term{testing operator} $\Test_{i+1} \in \L(\Space; \Space)$, following the ideas introduced in \cite{tuomov-cpaccel}.
The product $\Test_{i+1}\Precond_{i+1}$ with the linear part of the preconditioner, will, as we soon demonstrate, be an indicator of convergence rates.
In essence, as seen in the descent inequality \eqref{eq:convergence-result-main-h} of the next result, the operator forms a local metric (in the differential geometric sense) that measures closeness to a solution.

\begin{theorem}
    \label{thm:convergence-result-main-h}
    On a Hilbert space $\Space$, let $\tilde H_{i+1}: \Space \setto \Space$, and $\Precond_{i+1}, \Test_{i+1} \in \linear(\Space; \Space)$ for $i \in \N$.
	Suppose \eqref{eq:ppext} is solvable for $\{\nextu\}_{i \in \N} \subset \Space$.
    \cbstart
    If for all $i \in \N$, $\Test_{i+1}\Precond_{i+1}$ is self-adjoint, and for some $\Penalty_{i+1} \in \R$ and $\realoptu \in \Space$ the \term{fundamental condition}
	\begin{equation}
		\label{eq:convergence-fundamental-condition-iter-h}
		\tag{CI$^\sim$}
        \begin{split}
		\iprod{\tilde H_{i+1}(\nextu)}{\nextu-\realoptu}_{\Test_{i+1}}
        &
		\ge 
        \frac{1}{2}\norm{\nextu-\realoptu}_{\Test_{i+2}\Precond_{i+2}-\Test_{i+1}\Precond_{i+1}}^2
        \\ \MoveEqLeft[-1]
        - \frac{1}{2}\norm{\nextu-\thisu}_{\Test_{i+1} \Precond_{i+1}}^2
        - \Penalty_{i+1}(\realoptu),
        \end{split}
	\end{equation}
    holds, then so do the \term{quantitative $\Delta$-Féjer monotonicity}
    \begin{equation}
        \label{eq:quantitative-fejer-pre}
        \tag{QF}
        \frac{1}{2}\norm{\nextu-\realoptu}_{\Test_{i+2}\Precond_{i+2}}^2
        \le
        \frac{1}{2}\norm{\thisu-\realoptu}_{\Test_{i+1}\Precond_{i+1}}^2
        + \Penalty_{i+1}(\realoptu)
        \quad (i \in \N)
    \end{equation}
    \cbend%
    as well as the \term{descent inequality}
    \begin{equation}
        \label{eq:convergence-result-main-h}
        \tag{DI}
        \frac{1}{2}\norm{u^N-\realoptu}^2_{\Test_{N+1}\Precond_{N+1}}
        \le
        \frac{1}{2}\norm{u^0-\realoptu}^2_{\Test_{1}\Precond_{1}}
        +
        \sum_{i=0}^{N-1} \Penalty_{i+1}(\realoptu)
        \quad
        (N \ge 1).
    \end{equation}	
\end{theorem}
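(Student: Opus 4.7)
The plan is to reduce \eqref{eq:convergence-fundamental-condition-iter-h} to \eqref{eq:quantitative-fejer-pre} through a single application of the three-point ``polarisation'' identity, and then obtain \eqref{eq:convergence-result-main-h} by telescoping. Write $M_{i+1} \defeq \Test_{i+1}\Precond_{i+1}$, which by assumption is self-adjoint.

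First I would use the iteration \eqref{eq:ppext} to eliminate $\tilde H_{i+1}(\nextu)$: by solvability, there exists a selector $w_{i+1} \in \tilde H_{i+1}(\nextu)$ with $w_{i+1} = -\Precond_{i+1}(\nextu-\thisu)$. Substituting into the left-hand side of \eqref{eq:convergence-fundamental-condition-iter-h} and using the definition \eqref{eq:iprod-def} gives
\begin{equation*}
    \iprod{\tilde H_{i+1}(\nextu)}{\nextu-\realoptu}_{\Test_{i+1}}
    = -\iprod{M_{i+1}(\nextu-\thisu)}{\nextu-\realoptu}.
\end{equation*}
Because $M_{i+1}$ is self-adjoint, the three-point identity
\begin{equation*}
    2\iprod{M_{i+1}(a-b)}{a-c} = \norm{a-b}_{M_{i+1}}^2 + \norm{a-c}_{M_{i+1}}^2 - \norm{b-c}_{M_{i+1}}^2
\end{equation*}
applies with $a=\nextu$, $b=\thisu$, $c=\realoptu$, yielding
\begin{equation*}
    \iprod{\tilde H_{i+1}(\nextu)}{\nextu-\realoptu}_{\Test_{i+1}}
    = \tfrac{1}{2}\norm{\thisu-\realoptu}_{M_{i+1}}^2 - \tfrac{1}{2}\norm{\nextu-\realoptu}_{M_{i+1}}^2 - \tfrac{1}{2}\norm{\nextu-\thisu}_{M_{i+1}}^2.
\end{equation*}

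Next I would plug this identity into \eqref{eq:convergence-fundamental-condition-iter-h}. The term $-\tfrac{1}{2}\norm{\nextu-\thisu}_{M_{i+1}}^2$ appears on both sides and cancels, leaving
\begin{equation*}
    \tfrac{1}{2}\norm{\thisu-\realoptu}_{M_{i+1}}^2 - \tfrac{1}{2}\norm{\nextu-\realoptu}_{M_{i+1}}^2
    \ge \tfrac{1}{2}\norm{\nextu-\realoptu}_{M_{i+2}-M_{i+1}}^2 - \Penalty_{i+1}(\realoptu).
\end{equation*}
Moving the $M_{i+1}$-norm of $\nextu-\realoptu$ to the right and combining it with the $(M_{i+2}-M_{i+1})$-norm produces a clean $M_{i+2}$-norm, which is exactly \eqref{eq:quantitative-fejer-pre}.

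Finally, I would obtain \eqref{eq:convergence-result-main-h} by induction on $N$: summing \eqref{eq:quantitative-fejer-pre} over $i = 0, 1, \dots, N-1$ telescopes the squared-norm terms, leaving $\tfrac{1}{2}\norm{u^N - \realoptu}_{M_{N+1}}^2$ on the left and $\tfrac{1}{2}\norm{u^0-\realoptu}_{M_1}^2 + \sum_{i=0}^{N-1} \Penalty_{i+1}(\realoptu)$ on the right.

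The only subtle step is the first one: \eqref{eq:ppext} is a set-valued inclusion, so the equality $\iprod{\tilde H_{i+1}(\nextu)}{\freevar}_{\Test_{i+1}} = -\iprod{\Precond_{i+1}(\nextu-\thisu)}{\freevar}_{\Test_{i+1}}$ must be read as holding for the specific selector of $\tilde H_{i+1}(\nextu)$ furnished by the iteration, and this is precisely the selector that must be assumed to satisfy \eqref{eq:convergence-fundamental-condition-iter-h}. Beyond this bookkeeping, everything is a direct algebraic manipulation resting on the self-adjointness hypothesis on $\Test_{i+1}\Precond_{i+1}$.
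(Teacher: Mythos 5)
Your proof is correct and follows essentially the same path as the paper's: substitute the selector from \eqref{eq:ppext} into \eqref{eq:convergence-fundamental-condition-iter-h}, apply the self-adjoint three-point identity with $M=\Test_{i+1}\Precond_{i+1}$, rearrange into \eqref{eq:quantitative-fejer-pre}, and telescope to \eqref{eq:convergence-result-main-h}. The only cosmetic difference is that you state the three-point identity in its symmetric ``polarisation'' form before specialising $a,b,c$, whereas the paper writes it directly in the specialised form \eqref{eq:standard-identity}; the algebra is identical.
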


\cbstart
The main condition \eqref{eq:convergence-fundamental-condition-iter-h} of \cref{thm:convergence-result-main-h} essentially writes in abstract and step-dependent form the three-point formulas that hold for convex smooth functions (see \cref{sec:three-point}). The term $\frac{1}{2}\norm{\nextu-\realoptu}_{\Test_{i+2}\Precond_{i+2}-\Test_{i+1}\Precond_{i+1}}^2$ is able to measure the strong monotonicity of $H$ or the approximation $\tilde H_{i+1}$. Indeed, if we have the estimate
\[
    \iprod{\tilde H_{i+1}(\nextu)}{\nextu-\realoptu}_{\Test_{i+1}} \ge \frac{1}{2}\norm{\nextu-\realoptu}_{\Test_{i+1}\Gamma}^2,
\]
then this suggests to update the local metrics as
\[
    \Test_{i+2}\Precond_{i+2} \simeq \Test_{i+1}(\Precond_{i+1}+\Gamma),
\]
where we write $\simeq$ to indicate that only the norm induced by the two operators has to be the same: $\Test_{i+1}\Gamma$ might not be self-adjoint, while $\Test_{i+2}\Precond_{i+2}$ has to be self-adjoint. As we will see in \cref{sec:pd-example}, these metric update and self-adjointness conditions effectively give popular primal--dual optimisation methods their necessary forms.
The term $\frac{1}{2}\norm{\nextu-\thisu}_{\Test_{i+1} \Precond_{i+1}}^2$, on the other hand, as we shall see in more detail in \cref{sec:examples-basic}, gives the necessary leeway for taking a forward step instead of a proximal step with respect to some components of $H$. The term $\Delta_{i+1}$ can model function value differences or duality gaps, as will be the case in this work, but in other contexts, such as the stochastic methods of our companion paper \cite{tuomov-blockcp}, it will be a penalty for the dissatisfaction of the metric update; hence the negated sign and the right-hand position in \eqref{eq:convergence-result-main-h}.

Specialised to \eqref{eq:pp}, we obtain the following result. The condition \eqref{eq:convergence-fundamental-condition-iter-h-alt} is often more practical to verify than \eqref{eq:convergence-fundamental-condition-iter-h} thanks to the additional structure introduced by $H(\realoptu) \ni 0$. Indeed, in many of our examples, we can eliminate $H$ through monotonicity. To derive gap and function value estimates in \cref{sec:gap}, we will however need \eqref{eq:convergence-fundamental-condition-iter-h}.
\cbend

\begin{corollary}
    \label{cor:convergence-result-main-h-alt}
    On a Hilbert space $U$, let $H: \Space \setto \Space$. Also let $\Test_{i+1}, \Step_{i+1}, \Precond_{i+1} \in \linear(\Space; \Space)$, and $\AltBregFn_{i+1}: \Space \to \Space$ for $i \in \N$.	
    Suppose \eqref{eq:pp} is solvable for $\{\nextu\}_{i \in \N} \subset \Space$ with $\BregFn_{i+1}$ as in \eqref{eq:bregfn-split}.
    Let $\realoptu \in \inv H(0)$.
    \cbstart%
    If for all $i \in \N$, $\Test_{i+1}\Precond_{i+1}$ is self-adjoint, and  for some $\Penalty_{i+1} \in \R$ and $\realoptu \in \Space$ the condition
    \begin{equation}
        \label{eq:convergence-fundamental-condition-iter-h-alt}
        \tag{CI}
        \begin{split}
        \iprod{\Step_{i+1}[H(\nextu)-H(\realoptu)] + \AltBregFn_{i+1}(\nextu)}{\nextu-\realoptu}_{\Test_{i+1}}
        &
        \ge
        \frac{1}{2}\norm{\nextu-\realoptu}_{\Test_{i+2}\Precond_{i+2}-\Test_{i+1}\Precond_{i+1}}^2
        \\ \MoveEqLeft[-1]
        -\frac{1}{2}\norm{\nextu-\thisu}_{\Test_{i+1} \Precond_{i+1}}^2
        - \Penalty_{i+1}(\realoptu),
        \end{split}
    \end{equation}
    holds, then \eqref{eq:convergence-fundamental-condition-iter-h}, \eqref{eq:quantitative-fejer-pre}, and \eqref{eq:convergence-result-main-h} hold for $\tilde H_{i+1}(u) \defeq \Step_{i+1} H(u)+\AltBregFn_{i+1}(u)$.
    \cbend%
\end{corollary}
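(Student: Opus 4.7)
The plan is to reduce \cref{cor:convergence-result-main-h-alt} to an immediate application of \cref{thm:convergence-result-main-h} via the substitution $\tilde H_{i+1}(u) \defeq \Step_{i+1} H(u) + \AltBregFn_{i+1}(u)$.

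First, I would verify that the hypotheses of the theorem transfer. Using the splitting \eqref{eq:bregfn-split}, the iteration \eqref{eq:pp} reads $0 \in \Step_{i+1} H(\nextu) + \AltBregFn_{i+1}(\nextu) + \Precond_{i+1}(\nextu-\thisu)$, which is precisely $0 \in \tilde H_{i+1}(\nextu) + \Precond_{i+1}(\nextu-\thisu)$, i.e., instance \eqref{eq:ppext} of the abstract iteration. Hence the assumed solvability of \eqref{eq:pp} for $\{\nextu\}_{i \in \N}$ transfers verbatim to solvability of \eqref{eq:ppext}. The self-adjointness of $\Test_{i+1}\Precond_{i+1}$ is shared between the two statements.

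Next I would match the fundamental conditions. Since $\realoptu \in \inv H(0)$, we may select the element $0 \in H(\realoptu)$ in the set-valued expression on the left of \eqref{eq:convergence-fundamental-condition-iter-h-alt}. Picking on the other side the element $h_{i+1} \in H(\nextu)$ that realises the inclusion \eqref{eq:pp} (so that $\Step_{i+1} h_{i+1} + \AltBregFn_{i+1}(\nextu) + \Precond_{i+1}(\nextu-\thisu)=0$), the bracket collapses to
\[
    \iprod{\Step_{i+1}[H(\nextu)-H(\realoptu)] + \AltBregFn_{i+1}(\nextu)}{\nextu-\realoptu}_{\Test_{i+1}}
    = \iprod{\tilde H_{i+1}(\nextu)}{\nextu-\realoptu}_{\Test_{i+1}},
\]
where on the right we use the same selection $\Step_{i+1} h_{i+1} + \AltBregFn_{i+1}(\nextu) \in \tilde H_{i+1}(\nextu)$. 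Therefore the assumed \eqref{eq:convergence-fundamental-condition-iter-h-alt} is exactly \eqref{eq:convergence-fundamental-condition-iter-h} for $\tilde H_{i+1}$ with the same penalty $\Penalty_{i+1}(\realoptu)$ and the same right-hand side.

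With both solvability of \eqref{eq:ppext} and condition \eqref{eq:convergence-fundamental-condition-iter-h} in hand, I would invoke \cref{thm:convergence-result-main-h} directly to obtain \eqref{eq:quantitative-fejer-pre} and \eqref{eq:convergence-result-main-h}, completing the proof. There is no real obstacle here; the only point requiring a little care is the consistent element selection in the set-valued brackets, which is unambiguous as soon as one fixes $h_{i+1} \in H(\nextu)$ from the inclusion defining $\nextu$ and $0 \in H(\realoptu)$ from the optimality of $\realoptu$.
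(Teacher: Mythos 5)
The proposal is correct and coincides with the intended (implicit) argument: the paper does not print a proof of this corollary, treating it as immediate, and your reduction via the substitution $\tilde H_{i+1}(u) \defeq \Step_{i+1}H(u)+\AltBregFn_{i+1}(u)$, the identification of \eqref{eq:pp} with \eqref{eq:ppext} under \eqref{eq:bregfn-split}, and the selection $0 \in H(\realoptu)$ to turn \eqref{eq:convergence-fundamental-condition-iter-h-alt} into \eqref{eq:convergence-fundamental-condition-iter-h} is exactly the content needed. Your remark about fixing the element $h_{i+1}\in H(\nextu)$ realising the inclusion is the right way to make the set-valued bookkeeping rigorous.
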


\begin{proof}[Proof of \cref{thm:convergence-result-main-h}]
	Inserting \eqref{eq:ppext} into \eqref{eq:convergence-fundamental-condition-iter-h}, we obtain
	\begin{multline}
		\label{eq:convergence-fundamental-condition-iter-h-transformed}	
		\frac{1}{2}\norm{\nextu-\thisu}_{\Test_{i+1} \Precond_{i+1}}^2
		+ \frac{1}{2}\norm{\nextu-\realoptu}_{\Test_{i+1}\Precond_{i+1}-\Test_{i+2}\Precond_{i+2}}^2
		\\
		- \iprod{\nextu-\thisu}{\nextu-\realoptu}_{\Test_{i+1}\Precond_{i+1}}
		\ge - \Penalty_{i+1}(\realoptu).
	\end{multline}
	We recall for general self-adjoint $M$ the three-point formula
	\begin{equation}
	   \label{eq:standard-identity}
	   \iprod{\nextu-\thisu}{\nextu-\realoptu}_{M}
	   = \frac{1}{2}\norm{\nextu-\thisu}_{M}^2
	       - \frac{1}{2}\norm{\thisu-\realoptu}_{M}^2
	       + \frac{1}{2}\norm{\nextu-\realoptu}_{M}^2.
	\end{equation}	
	Using this  with $M=\Test_{i+1}\Precond_{i+1}$, we rewrite \eqref{eq:convergence-fundamental-condition-iter-h-transformed} as the quantitative $\Delta$-Féjer monotonicity \eqref{eq:quantitative-fejer-pre}.
    Summing this over $i=0,\ldots,N-1$, we obtain the descent inequality \eqref{eq:convergence-result-main-h}.
\end{proof}

\begin{remark}[Bregman divergences and Banach spaces]
    \def\dualprod#1#2{\langle#1\,|\,#2\rangle}
    \cbstart%
    Let $X$ be a Banach space and $J \in \convex(X)$. Then for $x \in \Dom J$ and $p \in \subdiff J(x)$ 
    one can define the asymmetric \term{Bregman divergence} (or distance)
    \[
        D_J^{p}(z, x) \defeq J(z)-J(x)-\dualprod{p}{z-x}_X,
        \quad (x \in X),
    \]
    where $\dualprod{\freevar}{\freevar}_X: X^* \times X \to \R$ denotes the dual product.
    This is non-negative, but not a true distance, as it can happen that $D_J^{p}(z, x)=0$ for $z \ne x$. However with $\realoptx,z \in \Dom J$ and $q \in \subdiff J(z)$, we deduce \cite{chen1993convergence}
    \[
        \begin{split}
        D_J^{p}(\realoptx, x)  - D_J^{q}(\realoptx, z) + D_J^q(x,z)
        &
        =
        [J(\realoptx)-J(x)-\dualprod{p}{\realoptx-x}_X]
        -
        [J(\realoptx)-J(z)-\dualprod{q}{\realoptx-z}_X]
        \\
        \MoveEqLeft[-1]
        +
        [J(x)-J(z)-\dualprod{q}{x-z}_X]
        \\
        & =
        \dualprod{p-q}{x-\realoptx}_X.
        \end{split}
    \]
    Therefore, the Bergman distance satisfies an analogue of the standard three-point identity \eqref{eq:standard-identity}. It allows generalising our techniques to Banach spaces and the algorithm
    \[
        0 \in \Test_{i+1}\tilde H_{i+1}(\nextu) + (\nexxt{p} - \this{q})
        \quad\text{with}\quad
        \this{q} \in \subdiff J_{i+1}(\thisu)
        \ \text{and}\ 
        \nexxt{p} \in \subdiff J_{i+1}(\nextu)
    \]
    where for each $i \in \N$ now $\Test_{i+1}\Precond_{i+1}$ has been replaced by $J_{i+1} \in \convex(X)$.
    The convergence will, however, be with respect to $D_{J_{i+1}}$.
    Indeed, if $X$ is, in fact, a Hilbert space and we take $J_{i+1}(x)=\frac{1}{2}\norm{x}_{\Test_{i+1}\Precond_{i+1}}^2$, then $D_{J_{i+1}}^{x-z}(z, x)=\frac{1}{2}\norm{z-x}_{\Test_{i+1}\Precond_{i+1}}^2$.

    Proximal point methods based on general Bregman divergences in place of the squared norm are studied in, e.g., \cite{censor1992proximal,chen1993convergence,hohage2014generalization,hua2016block}.
    \cbend%
    %
\end{remark}

The next two results demonstrate how the estimate of \cref{thm:convergence-result-main-h} can be used to prove convergence with or without rates.

\begin{proposition}[Convergence with a rate]
    \label{prop:rate}
    Suppose the descent inequality \eqref{eq:convergence-result-main-h} holds with $\Penalty_{i+1}(\realoptu) \le 0$, and that $\Test_{N+1} \Precond_{N+1} \ge \mu(N) I$ for all $N \ge 1$. Then $\norm{u^N - \realoptu}^2 \to 0$ at the rate $O(1/\mu(N))$.
\end{proposition}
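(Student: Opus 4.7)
The plan is to chain the descent inequality with the spectral lower bound, with essentially no additional work. First, since by hypothesis $\Penalty_{i+1}(\realoptu) \le 0$ for every $i$, the sum $\sum_{i=0}^{N-1} \Penalty_{i+1}(\realoptu)$ in the right-hand side of \eqref{eq:convergence-result-main-h} is non-positive and may be discarded, yielding
\[
    \tfrac{1}{2}\norm{u^N-\realoptu}^2_{\Test_{N+1}\Precond_{N+1}}
    \le
    \tfrac{1}{2}\norm{u^0-\realoptu}^2_{\Test_{1}\Precond_{1}}.
\]
Next, since $\Test_{i+1}\Precond_{i+1}$ is self-adjoint by the standing assumption of \cref{thm:convergence-result-main-h} underlying \eqref{eq:convergence-result-main-h}, the expression $\norm{\,\cdot\,}^2_{\Test_{N+1}\Precond_{N+1}}$ is a genuine quadratic form, and the operator inequality $\Test_{N+1}\Precond_{N+1} \ge \mu(N) I$ gives $\tfrac{\mu(N)}{2}\norm{u^N-\realoptu}^2 \le \tfrac{1}{2}\norm{u^N-\realoptu}^2_{\Test_{N+1}\Precond_{N+1}}$.

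Combining these two estimates and dividing by $\mu(N)/2$, I get
\[
    \norm{u^N-\realoptu}^2 \le \frac{1}{\mu(N)}\norm{u^0-\realoptu}^2_{\Test_{1}\Precond_{1}},
\]
which is the claimed $O(1/\mu(N))$ rate. No genuine obstacle arises: the only thing to check is the self-adjointness of $\Test_{N+1}\Precond_{N+1}$, which legitimises the comparison with the Euclidean norm and is already among the hypotheses making \eqref{eq:convergence-result-main-h} available.
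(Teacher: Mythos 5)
Your proof is correct and takes exactly the route intended by the paper, which simply declares the result ``Immediate from \eqref{eq:convergence-result-main-h}''; you have spelled out the two steps (dropping the non-positive penalty sum and applying the spectral lower bound to the self-adjoint metric) that the paper leaves implicit.
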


\begin{proof}
    Immediate from \eqref{eq:convergence-result-main-h}.
\end{proof}

\cbstart

We can also obtain superlinear convergence from \eqref{eq:quantitative-fejer-pre}, a form of quantitative Féjer monotonicity when $\Penalty_{i+1}(\realoptu) \le 0$.

\begin{proposition}[Superlinear convergence]
    \label{prop:superlinear}
    Suppose \eqref{eq:quantitative-fejer-pre} holds with $\Penalty_{i+1}(\realoptu) \le 0$, and that $\Test_{i+1} \Precond_{i+1} = \tauTest_i I$ for some $\tauTest_i$ for all $i \in \N$. If $\tauTest_i/\tauTest_{i+1} \to 0$, then $u^N \to \realoptu$ superlinearly.
\end{proposition}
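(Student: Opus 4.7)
The plan is to reduce \eqref{eq:quantitative-fejer-pre} to a scalar recursion on $\norm{\thisu-\realoptu}^2$ and read off the superlinear rate from the assumed decay of the ratio $\tauTest_i/\tauTest_{i+1}$.

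First I would substitute $\Test_{i+1}\Precond_{i+1} = \tauTest_i I$ and $\Test_{i+2}\Precond_{i+2} = \tauTest_{i+1} I$ into the two weighted norms appearing in \eqref{eq:quantitative-fejer-pre}, and then use $\Penalty_{i+1}(\realoptu) \le 0$ to discard the penalty term. This yields the one-step contraction-type estimate
\begin{equation*}
    \tauTest_{i+1}\norm{\nextu-\realoptu}^2 \le \tauTest_i\norm{\thisu-\realoptu}^2.
\end{equation*}

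Next, if $\thisu = \realoptu$ for some $i$, then the inequality forces $\nextu = \realoptu$ (using positivity of $\tauTest_{i+1}$, which is implicit in its role as a local-metric weight), and the claim is immediate from that point on. Otherwise, dividing by $\tauTest_{i+1}\norm{\thisu-\realoptu}^2$ and taking square roots gives $\norm{\nextu-\realoptu}/\norm{\thisu-\realoptu} \le \sqrt{\tauTest_i/\tauTest_{i+1}}$, and the right-hand side tends to $0$ by hypothesis. Since the $Q$-ratio of successive errors vanishes, $u^N$ converges superlinearly to $\realoptu$ by definition.

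The only subtlety worth flagging is the passage from squared distances to distances: the hypothesis is phrased on $\tauTest_i/\tauTest_{i+1}$, which bounds the ratio of squared errors, so superlinearity of $\norm{u^N-\realoptu}$ itself requires the square-root step above. Beyond this, the proof is a one-line substitution into \eqref{eq:quantitative-fejer-pre} followed by a division, so no genuine obstacle is expected; the substance of the result lies entirely in the hypothesis \eqref{eq:quantitative-fejer-pre} delivered by \cref{thm:convergence-result-main-h}.
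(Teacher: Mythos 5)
Your proof is correct and is exactly the argument the paper compresses into ``Immediate from \eqref{eq:quantitative-fejer-pre}'': substitute the scalar form of the metric, drop the non-positive penalty to get $\tauTest_{i+1}\norm{\nextu-\realoptu}^2 \le \tauTest_i\norm{\thisu-\realoptu}^2$, and divide. Your flagged caveats (positivity of $\tauTest_{i+1}$ needed for the division, and the square root translating the ratio of squared errors into a $Q$-ratio) are precisely the details left implicit in the paper.
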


\begin{proof}
    Immediate from \eqref{eq:quantitative-fejer-pre}.
\end{proof}

The scalar $\tauTest_N$ has its index off-by-one intentionally; the reason will become more apparent once we get to primal--dual methods. It is also possible to obtain superlinear convergences of different orders $q>1$ from \eqref{eq:convergence-result-main-h} or \eqref{eq:quantitative-fejer-pre}. However, the conventional notions $\norm{\nextu-\realoptu}/\norm{\thisu-\realoptu}^q \to c \in \R$ cannot be characterised without involving the iterates.
Indeed, assuming $\tauTest_{i+1} \ge C/\norm{\thisx-\realoptx}^{2q}$, eqref{eq:convergence-result-main-h} characterises superlinear convergence of order $q$.
It would also be possible to introduce new notions of the order of superlinear convergence, not involving the iterates and more in spirit with the testing approach, such as $\tauTest_i^q/\tauTest_{i+1} \to c$, if such a notion would turn out to be useful.

To obtain weak convergence, we do not need $\Test_{i+1} \Precond_{i+1}$ to grow, but we need some additional technical assumptions. First of all, some of the leeway that the fundamental condition \eqref{eq:convergence-fundamental-condition-iter-h} included for the forward steps, is now required to obtain convergence. Secondly, we need some weak-to-strong outer semicontinuity from $H$, which we write more abstractly in terms of $\tilde H_{i+1}$.
It would be possible to improve this requirement based on the Brezis--Crandall--Pazy property \cite{bresiz1970perturbations}.
\cbend

\begin{proposition}[Weak convergence]
    \label{prop:rateless}
    Suppose for all $i \in \N$ that $\Test_{i}\Precond_{i} = \Test_0\Precond_0 \ge 0$ is self-adjoint, and that the iterates of the preconditioned proximal point method \eqref{eq:ppext} satisfy the fundamental condition \eqref{eq:convergence-fundamental-condition-iter-h} with $\Penalty_{i+1}(\realoptu) \le -\frac{\delta}{2}\norm{\nextu-\thisu}_{\Test_{i+1}\Precond_{i+1}}^2$ for all \cbstart  $\realoptu \in \inv H(0)$ and some $\delta > 0$. 
    Suppose either that $\Test_0\Precond_0$ has a bounded inverse, or that $\inv{(\Test_0 H + \Test_0\Precond_0)} \circ \Test_0\Precond_0$ is bounded on bounded sets.
    If $H$ is strong-to-strong outer semicontinuous and
    \begin{equation}
        \label{eq:precond-continuity}
        \nexxt{w} \defeq - \Test_{0}\Precond_{0}(\nextu-\thisu) \to 0,\ 
        w^{i_k} \in  \Test_0 \tilde H_{i_k}(u^{i_k}),\ 
        u^{i_k} \weakto \optu 
        \implies
        0 \in H(\optu),
    \end{equation}
    then $\Test_0\Precond_0(\thisu-\realoptu) \weakto 0$ weakly in $\Space$ for some $\realoptu \in \inv H(0)$.
    \cbend%
\end{proposition}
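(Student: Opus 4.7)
The plan is to combine the quantitative Féjer monotonicity \eqref{eq:quantitative-fejer-pre} with an Opial-style uniqueness-of-weak-limit argument, using \eqref{eq:precond-continuity} as the bridge to identify the weak limit as a root of $H$.

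First I would apply \cref{thm:convergence-result-main-h}. Since $\Test_{i+1}\Precond_{i+1}=\Test_0\Precond_0$ is constant and $\Penalty_{i+1}(\realoptu)\le -\tfrac{\delta}{2}\norm{\nextu-\thisu}_{\Test_0\Precond_0}^2$, the inequality \eqref{eq:quantitative-fejer-pre} becomes
\[
    \tfrac{1}{2}\norm{\nextu-\realoptu}_{\Test_0\Precond_0}^2
    + \tfrac{\delta}{2}\norm{\nextu-\thisu}_{\Test_0\Precond_0}^2
    \le \tfrac{1}{2}\norm{\thisu-\realoptu}_{\Test_0\Precond_0}^2,
\]
valid for every $\realoptu\in\inv H(0)$. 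Telescoping yields monotone non-increase of $\norm{\thisu-\realoptu}_{\Test_0\Precond_0}$ together with $\sum_i\norm{\nextu-\thisu}_{\Test_0\Precond_0}^2<\infty$. For self-adjoint positive semidefinite $A=\Test_0\Precond_0$ one has $\norm{Au}\le\norm{A}^{1/2}\norm{u}_A$, hence
$\nexxt{w}=-\Test_0\Precond_0(\nextu-\thisu)\to 0$ strongly.

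Next I would extract a weakly convergent subsequence of $\{u^i\}$. Under the first alternative, $\Test_0\Precond_0$ bounded below implies $\norm{\thisu-\realoptu}\le c^{-1/2}\norm{\thisu-\realoptu}_{\Test_0\Precond_0}$, so $\{u^i\}$ is bounded. Under the second alternative, $\Test_0\Precond_0\thisu$ is bounded (again by telescoping of $\nexxt{w}$), and the iteration \eqref{eq:ppext} multiplied by $\Test_0$ gives $\nextu\in\inv{(\Test_0\tilde H_{i+1}+\Test_0\Precond_0)}(\Test_0\Precond_0\thisu)$, so the boundedness hypothesis transfers boundedness to the iterates. Either way, we pick $u^{i_k}\weakto\optu$. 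Since by \eqref{eq:ppext} also $w^{i_k}\in\Test_0\tilde H_{i_k}(u^{i_k})$, the continuity assumption \eqref{eq:precond-continuity} forces $0\in H(\optu)$; denote one such root by $\realoptu$.

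The final step is an Opial-type argument to upgrade subsequential weak convergence to full weak convergence of $\Test_0\Precond_0\thisu$. Because the monotonicity inequality holds for \emph{every} root, $\norm{\thisu-\optu_j}_{\Test_0\Precond_0}^2$ converges for every pair of subsequential weak limits $\optu_1,\optu_2\in\inv H(0)$. Expanding the difference
\[
    \norm{\thisu-\optu_1}_{\Test_0\Precond_0}^2-\norm{\thisu-\optu_2}_{\Test_0\Precond_0}^2
    =\norm{\optu_1}_{\Test_0\Precond_0}^2-\norm{\optu_2}_{\Test_0\Precond_0}^2
        -2\iprod{\Test_0\Precond_0(\optu_1-\optu_2)}{\thisu}
\]
and taking limits along the two defining subsequences shows that $\iprod{\Test_0\Precond_0(\optu_1-\optu_2)}{\optu_1}=\iprod{\Test_0\Precond_0(\optu_1-\optu_2)}{\optu_2}$, i.e.\ $\norm{\optu_1-\optu_2}_{\Test_0\Precond_0}^2=0$, so $\Test_0\Precond_0\optu_1=\Test_0\Precond_0\optu_2$. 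Hence $\Test_0\Precond_0\thisu$ possesses a unique weak accumulation point $\Test_0\Precond_0\realoptu$, which proves $\Test_0\Precond_0(\thisu-\realoptu)\weakto 0$.

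The main obstacle is the last step: the possible non-triviality of the kernel of $\Test_0\Precond_0$ prevents a standard Opial argument from producing weak convergence of the iterates themselves, and forces us to formulate everything in the semi-inner product, yielding only the stated weak convergence of $\Test_0\Precond_0(\thisu-\realoptu)$. A secondary technical point is ensuring the index alignment between $\nexxt{w}\in\Test_0\tilde H_{i+1}(\nextu)$ and the hypothesis \eqref{eq:precond-continuity}, which is immediate after renaming to $w^j\in\Test_0\tilde H_j(u^j)$.
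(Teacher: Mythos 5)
Your proof is correct, and it takes a genuinely different route from the paper's. The paper invokes \cref{lemma:opial} on the transformed sequence $\thisx=A^{1/2}\thisu$ with target set $\hat X = A^{1/2}\hat U$, where $\hat U \defeq \closure\conv\inv H(0)$ is introduced precisely because the lemma requires a closed, convex set; showing the Féjer inequality over the enlarged set $\hat U$ is where the strong-to-strong outer semicontinuity of $H$ enters. You instead carry out the Opial uniqueness-of-weak-accumulation-point argument directly and inline: Féjer monotonicity with respect to $\norm{\freevar}_A$ for every root, together with the expansion of $\norm{\thisu-\optu_1}_A^2-\norm{\thisu-\optu_2}_A^2$, forces $A\optu_1=A\optu_2$ for any two subsequential weak limits. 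This bypasses the closed-convex-hull detour altogether (and, incidentally, never actually uses the strong-to-strong outer semicontinuity hypothesis), and it avoids the side question of whether $A^{1/2}\hat U$ is itself closed. What the paper's route buys is modularity: it delegates the bookkeeping to an off-the-shelf lemma. What your route buys is a shorter, more self-contained argument working in the semi-norm $\norm{\freevar}_A$ from the start, which makes it transparent why only $A(\thisu-\realoptu)\weakto 0$, and not $\thisu\weakto\realoptu$, can be concluded when $A$ has a kernel.

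One small imprecision: under the second alternative you attribute boundedness of $A\thisu$ to ``telescoping of $\nexxt{w}$''; the correct reason (which you in fact already have) is the Féjer bound $\norm{\thisu-\realoptu}_A\le\norm{u^0-\realoptu}_A$ combined with $\norm{Av}\le\norm{A}^{1/2}\norm{v}_A$. Also, as in the paper itself, the passage from $\nextu\in\inv{(\Test_0\tilde H_{i+1}+\Test_0\Precond_0)}(\Test_0\Precond_0\thisu)$ to the stated hypothesis on $\inv{(\Test_0 H + \Test_0\Precond_0)}\circ\Test_0\Precond_0$ implicitly assumes these coincide; this is inherited from the statement and not a defect specific to your argument.
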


\cbstart
For the proof, we use the next lemma. Its earliest version is contained in the proof of \cite[Theorem 1]{opial1967weak}, but can be found more explicitly stated as \cite[Lemma 6]{browder1967convergence}. \cbend

\begin{lemma}
    \label{lemma:opial}
    On a Hilbert space $X$, let $\hat X \subset X$ be closed and convex, and $\{\thisx\}_{i \in \N} \subset X$. If the following conditions hold, then $\thisx \weakto x^*$ weakly in $X$ for some $x^* \in \hat X$:
    \begin{enumerate}[label=(\roman*)]
        \item\label{item:opial-non-increasing} $i \mapsto \norm{\thisx-x^*}$ is non-increasing for all $x^* \in \hat X$ \cbstart (Féjer monotonicity) \cbend.
        \item\label{item:opial-limit} All weak limit points of $\{\thisx\}_{i \in \N}$ belong to $\hat X$.
    \end{enumerate}
\end{lemma}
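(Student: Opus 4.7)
The plan is the classical argument of Opial: conditions~(i) and~(ii) together easily supply existence of a weak subsequential limit in $\hat X$, and the essential work is uniqueness of that limit.

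First I would fix any $x^* \in \hat X$ (nonempty, since otherwise the conclusion is vacuous). Applying~(i) gives $\norm{\thisx - x^*} \le \norm{x^0 - x^*}$ for every $i$, so $\{\thisx\}_{i \in \N}$ is norm-bounded in the Hilbert space $X$. Hence every subsequence admits a further weakly convergent sub-subsequence, and by~(ii) each such weak limit lies in $\hat X$. A standard subsequence argument (if $\thisx$ did not converge weakly to a prospective limit $x^*$, boundedness would extract a weakly convergent sub-subsequence with a different limit) then reduces the proof to showing that any two weak subsequential limits of $\{\thisx\}$ coincide.

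For this main step, suppose $x^{i_k} \weakto x_1^*$ and $x^{j_k} \weakto x_2^*$ with $x_1^*, x_2^* \in \hat X$. By~(i), the scalar sequences $\norm{\thisx - x_1^*}^2$ and $\norm{\thisx - x_2^*}^2$ are non-increasing and non-negative, so each converges in $\R$; let $L$ denote the limit of their difference. Expanding the norms on the left of
\[
    \norm{\thisx - x_1^*}^2 - \norm{\thisx - x_2^*}^2
    = 2\iprod{\thisx}{x_2^* - x_1^*} + \norm{x_1^*}^2 - \norm{x_2^*}^2,
\]
and passing to the limit along $i = i_k$ and then along $i = j_k$, produces
\[
    L = 2\iprod{x_1^*}{x_2^* - x_1^*} + \norm{x_1^*}^2 - \norm{x_2^*}^2
      = 2\iprod{x_2^*}{x_2^* - x_1^*} + \norm{x_1^*}^2 - \norm{x_2^*}^2.
\]
Subtracting these two expressions for $L$ collapses to $2\norm{x_2^* - x_1^*}^2 = 0$, so $x_1^* = x_2^*$.

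The main obstacle is precisely this uniqueness step, and the cancellation produced by the Opial-style identity above is exactly what makes it work. The closedness and convexity of $\hat X$ play no direct role in the argument itself; they are present in the hypotheses to make~(ii) plausible in typical applications, where one verifies it by passing an operator inclusion to the weak limit. Once uniqueness is in hand, every weak cluster point of the bounded sequence $\{\thisx\}$ equals a single $x^* \in \hat X$, so $\thisx \weakto x^*$ as claimed.
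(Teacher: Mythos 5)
Your proof is correct and is the standard Opial-type argument; the paper itself does not prove this lemma but instead cites Opial's Theorem~1 and Browder's Lemma~6, and the argument given there is exactly the one you reproduce: boundedness from Féjer monotonicity, weak sequential compactness, and uniqueness of weak cluster points via the norm-expansion identity. Your observation that closedness and convexity of $\hat X$ are not needed in the argument itself (only to make the hypotheses verifiable in applications) is accurate. One minor nit: if $\hat X=\emptyset$ the hypotheses can hold (e.g.\ for an unbounded sequence) while the conclusion fails, so the lemma implicitly presupposes $\hat X\neq\emptyset$ rather than being vacuously true in that case; in the paper's applications this is guaranteed by the standing assumption that $\inv H(0)$ is nonempty.
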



\begin{proof}[Proof of \cref{prop:rateless}]
    \cbstart%
    To use \cref{lemma:opial}, we need a closed and convex solution set. However, $\inv H(0)$ may generally be non-convex and not closed.
    Since $\Test_{i+1}\Precond_{i+1} = \Test_{i+2}\Precond_{i+2}$, using the strong-to-strong outer semicontinuity of $H$, it is easy to see that \eqref{eq:convergence-fundamental-condition-iter-h} holds for all $\realoptu \in \hat U \defeq \closure \conv \inv H(0)$. Consequently the descent inequality \eqref{eq:convergence-result-main-h} holds for all $\realoptu \in \hat U$.

    We apply \cref{thm:convergence-result-main-h} on any $\realoptu \in \hat U$.
    From the quantitative $\Delta$-Féjer monotonicity \eqref{eq:quantitative-fejer-pre}, since $\Penalty_{i+1}(\realoptu) \le -\frac{\delta}{2}\norm{\nextu-\thisu}_{\Test_{i+1}\Precond_{i+1}}^2$ and $\Test_{i+1}\Precond_{i+1} \equiv \Test_0\Precond_0 =: A$, we have
    \begin{equation}
        \label{eq:rateless:fejer}
        \frac{1}{2}\norm{\nextu-\realoptu}_{A}^2
        +
        \frac{\delta}{2}\norm{\nextu-\thisu}_{A}^2
        \le
        \frac{1}{2}\norm{\thisu-\realoptu}_{A}^2
    \end{equation}
    This implies the condition \cref{lemma:opial}\ref{item:opial-non-increasing} for the sequence $\{\thisx \defeq A^{1/2}\thisu\}_{i \in \N}$.

    Let then  $\nexxt w \defeq -A(\nextu-\thisu)$ as in \eqref{eq:precond-continuity}.
    From \eqref{eq:rateless:fejer}, we deduce that $\nexxt w \to 0$ as $i \to \infty$. By \eqref{eq:ppext} and \eqref{eq:precond-continuity}, any weak limit point $u^*$ of the sequence $\{\thisu\}_{i \in \N}$ then satisfies $u^* \in \inv H(0) \subset \hat U$.
    Let then $x^*$ be any weak limit point of $\{\thisx\}_{i \in \N}$.
    We need to show that $x^* \in \hat X \defeq A^{1/2} \hat U$.
    If $\Test_0\Precond_0=A$ has a bounded inverse, then this is clear as the weak convergence of $\{x^{i_k}\}$ implies the weak convergence of $\{u^{i_k}=A^{-1/2}x^{i_k}\}$.
    Otherwise, when $\inv{(\Test_0 H + \Test_0\Precond)_0} \circ \Test_0\Precond_0$ is bounded on bounded sets, since $\nextu \in \inv{(\Test_0 H + \Test_0\Precond_0)}(\Test_0\Precond_0 \thisu)=\inv{(H + A)}(A^{1/2}\thisx)$, we see that $\{\nextu\}_{i \in \N}$ is bounded. Hence a subsequence converges to some $u^* \in \inv H(0)$. But this implies that $x^*=A^{1/2}u^*$ as required.

    By \cref{lemma:opial} now $x^i \weakto x^* \in A^{1/2}\hat U$.
    This implies $\Test_0\Precond_0(\thisu-u^*) \weakto 0$ weakly for some $u^* \in \inv H(0)$.
    \cbend%
\end{proof}


\subsection{Examples of first-order methods}
\label{sec:examples-basic}

We now look at several concrete examples.

\cbstart

\begin{example}[The proximal point method]
	\label{example:prox}
    For all $i \in \N$, take $\Precond_i=I$, $\AltBregFn_i = 0$, and $\Step_{i+1}=\tau_i I$ for some $\tau_i > 0$.
	Then \eqref{eq:pp} is the standard proximal point method $\nextu \in \inv{(I+\tau_i H)}(\thisu)$. If the operator $H: \Space \setto \Space$ is maximal monotone, $\{\thisu\}_{i \in \N}$ converges weakly to some $\realoptu \in \inv H(0)$ for any starting point $u^0 \in \Space$.
\end{example}

\begin{demonstration}
    We take $\Test_{i+1}=\tauTest_i I$ for some $\tauTest_i>0$. Then the fundamental condition \eqref{eq:convergence-fundamental-condition-iter-h-alt} reads
    \begin{equation}
        \label{eq:ci-prox}
        \tauTest_i\tau_i\iprod{H(\nextu)-H(\realoptu)}{\nextu-\realoptu}
        \ge
        \frac{\tauTest_{i+1}-\tauTest_i}{2}\norm{\nextu-\realoptu}^2
        -\frac{\tauTest_i}{2}\norm{\nextu-\thisu}^2 - \Penalty_{i+1}(\realoptu).
    \end{equation}
    As long as $\tauTest_{i} \ge \tauTest_{i+1}$, the monotonicity of $H$ clearly proves \eqref{eq:ci-prox}, thus \eqref{eq:convergence-fundamental-condition-iter-h-alt}, with $\Penalty_{i+1}(\realoptu)=-\frac{\tauTest_i}{2}\norm{\nextu-\thisu}^2$.
    Using the maximal monotonicity, Minty's theorem guarantees the solvability of \eqref{eq:pp}. Thus the conditions of \cref{cor:convergence-result-main-h-alt} are satisfied.
    Maximal monotonicity also guarantees that $H$ is weak-to-strong outer semicontinuous; see \cref{lemma:maxmono-outersemi}. This establishes the iteration outer semicontinuity condition \eqref{eq:precond-continuity}.
    Taking $\tauTest_{i} \equiv \tauTest_0$ for constant $\tauTest_0>0$, so that $\Test_{i+1}\Precond_{i+1}=\Test_0\Precond_0=\tauTest_0 I$, it remains to refer to \cref{prop:rateless}.
\end{demonstration}

Suppose $H$ is strongly monotone, that is, for some $\gamma>0$ holds
\[
    \iprod{H(u)-H(u')}{u-u'} \ge \gamma\norm{u-u'}^2
    \quad (u, u' \in \Space).
\]
Then from \eqref{eq:ci-prox}, we immediately also derive convergence rates as follows.
Letting $\tauTest_i \upto \infty$ will obviously give the fastest convergence, however, the $O(1/N^2)$ step length rule will be useful later on with splitting methods, combining the simple proximal step with other algorithmic elements.

\begin{example}[Acceleration and linear convergence of the proximal point method]
    \label{example:prox-accel}
    Suppose $H$ is strongly monotone for some factor $\gamma>0$.
    If we choose $\tau_{i+1} \defeq \tau_i/\sqrt{1+2\gamma\tau_i}$, then the proximal point method satisfies $\norm{u^N-\realoptu}^2 \to 0$ at the rate $O(1/N^2)$.
    If we keep $\tau_i=\tau_0>0$ constant, we get linear convergence of the iterates.
    If $\tau_i \upto \infty$, we get superlinear convergence.
\end{example}

\begin{demonstration}
    Clearly \eqref{eq:ci-prox} holds with $\Penalty_{i+1}(\realoptu)=0$ provided we update
    \[
        \tauTest_{i+1} \defeq \tauTest_i(1+2\gamma\tau_i).
    \]
    Then \cref{thm:convergence-result-main-h} gives the descent inequality \eqref{eq:convergence-result-main-h}, which now reads
    \begin{equation*}
        \label{eq:di-prox}
        \frac{\tauTest_N}{2}\norm{u^N-\realoptu}^2
        \le
        \frac{\tauTest_0}{2}\norm{u^0-\realoptu}^2
        \quad
        (N \ge 1).
    \end{equation*}
    If we take $\tauTest_i=\tau_i^{-1/2}$, this reads $\tauTest_{i+1} \defeq \tauTest_i+2\gamma\tauTest_i^{-1/2}$. Since $\tauTest_{N}$ is of the order $\Theta(N^2)$ \cite{chambolle2010first,tuomov-cpaccel}, we get the claimed $O(1/N^2)$ convergence from \eqref{eq:di-prox}.
    If, on the other hand, we keep $\tau_i \equiv \tau_0$ fixed, then clearly $\tauTest_N=(1+2\gamma\tau_0)^N\tauTest_0$. Since this is exponential when $\gamma>0$, we get linear convergence from \eqref{eq:di-prox}.
    Finally, if $\tau_i \upto \infty$, we see from \eqref{eq:di-prox} that $\tauTest_i/\tauTest_{i+1} \downto 0$. We now obtain superlinear convergence from \cref{cor:convergence-result-main-h-alt,prop:superlinear}.
\end{demonstration}

The next lemma starts our analysis of gradient descent and forward--backward splitting.
It relies on the three-point smoothness inequalities of \cref{sec:three-point}, which the reader may want to study at this point.
\cbend

\begin{lemma}
    \label{lemma:graddesc}
    Let $H=\subdiff G+\grad J$ for $G,J \in \convex(X)$ such that $\grad J$ is $L$-Lipschitz. For all $i \in \N$, take $\Precond_{i+1} \equiv I$ and $\AltBregFn_{i+1}(u) \defeq \tau_i(\grad J(\thisu) - \grad J(u))$ with $\Step_{i+1}=\tau_i I$ as well as $\Test_{i+1} \equiv \tauTest_i I$ for some $\tau_i,\tauTest_i>0$.\cbstart
    Then the fundamental condition \eqref{eq:convergence-fundamental-condition-iter-h-alt} holds if
    \begin{enumerate}[label=(\roman*)] 
        \item\label{item:graddesc-unaccel} $\phi_i=\phi_0$ is constant, $\tau_i L < 2$, and $\Penalty_{i+1}(\realoptu) \defeq -\tauTest_i(1-\tau_i L/2)\norm{u-\thisu}^2/2$.
        In this case the iteration outer semicontinuity condition \eqref{eq:precond-continuity} moreover holds provided $\inf_i\tau_i >0$.
    \end{enumerate} \cbend
    If $J$ is strongly convex with factor $\gamma>0$, alternatively:
    \begin{enumerate}[label=(\roman*),resume]
        \item\label{item:graddesc-accel} 
            $\tau_0 L^2 < \gamma$, 
            $\tauTest_{i+1} \defeq \tauTest_i(1+\tau_i(2\gamma-\tau_iL^2))$, $\tau_i \defeq \tauTest_i^{-1/2}$ or $\tau_i \defeq \tau_0$, 
            and $\Penalty_{i+1}(\realoptu)=0$.
    \end{enumerate}
\end{lemma}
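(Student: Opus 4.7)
The plan is to substitute the prescribed operators into the fundamental condition \eqref{eq:convergence-fundamental-condition-iter-h-alt} and reduce the problem to a three-point smoothness estimate on $\nabla J$. The key algebraic cancellation is
\[
    \Step_{i+1}[H(\nextu)-H(\realoptu)] + \AltBregFn_{i+1}(\nextu)
    = \tau_i[\subdiff G(\nextu) - \subdiff G(\realoptu)] + \tau_i[\grad J(\thisu) - \grad J(\realoptu)],
\]
since the $\tau_i\grad J(\nextu)$ contributions from $\Step_{i+1}H(\nextu)$ and $\AltBregFn_{i+1}(\nextu)$ cancel. This is precisely the forward--backward structure: the smooth gradient ends up evaluated at $\thisu$ rather than at $\nextu$. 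Taking the selection of $\subdiff G(\nextu)$ determined by the iteration \eqref{eq:pp} and $-\grad J(\realoptu) \in \subdiff G(\realoptu)$, monotonicity of $\subdiff G$ kills the first bracket, and I am left to bound the scalar $\tauTest_i\tau_i\iprod{\grad J(\thisu) - \grad J(\realoptu)}{\nextu - \realoptu}$ from below.

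For part~\ref{item:graddesc-unaccel}, since $\Test_{i+1}\Precond_{i+1} \equiv \tauTest_0 I$, the metric-difference term on the RHS of \eqref{eq:convergence-fundamental-condition-iter-h-alt} vanishes. I would invoke from \cref{sec:three-point} the sharp three-point bound
\[
    \iprod{\grad J(\thisu) - \grad J(\realoptu)}{\nextu - \realoptu} \ge -\tfrac{L}{4}\norm{\nextu - \thisu}^2,
\]
obtained by splitting across $\thisu$, applying co-coercivity of $\grad J$ to the $\iprod{\,\cdot\,}{\thisu - \realoptu}$ piece, and Young's inequality with weight $L/2$ to the remainder so that the gradient-norm terms cancel. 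Multiplying by $\tauTest_0\tau_i$ and comparing with the RHS, the prescribed choice $\Delta_{i+1} = -\tauTest_0(1 - \tau_iL/2)\norm{\nextu - \thisu}^2/2$ closes the gap exactly; the hypothesis $\tau_iL<2$ then gives $\Delta_{i+1}\le 0$.

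For part~\ref{item:graddesc-accel}, I would instead split along $\nextu$, using $\gamma$-strong monotonicity of $\grad J$ on the piece $\iprod{\grad J(\nextu) - \grad J(\realoptu)}{\nextu - \realoptu}$ (producing $\gamma\norm{\nextu - \realoptu}^2$) and Young's inequality with weight $\alpha = \tau_iL^2$ together with the $L$-Lipschitz bound on $\norm{\grad J(\thisu)-\grad J(\nextu)}$ on the remainder. The two terms then match the RHS of \eqref{eq:convergence-fundamental-condition-iter-h-alt} exactly: the $\norm{\nextu - \thisu}^2$ coefficient becomes $\tauTest_i/2$, and the $\norm{\nextu - \realoptu}^2$ coefficient forces the update rule $\tauTest_{i+1} = \tauTest_i(1 + \tau_i(2\gamma - \tau_iL^2))$ with $\Delta_{i+1} = 0$. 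The assumption $\tau_0L^2 < \gamma$ ensures $2\gamma - \tau_iL^2 > 0$ at $i=0$; both admissible step rules ($\tau_i=\tauTest_i^{-1/2}$, with $\tauTest_i$ nondecreasing, or $\tau_i\equiv\tau_0$) preserve $\tau_i \le \tau_0$, so the positivity persists for all $i$.

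Finally, for the iteration outer semicontinuity \eqref{eq:precond-continuity} in case~\ref{item:graddesc-unaccel}: one has $\nexxt w \in \Test_0\tilde H_{i+1}(\nextu) = \tauTest_0\tau_i[\subdiff G(\nextu) + \grad J(\thisu)]$, and $\inf_i\tau_i>0$ combined with $\nexxt w\to 0$ forces $\nextu - \thisu \to 0$ in norm, hence $\grad J(\thisu) - \grad J(\nextu)\to 0$ strongly by Lipschitzness. Adjusting $\nexxt w/(\tauTest_0\tau_i)$ by this vanishing discrepancy exhibits a sequence in $H(\nextu) = \subdiff(G+J)(\nextu)$ converging strongly to $0$; demi-closedness of the maximal monotone operator $\subdiff(G+J)$ then yields $0 \in H(\optu)$. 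The main obstacle is pinning down the sharp three-point inequality for part~\ref{item:graddesc-unaccel}: a naive convexity-plus-descent argument only yields $-L/2\norm{\nextu-\thisu}^2$ (and would force $\tau_iL<1$), so one really needs the co-coercivity-plus-tuned-Young combination to reach the claimed $\tau_iL<2$ regime.
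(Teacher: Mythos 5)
Your proposal is correct and takes essentially the same route as the paper: the cancellation leaving $\grad J(\thisu)-\grad J(\realoptu)$, elimination of $\subdiff G$ by monotonicity, and the two three-point bounds you derive (split at $\thisu$ with co-coercivity plus Young for part~\ref{item:graddesc-unaccel}, split at $\nextu$ with strong monotonicity plus Young/Lipschitz for part~\ref{item:graddesc-accel}) are exactly the paper's \eqref{eq:three-point-hypomonotonicity} from \cref{lemma:smoothness} and \eqref{eq:three-point-hypomonotonicity-sc} from \cref{lemma:sc-smoothness}, which it applies as black boxes. Your outer-semicontinuity argument also fills in the step the paper merely labels ``immediate'', and your closing remark about the naive $-L/2$ bound correctly pinpoints why co-coercivity, not mere convexity-plus-descent, is needed to reach $\tau_iL<2$.
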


\begin{proof}
    \cbstart
    We expand the fundamental condition \eqref{eq:convergence-fundamental-condition-iter-h-alt} as
    \begin{equation*}
        \frac{\tauTest_i}{2}\norm{u-\thisu}^2
        + \frac{\tauTest_i-\tauTest_{i+1}}{2}\norm{u-\realoptu}^2
        +\tauTest_i\tau_i\iprod{H(\nextu)-H(\realoptu)}{u-\realoptu}
        \ge 0.
    \end{equation*}
    By the monotonicity of $\subdiff G$, this holds if
    \begin{equation}
        \label{eq:convergence-fundamental-condition-iter-h-graddesc}
        \frac{\tauTest_i}{2}\norm{u-\thisu}^2
        + \frac{\tauTest_i-\tauTest_{i+1}}{2}\norm{u-\realoptu}^2
        +\tauTest_i\tau_i\iprod{\grad J(\thisu)-\grad J(\realoptu)}{u-\realoptu}
        \ge 0.
    \end{equation}
    \ref{item:graddesc-unaccel}
    The three-point inequality \eqref{eq:three-point-hypomonotonicity} in \cref{lemma:smoothness} states
    \[
        \iprod{\grad J(\thisu)-  \grad J(\realoptu)}{u-\realoptu}
        \ge
        -\frac{L}{4}\norm{u-\thisu}^2.
    \]
    This clearly reduces \eqref{eq:convergence-fundamental-condition-iter-h-graddesc} to
    \[
        \frac{\tauTest_i-L\tau_i/2}{2}\norm{u-\thisu}^2
        + \frac{\tauTest_i-\tauTest_{i+1}}{2}\norm{u-\realoptu}^2
        \ge \Delta_{i+1}(\realoptu),
    \]
    which holds under the conditions of \ref{item:graddesc-unaccel}.
    The satisfaction of \eqref{eq:precond-continuity} is immediate from the weak-to-strong outer semicontinuity of $\subdiff F$ (\cref{lemma:maxmono-outersemi}), the Lipschitz continuity of $\grad G$, and the bounds on $\tau_i$.

    \ref{item:graddesc-accel}
    The three-point smoothness inequality \eqref{eq:three-point-hypomonotonicity-sc} in \cref{lemma:sc-smoothness} gives
    \[
        \iprod{\grad J(\thisu)-  \grad J(\realoptu)}{u-\realoptu}
        \ge
        \frac{2\gamma-\tau_i L^2}{2}\norm{u-\realoptu}^2
        -\frac{1}{2\tau_i}\norm{u-\thisu}^2.
    \]\cbend
    Inserting this into \eqref{eq:convergence-fundamental-condition-iter-h-graddesc}, we see it to hold with $\Delta_{i+1}(\realoptu)=0$ if
    \begin{equation}
        \label{eq:graddesc-accel-cond}
        \tauTest_i+\tauTest_i\tau_i(2\gamma-\tau_iL^2) \ge \tauTest_{i+1}.
    \end{equation}
    Clearly our two alternative choices of $\{\tau_i\}_{i \in \N}$ are non-increasing.
    Therefore, \eqref{eq:graddesc-accel-cond} follows from the initialisation condition $\tau_0 L^2 < \gamma$ and the update rule $\tauTest_{i+1} \defeq \tauTest_i+\tauTest_i\tau_i(\gamma-\tau_iL^2)$ in \ref{item:graddesc-accel}.
\end{proof}

\cbstart
\begin{remark}
    It is also possible to exploit the strong convexity of $G$ instead of $J$ for acceleration.
\end{remark}
\cbend

\begin{example}[Gradient descent]
	\label{example:graddesc}
    \cbstart
    Let $H=\grad J$ for $J \in \convex(\Space)$ with $\grad J$ $L$-Lipschitz.
    \cbend
    Taking $\tau_i=\tau$ and $G=0$ constant in \cref{lemma:graddesc}, \eqref{eq:pp} reads
	\begin{equation*}
	    0 = \tau \grad J(\thisu) + \nextu - \thisu.
	\end{equation*}
	This is the gradient descent method.
    Direct application of \cref{lemma:graddesc}\ref{item:graddesc-unaccel} with $u=\nextu$ and $u^*=\realoptu$ together with \cref{cor:convergence-result-main-h-alt,prop:rateless} now verifies the well-known weak convergence of the method to a root $\realoptu$ of $H$ when $\tau L < 2$.

    Observe that $\BregFn_{i+1}=\grad Q_{i+1}$ for
    \[
    	Q_{i+1}(u) \defeq \frac{1}{2}\norm{u-\thisu}^2 + \tau\left[J(\thisu)+\iprod{\grad J(\thisu)}{u-\thisu}-J(u)\right].
    \]
    Each step of \eqref{eq:pp} therefore minimises the \emph{surrogate objective} \cite{daubechies2004surrogate}
    \begin{equation}
    	\label{eq:surrogate}
    	u \mapsto J(u) + \inv\tau Q_{i+1}(u).
    \end{equation}
 	The function $Q_{i+1}$ on one hand penalises long steps, and on the other hand 
    allows longer steps when the local linearisation error is large. 
    In this example, $Q_{i+1}$ is, in fact, a Bregman divergence.
\end{example}

\cbstart
Under strong convexity, we again get rates via \cref{lemma:graddesc}\ref{item:graddesc-accel}.
Minding our remarks before \cref{example:prox-accel}, we only state the case $\tau_i=\tau_0$.
Due to the upper bound $\tau_0 < \gamma/L^2$, we cannot get superlinear convergence as in \cref{example:prox-accel}.

\begin{example}[Acceleration and linear convergence of gradient descent]
    \label{example:graddesc-accel}
    Continuing from \cref{example:graddesc}, if $J$ is strongly convex with factor $\gamma>0$ and $\grad J$ is $L$-Lipschitz,
    and we keep $\tau_i=\tau_0 < \gamma/L^2$ fixed, we get linear convergence.
\end{example}
\cbend

Now comes the full power of \cref{lemma:graddesc}: we can easily bolt on a proximal step to gradient descent.

\begin{example}[Forward--backward splitting]
    \label{example:fb}
    Let $H=\subdiff G+\grad J$ for $G, J \in \convex(X)$ with $\grad J$ Lipschitz. Taking $\Precond_{i+1}$, $\Step_{i+1}$, and $\AltBregFn_{i+1}$ as in \cref{lemma:graddesc}, the preconditioned proximal point method \eqref{eq:pp} becomes
    \[
        0 \in \tau_i \subdiff G(\nextu) + \tau_i \grad J(\thisu) + \nextu-\thisu.
    \]
    This is the forward--backward splitting method
    \[
        \nextu \defeq \inv{(I+\tau_i \subdiff G)}(\thisu-\tau_i \grad J(\thisu)).
    \]
    By \cref{lemma:graddesc}, convergence and acceleration work exactly as for gradient descent in \cref{example:graddesc,example:graddesc-accel}.
\end{example}

\cbstart
We can also do fully non-smooth splitting methods by a lifting approach:
\cbend

\begin{example}[Douglas--Rachford splitting]
	Let $A, B: \Space \setto \Space$ be maximal monotone operators. Consider the problem of finding $\realoptu$ with $0 \in A(\realoptu)+B(\realoptu)$.
	For $\lambda>0$, let
	\begin{align}
        \notag
		H(u, v) & \defeq
		\begin{pmatrix}
			\lambda B(u) + u - v
			\\
			\lambda A(u) + v - u
		\end{pmatrix},
        &
		\Precond_{i+1} &\defeq \begin{pmatrix}0 & 0 \\ 0 & I \end{pmatrix},
        \quad\text{and}\\
        \label{eq:dg-tildeh}
        \tilde H_{i+1}(u, v)
        &
        \defeq
        \begin{pmatrix}
            \lambda B(\nextu) + \nextu - \thisv
            \\
            \lambda A(\nextu+\nextv-\thisv) + \thisv-\nextu
        \end{pmatrix}.
    \end{align}
	Then $0 \in A(\realoptu)+B(\realoptu)$ if and only if $0 \in H(\realoptu, \realoptv)$, where $\realoptv \in (\realoptu-\lambda A(\realoptu))\isect(\realoptu+\lambda B(\realoptu))$.
    The preconditioned proximal point method \eqref{eq:ppext} becomes the Douglas--Rachford splitting \cite{douglas1956numerical}
    \begin{subequations}%
    \label{eq:drs}
	\begin{align}
	    \nextu & \defeq \inv{(I+\lambda B)}(\thisv), \\
	    \nextv & \defeq \thisv + \inv{(I+\lambda A)}(2\nextu-\thisv) - \nextu.
	\end{align}%
    \end{subequations}%
    We work with \eqref{eq:ppext} since in \eqref{eq:pp}, $\AltBregFn_{i+1}$ would have to be set-valued.	
    If $A$ and $B$ are maximal monotone, the variables $\{\thisv\}_{i \in \N}$ converge weakly to $\realoptv$. 
\end{example}

\begin{demonstration}
    Write $\this{\bar u} \defeq (\thisu, \thisv)$ and $\realopt{{\bar u}} \defeq (\realoptu, \realoptv)$. Observe that
	\[
		\nextu-\nextv =: \nexxt{q} \in \lambda A(\nextu-\nextv-\thisv)
		\quad\text{and}\quad
		\realoptu-\realoptv =: \realopt{q} \in \lambda A(\realoptu).
	\]
	Using the monotonicity of $A$ and $B$, with $\Test_{i+1} \defeq I$, we have
	\begin{equation*}
        \begin{split}
		\iprod{\tilde H_{i+1} & (\nexxt{\bar u})}{\Test_{i+1}^*(\nexxt{\bar u}-\realopt{{\bar u}})}
		\subset
		\iprod{\tilde H_{i+1}(\nexxt{\bar u})-H(\realopt{{\bar u}})}{\Test_{i+1}^*(\nexxt{\bar u}-\realopt{{\bar u}})}
		\\
        &
			=
			\lambda\iprod{B(\nextu)-B(\realoptu)}{\nextu-\realoptu}
			+\lambda\iprod{\nexxt{q}-\realopt{q}}{\nextv-\realoptv}
		\\
        & \phantom{ == }
            +\iprod{\nextu-\thisv}{(\nextu-\nextv)-(\realoptu-\realoptv)}
		\\
        & = \lambda\iprod{B(\nextu)-B(\realoptu)}{\nextu-\realoptu} +
			\lambda\iprod{\nexxt{q}-\realopt{q}}{\nextu+\nextv-\thisv-\realoptv}
		\ge 0.
        \end{split}
	\end{equation*}
	Thus the fundamental condition \eqref{eq:convergence-fundamental-condition-iter-h} holds with $\Penalty_{i+1}(\realopt{{\bar u}}) \defeq -\frac{1}{2}\norm{\nexxt{\bar u}-\this{\bar u}}_{\Test_{i+1}\Precond_{i+1}}^2$.  Using \eqref{eq:dg-tildeh} and the weak-to-strong outer semicontinuity of $A$ and $B$ (see \cref{lemma:maxmono-outersemi}), we easily verify \eqref{eq:precond-continuity}.
    \cbstart%
    Since $\Test_{i+1}\Precond_{i+1} \equiv \Test_0\Precond_0$ is non-invertible, we also have to verify that $\inv{(\Test_0H+\Test_0\Precond_0)} \circ \Test_0\Precond_0$ is bounded on bounded sets. This is to say that \eqref{eq:drs} bounds $\nexxt{\bar u}=(\nextu, \thisv)$ in terms of $\thisv$. This is an easy consequence of the Lipschitz-continuity of the resolvent of maximal monotone operators \cite[Corollary 23.10]{bauschke2017convex}.
    \cbend%
    Weak convergence now follows from \cref{thm:convergence-result-main-h,prop:rateless}.
\end{demonstration}

\subsection{Examples of second-order methods}
\label{sec:examples-second}

\cbstart
We now look at how are techniques are applicable to Newton's method.
Through the three-point inequalities of \cref{lemma:c2-smoothness} for $C^2$ functions, the analysis turn out to be very close to that of gradient descent.
Our analysis is not as short as the conventional analysis of Newton's method, but has its advantages. Indeed, the convergence of proximal Newton's method will be an automatic corollary of our approach, exactly how the convergence of forward--backward splitting was a corollary of the convergence of gradient descent.
\cbend

\begin{example}[Newton's method]
    \label{example:newton}
    Suppose $H=\grad J$ for $J \in C^2(\Space)$. 
    Take
    \[
        \BregFn_{i+1}(u) \defeq \grad^2 J(\thisu)(u-\thisu) + \grad J(\thisu) - \grad J(u),
        \quad\text{and}\quad
        \Step_{i+1} \defeq I
    \]
    Then the preconditioned proximal point method \eqref{eq:pp} reads
    \begin{equation*}
        0 = \grad J(\thisu) + \grad^2 J(\thisu)(\nextu - \thisu).
    \end{equation*}
    This is Newton's method.
    \cbstart
    By \cref{lemma:newton} (below) and \cref{prop:rate}, we obtain local linear convergence if $\grad^2 J(\realoptu)>0$.
    By \cref{lemma:newton-superlinear} (below), this convergence is, further, superlinear (quadratic if $\grad^2 J$ is locally Lipschitz near $\realoptx$).
    \cbend

    Observe that now $\BregFn_{i+1}(u)$ is the gradient of
    \[
        Q_{i+1}(u) \defeq J(\thisu)+\iprod{\grad J(\thisu)}{u-\thisu}+\frac{1}{2}\norm{u-\thisu}^2_{\grad^2 J(\thisu)}-J(u).
    \]
    In the surrogate objective \eqref{eq:surrogate}, this allows longer steps when the second-order Taylor expansion under-approximates, and forces shorter steps when it over-approximates.
\end{example}

\cbstart
Again, we can easily bolt on a proximal step:
\cbend

\begin{example}[Proximal Newton's method]
    Let $H=\subdiff G+\grad J$ for $J \in C^2(X)$, and $G \in \convex(X)$. Taking $\Precond_{i+1}$, $\Step_{i+1}$, and $\AltBregFn_{i+1}$ as in \cref{example:newton}, the preconditioned proximal point method \eqref{eq:pp} becomes
    \[
        0 \in \subdiff G(\nextu) + \grad J(\thisu) + \grad^2 J(\thisu)(\nextu-\thisu).
    \]
    This is the proximal Newton's method \cite[see, e.g.,][]{lee2014proximal}
    \[
        \nextu \defeq \inv{(I+\inv{[\grad^2 J(\thisu)]}\subdiff G)}(\thisu-\inv{[\grad^2 J(\thisu)]}\grad J(\thisu)),
    \]
    where $\inv{(I+\inv A \subdiff G)}(v)$ solves
    $
        \min_u \frac{1}{2}\norm{u-v}_A^2 + G(u).
    $
    Convergence and acceleration work exactly as for Newton's method in \cref{example:newton}, based on the same lemmas that we state next.
\end{example}

\begin{lemma}
    \label{lemma:newton}
    Let $H=\subdiff G+\grad J$ for $G \in \convex(\Space)$ and $J \in C^2(\Space)$. Take
    \[
        \BregFn_{i+1}(u) \defeq \grad^2 J(\thisu)(u-\thisu) + \grad J(\thisu) - \grad J(u),
        \quad\text{and}\quad
        \Step_{i+1} \defeq I
    \]
    \cbstart
    For an initial iterate $u^0 \in \Space$, let $\{\nextu\}_{i \in \N}$ be defined through \eqref{eq:pp}.
    \cbend
    If $\grad^2 J(\realoptu)>0$, there exists $\epsilon>0$ such that if $\norm{u^0-\realoptu}_{\grad^2 J(\realoptu)} \le \epsilon$, then the fundamental condition \eqref{eq:convergence-fundamental-condition-iter-h-alt} holds with $\Penalty_{i+1}(\realoptu) = 0$  and $\Precond_{i+1}=\grad^2 J(\thisu)$ for all $i \in \N$. Moreover, we can take $\Test_{i+1}=\tauTest_i I$ such that \cbstart $\Test_{N}\Precond_{N} \ge \kappa^{N} \grad^2 J(\realoptu)$ for some $\kappa>1$.
    In particular, $\norm{\thisu - \realoptu}^2 \to 0$ at the linear rate $O(1/\kappa^N)$.
\end{lemma}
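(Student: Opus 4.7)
My plan is to verify the fundamental condition \eqref{eq:convergence-fundamental-condition-iter-h-alt} directly, using the monotonicity of $\subdiff G$ to strip $G$ away, applying the $C^2$ three-point inequalities of \cref{lemma:c2-smoothness} to the remaining smooth terms, and controlling everything via the continuity of $\grad^2 J$ and the local positive definiteness at $\realoptu$.

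\textbf{Step 1 (reduction to the smooth part).} Substitute $\Step_{i+1}=I$, $\AltBregFn_{i+1}(u)=\grad J(\thisu)-\grad J(u)$, $\Precond_{i+1}=\grad^2 J(\thisu)$, and $\Test_{i+1}=\tauTest_i I$ into \eqref{eq:convergence-fundamental-condition-iter-h-alt}. The $\subdiff G$-contribution on the left is $\tauTest_i\iprod{\subdiff G(\nextu)-\subdiff G(\realoptu)}{\nextu-\realoptu} \ge 0$ by monotonicity and so may be discarded. What remains is an inequality whose left-hand side is $\tauTest_i\iprod{\grad J(\thisu)-\grad J(\realoptu)}{\nextu-\realoptu}$ and whose right-hand side involves only quadratic forms in $\grad^2 J(\thisu)$ and $\grad^2 J(\nextu)$, exactly the setting of the $C^2$ three-point formulae.

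\textbf{Step 2 ($C^2$ three-point bounds and local control).} Apply the relevant three-point inequalities of \cref{lemma:c2-smoothness} to bound $\iprod{\grad J(\thisu)-\grad J(\realoptu)}{\nextu-\realoptu}$ from below in terms of $\norm{\nextu-\realoptu}^2_{\grad^2 J(\realoptu)}$ minus error terms proportional to $\norm{\nextu-\thisu}^2$ and to the oscillation of $\grad^2 J$. Continuity of $\grad^2 J$ together with $\grad^2 J(\realoptu)>0$ yields, for any prescribed $\eta\in(0,1)$, a radius $r>0$ such that
\begin{equation*}
    (1-\eta)\grad^2 J(\realoptu) \le \grad^2 J(u) \le (1+\eta)\grad^2 J(\realoptu)
    \qquad\text{whenever } \norm{u-\realoptu}_{\grad^2 J(\realoptu)} \le r.
\end{equation*}
Within this ball the three quadratic forms $\norm{\,\cdot\,}^2_{\grad^2 J(\thisu)}$, $\norm{\,\cdot\,}^2_{\grad^2 J(\nextu)}$, $\norm{\,\cdot\,}^2_{\grad^2 J(\realoptu)}$ are all comparable up to factors $1\pm\eta$, and the three-point error terms can be absorbed.

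\textbf{Step 3 (inductive localisation and choice of $\tauTest_i$).} Choose $\kappa>1$ and $\eta>0$ small enough that within the $r$-ball the bound of Step~2 implies \eqref{eq:convergence-fundamental-condition-iter-h-alt} with $\Penalty_{i+1}(\realoptu)=0$ and the update $\tauTest_{i+1}=\kappa\tauTest_i$. Picking $\epsilon>0$ so that $\norm{u^0-\realoptu}_{\grad^2 J(\realoptu)}\le\epsilon$ guarantees $u^0$ lies in the $r$-ball, induct: assuming $u^0,\dots,\thisu$ lie in the ball, the previous step verifies \eqref{eq:convergence-fundamental-condition-iter-h-alt} at iteration $i$, so \cref{cor:convergence-result-main-h-alt} yields the descent inequality \eqref{eq:convergence-result-main-h}, and since $\tauTest_{i+1}\Precond_{i+1}\ge(1-\eta)\tauTest_{i+1}\grad^2 J(\realoptu)$, this forces $\nextu$ into an even smaller $\grad^2 J(\realoptu)$-ball, closing the induction.

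\textbf{Step 4 (rate).} From $\tauTest_N=\kappa^N\tauTest_0$ and $\grad^2 J(u^N)\ge(1-\eta)\grad^2 J(\realoptu)$ we obtain $\Test_N\Precond_N \ge (1-\eta)\tauTest_0\kappa^N\grad^2 J(\realoptu)$; after absorbing $(1-\eta)\tauTest_0$ into a slightly smaller $\kappa'>1$ (still admissible), this is the claimed lower bound, and \cref{prop:rate} delivers $\norm{u^N-\realoptu}^2=O(\kappa^{-N})$.

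\textbf{Main obstacle.} The delicate point is the coupling between the admissible growth rate $\kappa$ and the neighborhood radius $r$. The fundamental inequality forces $\tauTest_{i+1}/\tauTest_i \le 1+O(1)$ with a constant that degrades as $\grad^2 J$ oscillates; one must verify that the $\eta\to 0$ limit produces $\kappa$ strictly greater than one uniformly inside the ball, and that the induction does not require $\kappa$ to approach one as iterates contract toward $\realoptu$. In practice, this is handled by fixing a single small $\eta$ at the outset and noting that the resulting $\kappa=\kappa(\eta)>1$ is independent of $i$ once the iterates remain in the $r$-ball, so the local linear rate is genuine rather than asymptotic.
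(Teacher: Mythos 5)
Your proof follows essentially the same approach as the paper: strip away $\subdiff G$ via monotonicity, apply the $C^2$ three-point hypomonotonicity inequality of \cref{lemma:c2-smoothness}, control the Hessian oscillation via continuity of $\grad^2 J$ inside a small ball around $\realoptu$, choose $\tauTest_{i+1}=\kappa\tauTest_i$, and close the localisation inductively. The one place where you should be slightly more careful is the inductive step: you invoke $\Test_{i+2}\Precond_{i+2}=\tauTest_{i+1}\grad^2 J(\nextu)\ge(1-\eta)\tauTest_{i+1}\grad^2 J(\realoptu)$ in order to conclude $\nextu$ lands in the ball, but bounding $\grad^2 J(\nextu)$ already presupposes $\nextu$ close to $\realoptu$. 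The paper sidesteps this by using the single-step quantitative Féjer inequality \eqref{eq:quantitative-fejer-pre} together with the operator bound $\tauTest_i(1+(1-\delta_i)^2)\grad^2 J(\thisu)\ge\tauTest_{i+1}\grad^2 J(\nextu)$ to pass back to the known metric $\grad^2 J(\thisu)$ first, and only then compares to $\grad^2 J(\realoptu)$. Your version is fixable (e.g.\ localise $\nextu$ in a slightly enlarged ball using the $\norm{\nextu-\thisu}$ control, or fold the $\grad^2 J(\nextu)$ term back to $\grad^2 J(\thisu)$ via the same choice of $\tauTest_{i+1}$), but as written it is marginally circular. Apart from that detail, your argument and the paper's are the same.
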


\begin{proof}
    We set $\Precond_{i+1} \defeq \grad^2 J(\thisu)$ and $\Test_{i+1} \defeq \tauTest_i I$ for some $\tauTest_i > 0$.
    Then $\grad^2 J(\realoptu)>0$ imply that $\Test_{i+1}\Precond_{i+1}=\tauTest_i \grad^2 J(\thisu)$ is positive and self-adjoint for $\thisu$ close to $\realoptu$.

    By assumption, for some $\epsilon>0$, we have
    \[
        u^0 \in \realopt\B(\epsilon) \defeq \{ u \in \Space \mid \norm{u-\realoptu}_{\grad^2 J(\realoptu)}\le\epsilon\}.
    \]
    For a fixed $i \in \N$, let us assume that $\thisu \in \realopt\B(\epsilon)$.
    Since $\subdiff F$ is monotone, similarly to the proof of \cref{lemma:graddesc}, the fundamental condition \eqref{eq:convergence-fundamental-condition-iter-h-alt} holds  
    if
    \begin{equation}
        \label{eq:convergence-fundamental-condition-iter-h-newton}
        \tauTest_i D_{i+1}
        \ge
        \frac{1}{2}\norm{\nextu-\realoptu}_{\tauTest_{i+1}\grad^2 J(\nextu)-\tauTest_i\grad^2 J(\thisu)}^2
        - \frac{1}{2}\norm{\nextu-\thisu}_{\tauTest_i\grad^2 J(\thisu)}^2
        -\Penalty_{i+1}(\realoptu),
    \end{equation}
    where we use \eqref{eq:three-point-hypomonotonicity-c2} in \cref{lemma:c2-smoothness} with $\tau=1+\delta_i$ to estimate
    \begin{equation*}
        D_{i+1}
        \defeq
        \iprod{\grad J(\thisu)-\grad J(\realoptu)}{\nextu-\realoptu}
        \ge \frac{(1-\delta_i)^2}{2}\norm{\nextu-\realoptu}^2_{\grad^2 J(\thisu)}-\frac{1}{2}\norm{\nextu-\thisu}^2_{\grad^2 J(\thisu)}
    \end{equation*}
    for
    \begin{equation}
        \label{eq:newton-delta}
        \delta_i \defeq \inf\left\{\delta' \ge 0 \,\middle|\, \begin{array}{r}
            (1-\delta')\grad^2 J(\thisu) \le \grad^2 J(\zeta) \le (1+\delta')\grad^2 J(\thisu)
            \\
            \text{ for all } \zeta \in \realopt\B(\norm{\thisu-\realoptu}_{\grad^2 J(\realoptu)})
            \end{array}
            \right\}.
    \end{equation}
    Consequently, \eqref{eq:convergence-fundamental-condition-iter-h-newton} holds 
    with $\Penalty_{i+1}(\realoptu)=0$
    if we take $\tauTest_{i+1}>0$ such that
    \begin{equation}
        \label{eq:newton:linear:test}
        \tauTest_i(1+(1-\delta_i)^2)\grad^2 J(\thisu)
        \ge
        \tauTest_{i+1}\grad^2 J(\nextu).
    \end{equation}
    This can always be satisfied for some $\tauTest_{i+1}>0$ for $\epsilon>0$ small enough because $\grad^2 J(\realoptu)>0$ then implies $\grad^2 J(\thisu)>0$.

    Now \cref{cor:convergence-result-main-h-alt} shows the quantitative $\Delta$-Féjer monotonicity \cref{eq:quantitative-fejer-pre}, which with \eqref{eq:newton:linear:test} implies
    \begin{equation}
        \label{eq:quantitative-fejer-newton}
        \norm{\nextu-\realoptu}^2_{[1+(1-\delta_i^2)]\grad^2 J(\thisu)}
        \le \norm{\thisu-\realoptu}^2_{\grad^2 J(\thisu)}.
    \end{equation}
    If $\delta_i \in (0, 1)$, this implies by \eqref{eq:newton-delta} that $\norm{\nextu-\realoptu}_{[1+(1-\delta_i^2)]/(1+\delta_i)\grad^2 J(\realoptu)}^2 \le \norm{\thisu-\realoptu}_{\grad^2 J(\realoptu)/(1-\delta_i)}^2$.
    Consequently, if $\delta_i \in (0, 1)$ is small enough, that is, if $\epsilon>0$ is small enough due to the continuity of $\grad^2 J$, we obtain $\norm{\nextu-\realoptu}_{\grad^2 J(\realoptu)} \le \norm{\thisu-\realoptu}_{\grad^2 J(\realoptu)}$ so that also $\nextu \in \realopt\B(\epsilon)$.
    In particular, our assumption $u^0 \in \realopt\B(\epsilon)$ guarantees $\{\thisu\}_{i \in \N} \subset \realopt\B(\epsilon)$.
    Consequently also $\delta_{i+1} \le \delta_i \le \delta_0$ for all $i \in \N$.
    We can now take $\zeta=\nextu$ in \eqref{eq:newton-delta}, so that \eqref{eq:newton:linear:test} gives
    \[
        \tauTest_i(1+(1-\delta_i)^2) \ge (1-\delta_i) \tauTest_{i+1}.
    \]
    Since $\kappa(\delta) \defeq (1+(1-\delta)^2)/(1-\delta)$ is increasing within $(0, 1)$, and $\kappa \defeq \kappa(0)=2$, we see that $\tauTest_{i+1} \ge \kappa \tauTest_i$.
    Taking $\tauTest_0 \defeq 1+\delta_0$ we now get $\Test_{N}\Precond_{N} \ge \kappa^N(1+\delta_0) \grad^2 J(u^N) \ge \kappa^N \grad^2 J(\realoptu)$.
    This implies the convergence rate claim.
\end{proof}

We can also show superlinear convergence, however, this is somewhat more elaborate as we need to make use of $\Delta_{i+1}(\realoptu)$.

\begin{lemma}
    \label{lemma:newton-superlinear}
    With everything as in \cref{lemma:newton}, the convergence rate claim can be improved to superlinear.
    If $\grad^2 J$ is locally Lipschitz near $\realoptu$, for example, if $J \in C^3(\Space)$, then this convergence is quadratic (superlinear convergence of order $q=2$).
\end{lemma}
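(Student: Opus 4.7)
The strategy is to refine the analysis of \cref{lemma:newton} by exploiting the observation that $\delta_i \downto 0$ along the iterates. Indeed, \cref{lemma:newton} already establishes $\{\thisu\}_{i\in\N} \subset \realopt\B(\epsilon)$ with $\thisu \to \realoptu$ at least linearly, so the continuity of $\grad^2 J$ applied to the definition \eqref{eq:newton-delta} gives $\delta_i \downto 0$. In the locally Lipschitz case, the same definition yields the sharper quantitative bound $\delta_i \le L_2 \norm{\thisu - \realoptu}_{\grad^2 J(\realoptu)}$ for some $L_2 > 0$ determined by the Lipschitz constant of $\grad^2 J$ on a neighbourhood of $\realoptu$ and by $\grad^2 J(\realoptu)$.

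The second step is to redo the metric update \eqref{eq:newton:linear:test} with a sharper choice of the free parameter in the three-point inequality of \cref{lemma:c2-smoothness}, letting this parameter grow as $\delta_i \to 0$ rather than keeping it at $\tau = 1+\delta_i$. The extra cross-term in $\norm{\nextu - \thisu}^2$ that this sharper inequality produces can be absorbed into a non-positive $\Penalty_{i+1}(\realoptu) \le 0$, which is harmless in the $\Delta$-Féjer inequality \eqref{eq:quantitative-fejer-pre} and hence in \eqref{eq:quantitative-fejer-newton}. The outcome should be a constraint of the form $\tauTest_{i+1}/\tauTest_i \ge c/\delta_i$ in the continuous case, and $\tauTest_{i+1}/\tauTest_i \ge c'/\delta_i^2 \gtrsim c''/\norm{\thisu - \realoptu}^2$ in the Lipschitz case, each of which can indeed be satisfied.

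From here the conclusion is immediate. Since $\grad^2 J$ is continuous with $\grad^2 J(\realoptu) > 0$, the metrics $\Test_{i+1}\Precond_{i+1} = \tauTest_i \grad^2 J(\thisu)$ are uniformly equivalent to $\tauTest_i \grad^2 J(\realoptu)$ once $\thisu$ is close enough to $\realoptu$. After a change of variable by $\grad^2 J(\realoptu)^{1/2}$ that reduces the metric to a scalar multiple of the identity, \cref{prop:superlinear} delivers superlinear convergence from $\tauTest_i/\tauTest_{i+1} \to 0$. In the Lipschitz case, the quadratic rate is read off directly from \eqref{eq:quantitative-fejer-pre}: $\norm{\nextu - \realoptu}^2 \le (\tauTest_i/\tauTest_{i+1})\, \norm{\thisu - \realoptu}^2 \lesssim \norm{\thisu - \realoptu}^4$.

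The main obstacle lies in the second step: calibrating the three-point inequality parameter so that the faster growth of $\tauTest_{i+1}$ remains compatible with \eqref{eq:newton:linear:test} after eliminating $\grad^2 J(\thisu)$ via the choice $\zeta = \nextu$ in \eqref{eq:newton-delta}, while keeping $\Test_{i+2}\Precond_{i+2}$ self-adjoint and $\Penalty_{i+1}(\realoptu) \le 0$. This is essentially a bookkeeping exercise, but it is where the superlinear or quadratic rate is actually produced.
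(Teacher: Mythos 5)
Your proposal diverges from the paper's proof and contains a genuine gap. The paper does not re-tune the parameter $\tau$ in \cref{lemma:c2-smoothness}; it switches to the different three-point inequality \eqref{eq:three-point-hypomonotonicity-c2x} of \cref{lemma:c2x-smoothness}, whose lower bound on $D_{i+1}$ carries the additional term $\tfrac{1-\delta_i}{2}\norm{\thisu-\realoptu}^2_{\grad^2 J(\thisu)}$, and it takes $\Penalty_{i+1}(\realoptu)$ to be the \emph{telescoping, not necessarily nonpositive} expression \eqref{eq:newton-superlinear-penalty}. After telescoping, the effective metric in \eqref{eq:newton-superlinear-fejer} is $\tauTest_i\delta_i\grad^2 J(\thisu)$, and the superlinear (or quadratic) rate is driven by $\delta_i \to 0$, not by $\tauTest_{i+1}/\tauTest_i \to \infty$; in fact \cref{prop:superlinear} is never invoked in the paper's proof.

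The step that fails in your approach is the anticipated constraint $\tauTest_{i+1}/\tauTest_i \gtrsim 1/\delta_i$ (or $1/\delta_i^2$). With $\Penalty_{i+1}(\realoptu) \le 0$, \eqref{eq:convergence-fundamental-condition-iter-h-newton} combined with \eqref{eq:three-point-hypomonotonicity-c2} for any admissible $\tau \in [1+\delta_i, 2]$ reduces to the requirement $\tauTest_i\bigl(1+(1-\delta_i)(2-\tau)\bigr)\grad^2 J(\thisu) \ge \tauTest_{i+1}\grad^2 J(\nextu)$, which is an \emph{upper} bound on $\tauTest_{i+1}$: the term $\frac{1}{2}\norm{\nextu-\realoptu}^2_{\tauTest_{i+1}\grad^2 J(\nextu)-\tauTest_i\grad^2 J(\thisu)}$ grows with $\tauTest_{i+1}$. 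Since $1+(1-\delta_i)(2-\tau) \le 2$ for every admissible $\tau$, the ratio $\tauTest_{i+1}/\tauTest_i$ is pinned at roughly $2$ — no faster than linear. Pushing $\tau$ toward $2$ does free up some $\norm{\nextu-\thisu}^2$ headroom that could sit in a nonpositive $\Penalty_{i+1}$, but it simultaneously shrinks the coefficient $(1-\delta_i)(2-\tau)$ multiplying $\norm{\nextu-\realoptu}^2$, which is exactly what feeds the metric update, so you lose on the quantity that matters. Consequently the hypotheses of \cref{prop:superlinear}, $\Penalty \le 0$ together with $\tauTest_i/\tauTest_{i+1} \to 0$, are jointly unattainable here, and the quadratic estimate $\norm{\nextu-\realoptu}^2 \lesssim \norm{\thisu-\realoptu}^4$ you propose to read off from \eqref{eq:quantitative-fejer-pre} cannot be realised. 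What actually closes the gap is \cref{lemma:c2x-smoothness}'s extra $\norm{\thisu-\realoptu}^2$ term paired with a telescoping $\Penalty_{i+1}$: together they shift the weight in the $\Delta$-F\'ejer inequality onto the factor $\delta_i$, whose vanishing is the true engine of the superlinear rate.
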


\begin{proof}
    We continue with the initial setup of the proof of \cref{lemma:newton} until \eqref{eq:convergence-fundamental-condition-iter-h-newton}.
    Now, for $\delta_i$ given by \eqref{eq:newton-delta}, \eqref{eq:three-point-hypomonotonicity-c2x} in \cref{lemma:c2x-smoothness} gives
    \[
        D_{i+1}
        \ge
        \frac{1-\delta_i}{2}\norm{\nextu-\realoptu}^2_{\grad^2 J(\thisu)}
        +
        \frac{1-\delta_i}{2}\norm{\thisu-\realoptu}^2_{\grad^2 J(\thisu)}
        -
        \frac{1}{2}\norm{\nextu-\thisu}^2_{\grad^2 J(\thisu)}.
    \]
    With this, \eqref{eq:convergence-fundamental-condition-iter-h-newton}, hence the fundamental condition \eqref{eq:convergence-fundamental-condition-iter-h-alt}, holds if
    \begin{equation*}
        \Penalty_{i+1}(\realoptu)
        \ge
        \frac{1}{2}\norm{\nextu-\realoptu}_{\tauTest_{i+1}\grad^2 J(\nextu)-(2-\delta_i)\tauTest_i\grad^2 J(\thisu)}^2
        -\frac{1}{2}\norm{\thisu-\realoptu}^2_{\tauTest_i(1-\delta_i)\grad^2 J(\thisu)}.
    \end{equation*}
    This holds for
    \begin{equation}
        \label{eq:newton-superlinear-penalty}
        \Penalty_{i+1}(\realoptu) \defeq
        \frac{1}{2}\norm{\nextu-\realoptu}^2_{\tauTest_{i+1}(1-\delta_{i+1})\grad^2 J(\nextu)}
        -\frac{1}{2}\norm{\thisu-\realoptu}^2_{\tauTest_i(1-\delta_i)\grad^2 J(\thisu)} 
    \end{equation}
    provided
    \begin{equation}
        \label{eq:newton-superlinear-cond0}
        \tauTest_i(2-\delta_i)\grad^2 J(\thisu) \ge \tauTest_{i+1} \delta_{i+1}\grad^2 J(\nextu).
    \end{equation}
    This can always be satisfied for some $\tauTest_{i+1}>0$ if $\epsilon>0$ is small enough because then $\grad^2 J(\thisu)>0$ due to $\grad^2 J(\realoptu)>0$.

    By \cref{cor:convergence-result-main-h-alt} we now obtain the quantitative $\Delta$-Féjer monotonicity \eqref{eq:quantitative-fejer-pre}, which with \eqref{eq:newton-superlinear-penalty} gives
    \begin{equation}
        \label{eq:newton-superlinear-fejer-pre}
        \norm{\nextu-\realoptu}_{\tauTest_{i+1}\delta_{i+1}\grad^2J(\nextu)}^2
        \le
        \norm{\thisu-\realoptu}_{\tauTest_i\delta_i\grad^2J(\thisu)}^2.
    \end{equation}
    Due to \eqref{eq:newton-delta}, we have $(1-\delta_i)\grad^2 J(\realoptu) \le \grad^2 J(\thisu) \le (1+\delta_i)\grad^2 J(\realoptu)$.
    Hence, also using \eqref{eq:newton-superlinear-cond0}, \eqref{eq:newton-superlinear-fejer-pre} implies
    \begin{equation}
        \label{eq:newton-superlinear-fejer}
        \norm{\nextu-\realoptu}_{(2-\delta_i)(1-\delta_i)\grad^2 J(\realoptu)}^2
        \le
        \norm{\thisu-\realoptu}_{\delta_i(1+\delta_i)\grad^2J(\realoptu)}^2.
    \end{equation}
    If $\delta_i \in (0, 1/2]$, this and $\thisu \in \realopt\B(\epsilon)$ imply $\nextu \in \realopt\B(\epsilon)$, hence our assumption $u^0 \in \realopt\B(\epsilon)$ implies $\{\thisu\}_{i \in \N} \subset \realopt\B(\epsilon)$.
    Consequently also $\delta_{i+1} \le \delta_i \le \delta_0$ for all $i \in \N$,
    If now $\delta_0 < 1/2$, which is guaranteed by $\epsilon>0$ small enough and the continuity of $\grad^2 J$, then \eqref{eq:newton-superlinear-fejer} implies $\norm{\thisu-\realoptu}_{\grad^2 J(\realoptu)} \to 0$. Consequently $\delta_i \to 0$.

    Let $\tilde \delta_i \defeq \delta_i(1+\delta_i)/[(2-\delta_i)(1-\delta_i)]$.
    From \eqref{eq:newton-superlinear-fejer}, we get superlinear convergence if $\tilde\delta_i \to 0$, which follows from $\delta_i \to 0$.
    Superlinear convergence of order $q>1$ occurs if $\norm{\nextu-\realoptu}_{\grad^2 J(\realoptu)}/\norm{\thisu-\realoptu}_{\grad^2 J(\realoptu)}^q \to c$ for some $c \ge 0$.
    From \eqref{eq:newton-superlinear-fejer}, we see this to hold if $\tilde\delta_i/\norm{\thisu-\realoptu}^{2(q-1)} \to c \in \R$. If $\grad^2 J$ is Lipschitz near $\realoptu$, then $\delta_i \le C \norm{\thisu-\realoptu}$ for some constant $C>0$. Therefore we get superlinear convergence of order $q=2$.
\end{proof}

\cbend

\cbstart
\subsection{Convergence of function values}
\label{sec:value}

We now study how our framework can be used to derive the convergence, or ergodic convergence, of function values. We concentrate on algorithms that are variants of forward--backward splitting, including gradient descent and the proximal point method, although other algorithms can be handled similarly. We again use the three-point inequalities of \cref{sec:three-point}.

\begin{lemma}
    \label{lemma:graddesc-value}
    Let $H=\subdiff G+\grad J$ for $G,J \in \convex(X)$ with $\grad J$ $L$-Lipschitz. For all $i \in \N$, take $\Precond_{i+1} \equiv I$ and $\AltBregFn_{i+1}(u) \defeq \tau_i(\grad J(\thisu) - \grad J(u))$ with $\Step_{i+1}=\tau_i I$ as well as $\Test_{i+1} \equiv \tauTest_i I$ for some $\tau_i,\tauTest_i>0$.
    Then the fundamental condition \eqref{eq:convergence-fundamental-condition-iter-h} holds if 
    \begin{enumerate}[label=(\roman*)] 
        \item\label{item:graddesc-unaccel-value} $\phi_i \equiv \phi_0$ is constant, $\tau_i L < 1$, and \[
            \Penalty_{i+1}(\realoptu) \defeq -\tauTest_i\tau_i([G+J](\nextu)-[G+J](\realoptu)) - \tauTest_i(1-\tau_i L)\norm{u-\thisu}^2/2.
        \]
    \end{enumerate}
    If $J$ is strongly convex with factor $\gamma>0$, alternatively:
    \begin{enumerate}[label=(\roman*),resume]
        \item\label{item:graddesc-accel-value} 
            $\tau_0 L^2 < \gamma$, 
            $\tauTest_{i+1} \defeq \tauTest_i(1+\tau_i(\gamma-\tau_iL^2))$, $\tau_i \defeq \tauTest_i^{-1/2}$ or $\tau_i \defeq \tau_0$, 
            and
            \[
                \Penalty_{i+1}(\realoptu)=-\tauTest_i\tau_i([G+J](\nextu)-[G+J](\realoptu)).
            \]
    \end{enumerate}
\end{lemma}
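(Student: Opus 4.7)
The plan is to follow the structure of \cref{lemma:graddesc}, but to replace the monotonicity of $\partial G$ and the hypomonotonicity inequality for $\nabla J$ with convexity-based three-point inequalities that carry function-value differences. Since $\widehat u \in \inv H(0)$, I begin by selecting $-\nabla J(\widehat u) \in \partial G(\widehat u)$; substituting this into the left-hand side of \eqref{eq:convergence-fundamental-condition-iter-h-alt} causes the $\nabla J(\widehat u)$ and $\nabla J(u^{i+1})$ terms to cancel, leaving
\[
    \phi_i \tau_i \langle \partial G(u^{i+1}) + \nabla J(u^i), u^{i+1} - \widehat u\rangle.
\]

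For part (i) I would combine the subgradient inequality $\langle \partial G(u^{i+1}), u^{i+1} - \widehat u\rangle \ge G(u^{i+1}) - G(\widehat u)$ with the three-point smoothness bound
\[
    \langle \nabla J(u^i), u^{i+1} - \widehat u\rangle \ge J(u^{i+1}) - J(\widehat u) - \tfrac{L}{2}\|u^{i+1} - u^i\|^2,
\]
obtained from the descent lemma for $L$-smooth $J$ applied at the pair $(u^i, u^{i+1})$ together with convexity of $J$ applied at $(u^i, \widehat u)$. Summing these and comparing with the right-hand side of \eqref{eq:convergence-fundamental-condition-iter-h-alt}, which with $\phi_{i+1} = \phi_i$ reads $-\tfrac{\phi_i}{2}\|u^{i+1}-u^i\|^2 - \Delta_{i+1}(\widehat u)$, the claimed formula for $\Delta_{i+1}$ emerges by matching coefficients; the bound $\tau_i L < 1$ is what makes the kinetic summand $-\phi_i(1-\tau_i L)\|u^{i+1}-u^i\|^2/2$ of $\Delta_{i+1}$ non-positive, as is needed for subsequent descent estimates.

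For part (ii), strong convexity of $J$ sharpens the smoothness bound to
\[
    \langle \nabla J(u^i), u^{i+1} - \widehat u\rangle \ge J(u^{i+1}) - J(\widehat u) + \tfrac{\gamma}{2}\|u^i - \widehat u\|^2 - \tfrac{L}{2}\|u^{i+1} - u^i\|^2.
\]
After transferring $\phi_i\tau_i([G+J](u^{i+1}) - [G+J](\widehat u))$ into $\Delta_{i+1}$, the hard part is to trade $\tfrac{\gamma}{2}\|u^i - \widehat u\|^2$ against the target coefficient $\tfrac{1}{2}(\phi_{i+1}-\phi_i)\|u^{i+1}-\widehat u\|^2$ on the right of (CI), while keeping the spillover into $\|u^{i+1}-u^i\|^2$ absorbed by $-\tfrac{\phi_i}{2}\|u^{i+1}-u^i\|^2$. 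I plan to apply Young's inequality to the identity $\|u^i-\widehat u\|^2 = \|(u^{i+1}-\widehat u) - (u^{i+1}-u^i)\|^2$ with parameter $\alpha \defeq \tau_i L^2/\gamma \in (0,1)$. This calibration is chosen precisely so that the resulting $\|u^{i+1}-\widehat u\|^2$ coefficient becomes $\phi_i\tau_i(\gamma - \tau_i L^2)$, matching the stated update $\phi_{i+1} = \phi_i(1+\tau_i(\gamma-\tau_iL^2))$ exactly; the residual $\|u^{i+1}-u^i\|^2$ contribution has the correct sign thanks to the bound $\tau_i \le \tau_0 < \gamma/L^2$, which is preserved under both step-length rules $\tau_i = \phi_i^{-1/2}$ and $\tau_i = \tau_0$ since $\phi_i$ is non-decreasing.
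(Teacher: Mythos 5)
Your proof is correct and follows the same overall architecture as the paper's: reduce the condition to a bound on $\tauTest_i\tau_i\iprod{\partial G(\nextu)+\grad J(\thisu)}{\nextu-\realoptu}$, keep $\partial G$ via the subgradient inequality rather than eliminating it by monotonicity, and apply a value three-point smoothness estimate for $J$. For part (i) your route is literally the paper's via \eqref{eq:three-point-smoothness}.

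For part (ii) you take a genuinely different (though equivalent) path to the strong-convexity three-point bound. The paper invokes \eqref{eq:three-point-smoothness-sc}, whose proof in \cref{lemma:sc-smoothness} decomposes $\grad J(\thisu)=\grad J(\nextu)+[\grad J(\thisu)-\grad J(\nextu)]$ and applies strong convexity at $\nextu$, so the quadratic that appears is directly $\norm{\nextu-\realoptu}^2$; with the Young parameter set to $\tau=\tau_i$ the spillover $-\tfrac{1}{2\tau_i}\norm{\nextu-\thisu}^2$, after multiplication by $\tauTest_i\tau_i$, exactly saturates the kinetic budget $\tfrac{\tauTest_i}{2}\norm{\nextu-\thisu}^2$ with no slack. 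You instead derive the classical descent-lemma-plus-strong-convexity version anchored at $\thisu$, which produces $\norm{\thisu-\realoptu}^2$, and then convert with a separate Young step; your calibration $\alpha=\tau_iL^2/\gamma$ indeed reproduces the $\norm{\nextu-\realoptu}^2$ coefficient $\tauTest_i\tau_i(\gamma-\tau_iL^2)/2$. The cost of this route is a residual $\norm{\nextu-\thisu}^2$ term with total coefficient $\tauTest_i\bigl(1+\tau_i\gamma-\gamma^2/L^2-\tau_iL\bigr)/2$, which you assert is nonnegative but do not verify. It is: the expression is affine in $\tau_i$ with value $1-\gamma^2/L^2\ge0$ at $\tau_i=0$ (since $\gamma\le L$) and value $1-\gamma/L\ge0$ at $\tau_i=\gamma/L^2$, hence nonnegative on $[0,\gamma/L^2]$. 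Adding that one line would make your argument airtight; the paper's pre-packaged lemma sidesteps it entirely.
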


\begin{proof}
    We fellow the proof of \cref{lemma:graddesc}, where we start by expanding \eqref{eq:convergence-fundamental-condition-iter-h} (instead of \eqref{eq:convergence-fundamental-condition-iter-h-alt}) as
    \begin{equation*}
        \frac{\tauTest_i}{2}\norm{u-\thisu}^2
        + \frac{\tauTest_i-\tauTest_{i+1}}{2}\norm{u-\realoptu}^2
        +\tauTest_i\tau_i\iprod{H(\nextu)}{u-\realoptu}
        \ge 0.
    \end{equation*}
    Note that we have not inserted $H(\realoptu) \ni 0$ here.
    Now, as the next step, we do not eliminate $G$ through monotonicity of $\subdiff G$, but use the definition of the convex subdifferential.
    Then we use the value three-point inequality \eqref{eq:three-point-smoothness} in place of the non-value inequality \eqref{eq:three-point-hypomonotonicity} and the value inequality \eqref{eq:three-point-smoothness-sc} in place of the non-value inequality \eqref{eq:three-point-hypomonotonicity-sc}. From here the claims follow as in the proof of \cref{lemma:graddesc}.
    Note the factor-of-two differences between these formulas, which are reflected in the step length rules: $\tau_i L < 1$ instead of $\tau_i L < 2$; $\tau_0 L^2 < \gamma$ instead of  $\tau_0 L^2 < 2\gamma$; and $\tauTest_{i+1} \defeq \tauTest_i(1+\tau_i(\gamma-\tau_iL^2))$ instead of $\tauTest_{i+1} \defeq \tauTest_i(1+\tau_i(2\gamma-\tau_iL^2))$.
\end{proof}

We now obtain the convergence to zero of a weighted function value difference over the history of iterates, and as a consequence, for an ergodic sequence formed from the iterates:

\begin{corollary}
    \label{cor:graddesc-ergodic}
    Suppose the conditions of \cref{lemma:graddesc-value} hold. Then
    \begin{equation}
        \label{eq:graddesc-di}
        \frac{\tauTest_N}{2}\norm{u^N-\realoptu}^2 + \sum_{i=0}^{N-1} \tauTest_i\tau_i([G+J](\nextu)-[G+J](\realoptu)) \le C_0 \defeq \frac{\tauTest_0}{2}\norm{u^0-\realoptu}^2.
    \end{equation}
    In consequence, if we define the ergodic sequence
    \[
        \tilde u_{N} \defeq \inv\zeta_{N}\sum_{i=0}^{N-1} \tauTest_i\tau_i \nextx,
        \quad\text{where}\quad
        \zeta_N \defeq \sum_{i=0}^{N-1} \tauTest_i\tau_i,
    \]
    then
    \begin{equation}
        \label{eq:graddesc-ergodic}
        [G+J](\tilde u_N) - [G+J](\realoptu) \le \frac{\tauTest_0}{2\zeta_N}\norm{u^0-\realoptu}^2.
    \end{equation}
    In particular, if \cref{lemma:graddesc-value}\ref{item:graddesc-unaccel-value} holds, then $[G+J](\tilde u_N) \to [G+J](\realoptu)$ at the rate $O(1/N)$. If, instead, \cref{lemma:graddesc-value}\ref{item:graddesc-accel-value} holds, then this convergence is linear.
\end{corollary}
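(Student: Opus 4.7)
The plan is to directly apply \cref{cor:convergence-result-main-h-alt} using \cref{lemma:graddesc-value}. Since \eqref{eq:convergence-fundamental-condition-iter-h} holds in either case \ref{item:graddesc-unaccel-value} or \ref{item:graddesc-accel-value}, the descent inequality \eqref{eq:convergence-result-main-h} reads
\[
    \frac{\tauTest_N}{2}\norm{u^N-\realoptu}^2
    \le
    \frac{\tauTest_0}{2}\norm{u^0-\realoptu}^2
    + \sum_{i=0}^{N-1}\Penalty_{i+1}(\realoptu).
\]
In both cases of \cref{lemma:graddesc-value}, $\Penalty_{i+1}(\realoptu)$ contains the term $-\tauTest_i\tau_i([G+J](\nextu)-[G+J](\realoptu))$, supplemented by a non-positive quantity ($-\tauTest_i(1-\tau_iL)\norm{\nextu-\thisu}^2/2$ in case \ref{item:graddesc-unaccel-value} and zero in case \ref{item:graddesc-accel-value}). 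Moving these function-value differences to the left-hand side, and discarding the non-positive contribution on the right, yields \eqref{eq:graddesc-di}.

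For the ergodic bound \eqref{eq:graddesc-ergodic}, the key observation is that $G+J$ is convex, so Jensen's inequality applied with the normalised weights $\tauTest_i\tau_i/\zeta_N$ gives
\[
    [G+J](\tilde u_N)
    \le \inv\zeta_N \sum_{i=0}^{N-1}\tauTest_i\tau_i [G+J](\nextu).
\]
Subtracting $[G+J](\realoptu)$ on both sides and using \eqref{eq:graddesc-di} (plus the non-negativity of $\frac{\tauTest_N}{2}\norm{u^N-\realoptu}^2$) then delivers \eqref{eq:graddesc-ergodic}.

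It remains to evaluate $\zeta_N$ in each case. In case \ref{item:graddesc-unaccel-value}, $\tauTest_i\equiv\tauTest_0$ and $\tau_i\equiv\tau_0$ give $\zeta_N = N\tauTest_0\tau_0$, producing the $O(1/N)$ rate. In case \ref{item:graddesc-accel-value} with fixed $\tau_i\equiv\tau_0$, the update rule yields $\tauTest_i = (1+\tau_0(\gamma-\tau_0L^2))^i\tauTest_0$, which grows geometrically since $\tau_0L^2<\gamma$; hence $\zeta_N$ also grows geometrically, producing the claimed linear convergence. The only mildly subtle point is keeping track of the sign conventions when absorbing the Penalty terms into the left-hand side; otherwise the argument is a direct rearrangement plus Jensen.
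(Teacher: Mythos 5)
Your proposal is correct and follows essentially the same route as the paper: descend from the fundamental condition to the descent inequality, move the function-value part of $\Penalty_{i+1}$ to the left, discard the remaining non-positive piece, then apply Jensen's inequality and compute $\zeta_N$. One small citation slip: since \cref{lemma:graddesc-value} provides \eqref{eq:convergence-fundamental-condition-iter-h}, i.e.\ the condition \eqref{eq:convergence-fundamental-condition-iter-h} of \cref{thm:convergence-result-main-h}, the result to invoke is \cref{thm:convergence-result-main-h} directly rather than \cref{cor:convergence-result-main-h-alt}, whose hypothesis is the different condition \eqref{eq:convergence-fundamental-condition-iter-h-alt}; this does not affect the mathematics, only the reference.
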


\begin{proof}
    The basic inequality \eqref{eq:graddesc-di} is a consequence of the fundamental \cref{thm:convergence-result-main-h}. The ergodic estimate \eqref{eq:graddesc-ergodic} follows from there by Jensen's inequality. The first convergence rate estimate follows from \eqref{eq:graddesc-ergodic} are based on the fact that under \cref{lemma:graddesc-value}\ref{item:graddesc-unaccel-value} $\tauTest_i\tau_i=\tauTest_0\tau_0$ is a constant, so $\zeta_N=N\tauTest_0\tau_0$. Under \cref{lemma:graddesc-value}\ref{item:graddesc-unaccel-value} we recall from \cref{example:prox-accel} that the rule for $\tauTest_{i+1}$ shows that $\tauTest_{i+1}$ is grows exponentially with $\tau_i=\tau_0$ constant. Then also $\zeta_N$ is exponential, so we obtain linear rates.
\end{proof}

The following three examples follow from \cref{cor:graddesc-ergodic}.
For the proximal point method, additionally, since we can still let $\tau_i \upto \infty$ due to $L=0$, we can also get superlinear convergence. Also, in the case of the proximal point method, we use the strong convexity of $F$, which is for simplicity not considered in \eqref{lemma:graddesc-value}, but can easily be added.

\begin{example}[Proximal point method ergodic function value]
    \label{example:prox-gap}
    For the proximal point method of \cref{example:prox,example:prox-accel}, applied to $H=\subdiff G$ with $G \in \convex(\Space)$, we have $G(\tilde u_N) \to G(\realoptu)$ at the rate $O(1/N)$ when $\tau_i \equiv \tau_0$ and no strong convexity is present. If $G$ is strongly convex, and $\tau_i \equiv \tau_0$, the convergence is linear; if $\tau_i \upto \infty$, the convergence is superlinear.
\end{example}

\begin{example}[Gradient descent ergodic function value]
    \label{example:graddesc-gap}
    For the gradient descent method of \cref{example:graddesc,example:graddesc-accel}, applied to $J \in \convex(\Space)$ with $L$-Lipschitz gradient, if $\tau_i \equiv \tau_0$ with $\tau_0 L \le 1$, we have $J(\tilde u_N) \to J(\realoptu)$ at the rate the $O(1/N)$.
    If $J$ is strongly convex, $\tau_0 L^2 < \gamma$, and we update $\tau_{i+1} \defeq \tau_i/\sqrt{1+(2\gamma-\tau_iL^2)}$, then this convergence is $O(1/N^2)$.
\end{example}

\begin{example}[Forward--backward splitting ergodic function value]
    For the forward--backward splitting of \cref{example:fb}, $[G+J](\tilde u_N) \to [G+J](\realoptu)$ at exactly the same rates and conditions are for gradient descent in  \cref{example:graddesc-gap}.
\end{example}

For Newton's method, we can use similar arguments: we can replace \eqref{eq:three-point-hypomonotonicity-c2} by \eqref{eq:three-point-smoothness-c2} in \cref{lemma:newton}, and \eqref{eq:three-point-hypomonotonicity-c2x} by \eqref{eq:three-point-smoothness-c2x} in \cref{lemma:newton-superlinear}.
This can be done because the preceding non-value lemmas show that $\{\thisu\}_{i \in \N} \in \realopt\B(\epsilon)$.
In \cref{lemma:newton} the effect of the change is to replace $(1-\delta_i)^2$ by $\delta_i^2-3\delta_i$ everywhere, and in \cref{lemma:newton-superlinear}, to replace $2-\delta_i$ by $1-2\delta_i$. With these changes, the main arguments go through, although the exact value of $\kappa$ and the upper bounds for $\delta_i$ in the final paragraphs are changed.

\begin{example}[Newton's method function value]
    \label{example:newton-value}
    For Newton's method in \cref{example:newton}, we have $\tau_i = 1$ and $\tauTest_N \defeq \kappa^N\tauTest_0$ for some $\kappa>1$. We have $J(\tilde u^N) \to J(\realoptu)$ (super)linearly.
\end{example}

We can also obtain non-ergodic convergence for \term{monotone} methods. We demonstrate the idea only for the unaccelerated ($\tauTest_i\tau_i=\tauTest_0\tau_0$) proximal point method, but unaccelerated forward--backward splitting and gradient descent can be handled analogously.

\begin{example}[Proximal point method function value]
    \label{example:prox-gap-non-ergodic}
    For the proximal point method of \cref{example:prox,example:prox-accel}, applied to $H=\subdiff G$ with $G \in \convex(\Space)$, we have $G(u^N) \to G(\realoptu)$ at the rate $O(1/N)$ when $\tau_i \equiv \tau_0$ and no strong convexity is present. If $G$ is strongly convex, and $\tau_i \equiv \tau_0$, the convergence is linear; if $\tau_i \upto \infty$, the convergence is superlinear.
\end{example}

\begin{demonstration}
    From \eqref{eq:pp}, that is $0 \in \subdiff F(\nextu) + \tau_i(\nextu-\thisu)$, we have
    \begin{equation}
        \label{eq:prox-monotone}
        0 \le
        \inv\tau_i \norm{\nextx-\thisx}_X^2
        =
        \iprod{\subdiff G(\nextx)}{\thisx-\nextx}_X
        \le G(\thisx)-G(\nextx).
    \end{equation}
    That is, the proximal point method is monotone:
    Now we use \cref{cor:graddesc-ergodic}.
    Using \eqref{eq:prox-monotone} to unroll the function value sum in \eqref{eq:graddesc-di}  gives $\zeta_N [G(u^N)-G(\realoptu)] \le C_0$. The rates follow as in \cref{cor:graddesc-ergodic,example:prox-gap}.
\end{demonstration}

\cbend

\subsection{Connections to fixed point theorems}
\label{sec:browder}

\cbstart
We demonstrate connections of our approach to established fixed point theorems.
The following result in its modern form, stated for firmly non-expansive or more generally $\alpha$-averaged maps, can be first found in \cite{browder1967convergence}. Similar results for what are now known as Krasnoselski--Mann iterations, closely related to $\alpha$-averaged maps, were, however, stated earlier for more limited settings in \cite{mann1953mean,schaefer1957,petryshyn1966construction,krasnoselski1955two,opial1967weak}.
\cbend

\begin{example}[Browder's fixed point theorem]
    \label{example:browder}
    Let $T: \Space \to \Space$ be $\alpha$-averaged, that is $T=(1-\alpha)J+\alpha I$ for some non-expansive $J$ and $\alpha \in (0, 1)$.
    Suppose there exists a fixed point $\realoptu=T(\realoptu)$.
    Let $\nextu \defeq T(\thisu)$.
    Then $\thisu \weakto u^*$ for some fixed point $u^*$ of $T$.
\end{example}

\begin{demonstration}[Proof]
    Let us set $H(u) \defeq T(u)-u$, as well as $\Test_{i+1} \defeq \Step_{i+1} \defeq \Precond_{i+1} \defeq I$ and $\AltBregFn_{i+1}(u) \defeq T(\thisu)+\thisu-T(u)-u$. 
    We have
    \begin{equation}
        \label{eq:browder-tildeh}
        \tilde H_{i+1}(\nextu) \defeq \Step_{i+1} H(\nextu) + \AltBregFn_{i+1}(\nextu) = T(\thisu)+\thisu-2\nextu = \thisu-\nextu,
    \end{equation}
    where the last step follows by observing from the previous steps that \eqref{eq:pp} says $\nextu=T(\thisu)$.
    The expression \eqref{eq:browder-tildeh} easily gives the \cbstart iteration outer semicontinuity condition \eqref{eq:precond-continuity}, \cbend and reduces the fundamental condition \eqref{eq:convergence-fundamental-condition-iter-h} to
    \[
        \frac{1}{2}\norm{\nextu-\thisu}^2
        +
        \iprod{\thisu-\nextu}{\nextu-\realoptu}
        \ge
        -\Penalty_{i+1}(\realoptu).
    \]
    Using $\nextu=T(\thisu)$ and $\realoptu=T(\realoptu)$, and taking $\beta>0$, \eqref{eq:convergence-fundamental-condition-iter-h} therefore holds for
    \begin{equation}
        \label{eq:browder-penalty}
        \Penalty_{i+1}(\realoptu)
        =
        \frac{\alpha+2\beta-1}{2(1-\alpha)}\norm{\nextu-\thisu}^2
    \end{equation}
    provided
    \[
        0 \le D \defeq \frac{\beta}{1-\alpha}\norm{T(\thisu)-\thisu}^2
        +
        \iprod{
        \thisu-\realoptu
        -(T(\thisu)-T(\realoptu))
        }{T(\thisu)-T(\realoptu)}.
    \]
    Using the $\alpha$-averaged property and $\realoptu=J(\realoptu)$, we expand
    \begin{multline*}
        \frac{D}{1-\alpha}
        =
        \beta\norm{J(\thisu)-\thisu}^2
        +
        \iprod{\thisu-\realoptu-J(\thisu)+J(\realoptu)}{(1-\alpha)(J(\thisu)-J(\realoptu))+\alpha(\thisu-\realoptu)}
        \\
        =
        (\alpha+\beta)\norm{\thisu-\realoptu}^2
        +(\beta+\alpha-1)\norm{J(\thisu)-J(\realoptu)}^2
        -(2\alpha+2\beta-1)\iprod{J(\thisu)-J(\realoptu)}{\thisu-\realoptu}.
    \end{multline*}
    We take $\beta \defeq \max\{0, 1/2-\alpha\}$. Then $2\alpha+2\beta \ge 1$.
    Cauchy's inequality and non-expansivity of $J$ thus give
    \[
        \frac{D}{1-\alpha}
        \ge
        \frac{1}{2}\norm{\thisu-\realoptu}^2
        -\frac{1}{2}\norm{J(\thisu)-J(\realoptu)}^2
        \ge 0.
    \]
    This verifies \eqref{eq:convergence-fundamental-condition-iter-h}. From \eqref{eq:browder-penalty}, $\Penalty_{i+1}(\realoptu) \le -\frac{1}{2}\min\{1, \alpha/(1-\alpha)\}\norm{\nextu-\thisu}^2$.
    We now obtain the claimed convergence from \cref{cor:convergence-result-main-h-alt,prop:rateless}. 
\end{demonstration}

\section{Stochastic methods}
\label{sec:stochastic-examples}

\cbstart
We now exploit the fact that the step length $\Step_{i+1}$ can be a non-invertible operator. We do this in the context of stochastic block-coordinate methods. Towards this end we introduce the following probabilistic notations:
\cbend

\begin{definition}
    We write $x \in \Random(X)$ if $x$ is an $X$-valued random variable: $x: \Omega \to X$ for some (in the present work fixed) probability space $(\Omega, \SAlg)$, where $\SAlg$ is a $\sigma$-algebra on $\Omega$.
    We denote by $\E$ the expectation with respect to a probability measure $\P$ on $\Omega$.
    As is common, we abuse notation and write $x=x(\omega)$ for the unknown random realisation $\omega \in \Omega$.
    We also write $\E[\cdot|i]$ for the conditional expectation with respect to random variable realisations up to and including iteration $i$.
\end{definition}

We refer to \cite{shiryaev1996probability} for more details on measure-theoretic probability.

\cbstart
The following is an immediate corollary of \cref{thm:convergence-result-main-h}, obtained by taking the expectation of both \eqref{eq:convergence-fundamental-condition-iter-h} and \eqref{eq:convergence-result-main-h}. By only requiring these inequalities to hold in expectation may
may produce more lenient step length and other conditions. In the section, we demonstrate the flexibility of our techniques to stochastic methods with a few basic examples. We refer to the review article \cite{wright2015coordinate} for an  introduction and further references to stochastic coordinate descent, and to our companion paper \cite{tuomov-blockcp} for primal--dual methods based on the work here.

\begin{corollary}
    \label{cor:convergence-result-main-h-stoch}
    On a Hilbert space $\Space$ and a probability space $(\Omega, \SAlg)$, let $\tilde H_{i+1}: \Random(\Space \setto \Space)$, and $\Precond_{i+1}, \Test_{i+1} \in \Random(\linear(\Space; \Space))$ for $i \in \N$.
    Suppose \eqref{eq:ppext} is solvable for $\{\nextu\}_{i \in \N} \subset \Random(\Space)$.
    If for all $i \in \N$ and almost all random events $\omega \in \Omega$, $(\Test_{i+1}\Precond_{i+1})(\omega)$ is self-adjoint, and for some $\Penalty_{i+1} \in \Random(\R)$ and $\realoptu \in \Space$ the \term{expected fundamental condition}
    \begin{equation}
        \label{eq:convergence-fundamental-condition-iter-h-stoch}
        \tag{C$\E{\sim}$}
        \begin{split}
        \E[\iprod{\tilde H_{i+1}(\nextu)}{\nextu-\realoptu}_{\Test_{i+1}}]
        &
        \ge 
        \E\left[\frac{1}{2}\norm{\nextu-\realoptu}_{\Test_{i+2}\Precond_{i+2}-\Test_{i+1}\Precond_{i+1}}^2\right]
        \\ \MoveEqLeft[-1]
        - \E\left[\frac{1}{2}\norm{\nextu-\thisu}_{\Test_{i+1} \Precond_{i+1}}^2\right]
        - \E[\Penalty_{i+1}(\realoptu)],
        \end{split}
    \end{equation}
    holds, then so does the \term{expected descent inequality}
    \begin{equation}
        \label{eq:convergence-result-main-h-stoch}
        \tag{D$\E$}
        \E\left[\frac{1}{2}\norm{u^N-\realoptu}^2_{\Test_{N+1}\Precond_{N+1}}\right]
        \le
        \E\left[\frac{1}{2}\norm{u^0-\realoptu}^2_{\Test_{1}\Precond_{1}}\right]
        +
        \sum_{i=0}^{N-1} \E[\Penalty_{i+1}(\realoptu)]
        \quad
        (N \ge 1).
    \end{equation}
\end{corollary}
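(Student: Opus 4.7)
The plan is to mirror the proof of \cref{thm:convergence-result-main-h} step by step, inserting expectations at the very end. The crucial observation is that the deterministic proof uses only two algebraic manipulations---substitution of the iteration \eqref{eq:ppext} and the three-point identity \eqref{eq:standard-identity}---both of which are pathwise equalities. Since expectation is linear, these can be carried through verbatim before a final averaging step.

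First, I would substitute the inclusion \eqref{eq:ppext} into the inner product appearing in the expected fundamental condition \eqref{eq:convergence-fundamental-condition-iter-h-stoch}: almost surely there exists a selection from $\tilde H_{i+1}(\nextu)$ equal to $-\Precond_{i+1}(\nextu - \thisu)$, so that pathwise
\[
    -\iprod{\nextu-\thisu}{\nextu-\realoptu}_{\Test_{i+1}\Precond_{i+1}} \in \iprod{\tilde H_{i+1}(\nextu)}{\nextu-\realoptu}_{\Test_{i+1}}.
\]
Feeding this into \eqref{eq:convergence-fundamental-condition-iter-h-stoch} produces the expectation-version of the transformed inequality \eqref{eq:convergence-fundamental-condition-iter-h-transformed}.

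Second, I would apply the three-point identity \eqref{eq:standard-identity} pathwise with $M = (\Test_{i+1}\Precond_{i+1})(\omega)$, which is self-adjoint for almost every $\omega \in \Omega$ by hypothesis. Rearranging and then taking expectations on both sides converts the cross term into a combination of squared norms and delivers the expected quantitative $\Delta$-Féjer monotonicity
\[
    \E\left[\tfrac{1}{2}\norm{\nextu-\realoptu}^2_{\Test_{i+2}\Precond_{i+2}}\right] \le \E\left[\tfrac{1}{2}\norm{\thisu-\realoptu}^2_{\Test_{i+1}\Precond_{i+1}}\right] + \E[\Penalty_{i+1}(\realoptu)].
\]
Telescoping this over $i=0,\ldots,N-1$ yields the claimed \eqref{eq:convergence-result-main-h-stoch}.

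The only subtlety---hardly an obstacle---is the measurability of the pathwise selection from $\tilde H_{i+1}(\nextu)$ used in the first step, but this is already absorbed into the stated solvability of \eqref{eq:ppext} on $\Random(\Space)$. No genuinely new inequality is required beyond what is used in the proof of \cref{thm:convergence-result-main-h}, which is precisely why the result is a corollary rather than a standalone theorem.
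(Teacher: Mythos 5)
Your proof is correct and matches the paper's intent, which dismisses the corollary as "immediate" precisely because, as you observe, the two manipulations in the proof of \cref{thm:convergence-result-main-h}---substituting \eqref{eq:ppext} and applying the three-point identity \eqref{eq:standard-identity}---are pathwise equalities, so they commute with the expectation and convert the expected fundamental condition \eqref{eq:convergence-fundamental-condition-iter-h-stoch} into an expected quantitative Féjer monotonicity that telescopes to \eqref{eq:convergence-result-main-h-stoch}. You also correctly note the one point worth being careful about---that the hypothesis holds only in expectation, so one cannot simply apply \cref{thm:convergence-result-main-h} pathwise and average afterwards, but must rerun its algebraic steps pathwise and take expectations at the end.
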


In block-coordinate descent methods, we write $u=\sum_{j=1}^m P_j u$ for some mutually orthogonal projections operators, and on each step of the method, only update some of the ``blocks'' $P_j u$. Functions with respect to which we take a proximal step, we assume \term{separable} with respect to these projections or subspaces: $G=\sum_{j=1}^m G_{j} \circ P_j$.
To perform forward steps, we introduce a blockwise version of standard smoothness conditions of convex functions. The idea is that the factor $L_{S(i)}$ for the subset of blocks $S(i)$ can be better than the global smoothness or Lipschitz factor $L$.

\cbend

\begin{definition}
    We write $(P_1, \ldots, P_m) \in \mathcal{P}(\Space)$ if $P_1,\ldots,P_m$ are projection operators in $\Space$ with $\sum_{j=1}^m P_j = I$, and $P_jP_i=0$ for $i \ne j$.
    For random $S(i) \subset \{1,\ldots,m\}$ and an iteration $i \in \N$, we then set
    \[
        P_{S(i)} \defeq \sum_{j \in S(i)} P_j,
        \quad\text{and}\quad
        \Pi_{S(i)} \defeq \sum_{j \in S(i)} \inv \pi_{j,i} P_j,
        \quad\text{where}\quad
        \pi_{j,i} \defeq \P[j \in S(i)] > 0.
    \]
    For smooth $J \in \convex(\Space)$, we let $L_{S(i)} > 0$ be the $\Pi_{S(i)}$-relative smoothness factor, satisfying
    \begin{equation}
        \label{eq:pi-smooth}
        J(u+\Pi_{S(i)}h) \le J(u) + \iprod{\grad J(u)}{h}_{\Pi_{S(i)}}+\frac{L_{S(i)}}{2}\norm{h}_{\Pi_{S(i)}}^2
        \quad (u, h \in \Space),
    \end{equation}
    and consequently (see \cref{lemma:subspace-smoothness})
    \begin{equation}
        \label{eq:pi-coco}
        \inv L_{S(i)} \norm{\grad J(u)-\grad J(v)}_{\Pi_{S(i)}}^2
        \le
        \iprod{\grad J(u)-\grad J(v)}{u-v},
        \quad (u, v \in \Space).
    \end{equation}
\end{definition}

\begin{example}[Stochastic block-coordinate descent]
    \label{example:stochgd}
    Let $H=\grad J$ for $J \in \convex(\Space)$ with Lipschitz gradient.
    Also let $(P_1,\ldots,P_m) \in \mathcal{P}(\Space)$.
    For each $i \in \N$, take random $S(i) \subset \{1,\ldots,n\}$, and set
    \begin{equation}
        \label{eq:stochgd-altbregfn}
        \Step_{i+1} \defeq \tau_i \Pi_{S(i)},
        \quad
        \Precond_{i+1} \defeq I,
        \quad\text{and}\quad
        \AltBregFn_{i+1}(u) \defeq
        \Step_{i+1}[\grad J(\thisu)-\grad J(u)].
    \end{equation}
    Then \eqref{eq:pp} says that we take a forward step on the random subspace $\mathop{\mathrm{range}}(\Pi_{S(i)})$:
    \begin{equation}
        \label{eq:stochastic-graddesc-update}
        \nextu=\thisu - \tau_i \Pi_{S(i)} \grad J(\thisu).
    \end{equation}
    If the step lengths are deterministic and satisfy $\epsilon \le \tau_i$ and $\tau_i L_{S(i)} \le \pi_{j,i}$ for all $j \in S(i)$ for some $\epsilon>0$, we have $\E[J(\tilde u_N)] \to J(\realoptu)$ at the rate $O(1/N)$ \cbstart for the ergodic sequence
    \[
        \tilde u_N \defeq \inv\zeta_N\sum_{i=0}^{N-1}\E[\tau_i\Pi_{S(i)}\nextu]
        \quad\text{where}\quad
        \zeta_N \defeq \sum_{i=0}^{N-1} \tau_i
        \quad (N \ge 1).
    \] \cbend
    Through the use of the ``local'' smoothness factors $L_{S(i)}$, the method may be able to take larger steps $\tau_i$ than those allowed by the global factor $L$ in \cref{example:graddesc}.
\end{example}

The smoothness of $G$ limits the usefulness of \cref{example:stochgd}. However, it forms the basis for popular stochastic forward--backward splitting methods, of which we now provide an example.

\begin{example}[Stochastic forward--backward splitting]
    \label{example:stochfb}
    Let $(P_1,\ldots,P_m) \in \mathcal{P}(\Space)$.
    Suppose $H=\subdiff G+\grad J$ for $J, G \in \convex(\Space)$, where $J$ has Lipschitz gradient, and $G$ is separable: $G=\sum_{j=1}^m G_{j} \circ P_j$.
    Take $\Precond_{i+1}$, $\Step_{i+1}$, and $\AltBregFn_{i+1}$ as in \cref{example:stochgd}.
    Then \eqref{eq:pp} describes the stochastic forward--backward splitting method
    \begin{equation*}
        \nextu \defeq \inv{(I+\tau_i \Pi_{S(i)}\subdiff G)}\bigl(\thisu - \tau_i \Pi_{S(i)} \grad J(\thisu)\bigr).
    \end{equation*}
    With $u_j \defeq P_j u$, this can be written
    \begin{equation*}
        \nextu_j \defeq
        \begin{cases}
            \inv{(I+\tau_i \inv\pi_{j,i} \subdiff G_{j})}\bigl(\thisu_j - \tau_i \inv\pi_{j,i} P_j \grad J(\thisu)\bigr), & j \in S(i), \\
            u_j, & j \not\in S(i).
        \end{cases}
    \end{equation*}
    The method has exactly the same convergence properties as the stochastic gradient descent of \cref{example:stochgd}.
\end{example}

\begin{remark}
    Following \cref{example:graddesc-accel}, if $G$ or $J$ is strongly convex, it is also possible to construct accelerated versions of both \cref{example:stochgd,example:stochfb}. Then we can obtain from \eqref{eq:convergence-result-main-h-stoch} convergence rates for $\E[\norm{\nextu-\realoptu}^2]$.
\end{remark}


\begin{demonstration}[Proof of convergence of stochastic gradient descent and forward--backward splitting]\cbstart
    We take as the testing operator $\Test_{i+1} \defeq I$. Then, since $\Test_{i+1}\Precond_{i+1} \equiv I$, \eqref{eq:convergence-fundamental-condition-iter-h-stoch} expands as
    \begin{equation}
        \label{eq:convergence-fundamental-condition-iter-h-stochgd}
        \E[\tau_i\iprod{\subdiff G(\nextu)+\grad J(\thisu)}{\nextu-\realoptu}_{\Pi_{S(i)}}]
        \ge 
        - \E\left[\frac{1}{2}\norm{\nextu-\thisu}^2\right]
        - \E[\Penalty_{i+1}(\realoptu)].
    \end{equation}
    From the decomposition $G=\sum_{j=1}^m G_{j} \circ P_j$ and the convexity of $G_{j}$, we observe that
    \[
        \begin{split}
        \tau_i\iprod{\subdiff G(\nextu)}{\nextu-\realoptu}_{\Pi_{S(i)}}
        &
        =
        \sum_{j=1}^m \tau_i\inv\pi_{j,i}\chi_{S(i)}(j)\iprod{\subdiff G_{j}(P_j \nextu)}{P_j(\nextu-\realoptu)}
        \\
        &
        \ge
        \sum_{j=1}^m \tau_i\inv\pi_{j,i}\chi_{S(i)}(j)(G_{j}(P_j \nextu)-G_{j}(P_j \realoptu)).
        \end{split}
    \]
    Since $\tau_i$ is deterministic and $\E[\inv\pi_{j,i}\chi_{S(i)}(j)P_j]=\E[\Pi_{S(i)}]=I$, such that $\sum_{i=0}^{N-1} \E[\tau_i \inv\pi_{j,i}\chi_{S(i)}(j)P_j] = \zeta_N$ for all $j=1,\ldots,m$, by Jensen's inequality, therefore,
    \begin{equation}
        \label{eq:g-ergodic-convexity-stochgd}
        \sum_{i=0}^{N-1} \E[\tau_i\iprod{\subdiff G(\nextu)}{\nextu-\realoptu}_{\Pi_{S(i)}}]
        \ge
        \zeta_N\left(G(\tilde u_N)-G(\realoptu)\right).
    \end{equation}
    If we show the ergodic three-point smoothness condition
    \begin{equation}
        \label{eq:j-smooth-ergodic-stochgd}
        J(\realoptu)-
        J(\tilde u_N)
        \ge
        \sum_{i=0}^{N-1} \E\bigl[\inv\zeta_N\tau_i\iprod{\grad  J(\thisu)}{\realoptu-\nextu}_{\Pi_{S(i)}}
        -\frac{L_{S(i)}\inv\zeta_N\tau_i}{2}\norm{\nextu-\thisu}_{\Pi_{S(i)}}^2\bigr],
    \end{equation}     
    then using our assumption $\tau_i L_{S(i)} \le \pi_{j,i}$ and \eqref{eq:g-ergodic-convexity-stochgd}, we verify \eqref{eq:convergence-fundamental-condition-iter-h-stochgd}, hence \eqref{eq:convergence-fundamental-condition-iter-h-stoch}, for some $\Delta_{i+1}(\realoptu)$ such that
    \[
        \sum_{i=0}^{N-1} \E[\Delta_{i+1}(\realoptu)] =
        -\zeta_N\left(G(\tilde u_N)-G(\realoptu)\right).
    \]
    Since $\zeta_N \ge \epsilon N$ by our assumption $\tau_i \ge \epsilon$, \cref{cor:convergence-result-main-h-stoch} now shows the $O(1/N)$ convergences of function values for the ergodic sequence $\{\tilde u_N\}_{N \ge 1}$.
    \cbend

    To prove \eqref{eq:j-smooth-ergodic-stochgd}, from \eqref{eq:pi-smooth} with $h \defeq \nextu-\thisu$ and $\nexxt{\bar u} \defeq (I-\Pi_{S(i)})\thisu+\Pi_{S(i)}\nextu$ we have
    \begin{equation}
        \label{eq:j-smooth-ergodic-deriv1-stochgd}
        J(\thisu)-J(\nexxt{\bar u}) \ge \iprod{\grad J(\thisu)}{\thisu-\nextu}_{\Pi_{S(i)}}
            -\frac{L_{S(i)}}{2}\norm{\nextu-\thisu}^2_{\Pi_{S(i)}}.
    \end{equation} 
    By convexity, we also have
    \begin{equation}
        \label{eq:j-smooth-ergodic-deriv2-stochgd}
        \begin{split}
        J(\realoptu)-J(\thisu) & \ge 
        \iprod{\grad J(\thisu)}{\realoptu-\thisu}
        =\iprod{\grad J(\thisu)}{\realoptu-\thisu}_{\E[\Pi_{S(i)}|i]}
        \\ &
        =\E[\iprod{\grad J(\thisu)}{\realoptu-\thisu}_{\Pi_{S(i)}}|i].
        \end{split}
    \end{equation}
    Summing \eqref{eq:j-smooth-ergodic-deriv1-stochgd} and \eqref{eq:j-smooth-ergodic-deriv2-stochgd}, multiplying by $\tilde\tau_i$, and taking the expectation, 
    \begin{equation}
        \label{eq:j-smooth-erogdic-deriv3-stochgd}
        J(\realoptu)-\E[\tau_i J(\nexxt{\bar u})]
        \ge
        \E\bigl[\tau_i\iprod{\grad  J(\thisu)}{\realoptu-\nextu}_{\Pi_{S(i)}}
        -\frac{L_{S(i)}\tau_i}{2}\norm{\nextu-\thisu}_{\Pi_{S(i)}}^2\bigr].
    \end{equation}
    Since $\sum_{i=0}^{N-1} \tau_i=\zeta_N$, Jensen's inequality shows
    \[
        \sum_{i=0}^{N-1}\E[\inv\zeta_N\tau_i J(\nexxt{\bar u})]
        \ge
        J\Biggl(\sum_{i=0}^{N-1}\E[\inv\zeta_N\tau_i\nexxt{\bar u}]\Biggr)
        \ge
        J\Biggl(\sum_{i=0}^{N-1} \E[\inv\zeta_N\tau_i\Pi_{S(i)}\nextu]\Biggr)
        =J(\tilde u_N).
    \]
    Therefore, summing \eqref{eq:j-smooth-erogdic-deriv3-stochgd} over $i=0,\ldots,N-1$ verifies \eqref{eq:j-smooth-ergodic-stochgd}.
\end{demonstration}


\begin{example}[Stochastic Newton's method]
    \label{example:newton-stoch}
    Suppose $(P_1,\ldots,P_m) \in \mathcal{P}(\Space)$ and $J \in C^2(\Space)$. 
    Take $H=\grad J$, $\Step_{i+1} \defeq P_{S(i)}$, and
    \begin{equation*}
        \BregFn_{i+1}(u)
            \defeq [\grad^2 J(\thisu)-(I-P_{S(i)})\grad^2 J(\thisu)P_{S(i)}](u-\thisu)
            + P_{S(i)}[\grad J(\thisu) - \grad J(u)].
    \end{equation*}
    Then \eqref{eq:pp} reads
    \begin{equation*}
        0 = P_{S(i)} \grad J(\thisu) + [\grad^2 J(\thisu)]_{S(i)}(\nextu - \thisu) + [\grad^2 J(\thisu)]_{S(i)^c}(\nextu-\thisu),
    \end{equation*}
    where we abbreviate $A_{S(i)} \defeq P_{S(i)}AP_{S(i)}$.
    We get
    \[
        \nextu = \thisu + \pinv{[\grad^2 J(u)]_{S(i)}}\grad J(\thisu),
    \]
    where we define $\pinv A_{S(i)}$ to satisfy $\pinv A_{S(i)}=P_{S(i)}\pinv A_{S(i)}P_{S(i)}$ and $A_{S(i)} \pinv A_{S(i)} = \pinv A_{S(i)} A_{S(i)} = P_{S(i)}$.
    This is a variant of stochastic Newton's method and ``sketching'' \cite{qu2015sdna,pilanci2016iterative}.
    Notice how $\pinv{[\grad^2 J(u)]_{S(i)}}$ can be significantly cheaper to compute than $\inv{[\grad^2 J(u)]}$.

    \cbstart
    Let
    \begin{equation}
        \label{eq:stochgd-delta-j}
        \delta_J \defeq \inf\left\{\delta \ge 0 \,\middle|\,
            \begin{array}{r}
                 (1-\delta)\grad^2 J(\eta) \le \grad^2 J(\zeta) \le (1+\delta)\grad^2 J(\eta) \\
                 \text{ for all } \eta, \zeta \in \Space
            \end{array}
            \right\},
    \end{equation}
    as well as
    \begin{equation}
        \label{eq:stochgd-bar-p}
        \bar p \defeq \sup\left\{
            \bar p \in (0, 1]
            \,\middle|\,
            \begin{array}{r}            
                \E[(I-P_{S(i)})\grad^2 J(\zeta)(I-P_{S(i)})|i] \le (1-\bar p)\grad^2 J(\zeta)
                \\
                \text{ for all } \zeta \in \Space
                \text{ and iterations } i \in \N
            \end{array}
            \right\}.
    \end{equation}   
    If $0 \le \delta_J < \frac{3 - \sqrt{9-8\bar p}}{4}$, then $\E[\norm{u^N-\realoptu}^2] \to 0$ at a linear rate. 
    \cbend
\end{example}

\begin{remark}
    If $J(u)=\iprod{u}{Au-c}$ for some self-adjoint positive definite $A \in \linear(\Space; \Space)$ and $c \in \Space$, then $\delta_J=0$, so the upper bound on $\delta_J$ is satisfied for any $\bar p \in (0, 1]$.
    If $\E[P_{S(i)}|i] \equiv p I$ for some $p>1/2$, then $\bar p>0$ due to
    \[
        \E[(I-P_{S(i)})\grad^2 J(\zeta)(I-P_{S(i)})|i]
        =
        (1-2p)\grad^2 J(\zeta)+\E[P_{S(i)}\grad^2 J(\zeta)P_{S(i)}|i] \le
        2(1-p)\grad ^2 J(\zeta).
    \]
\end{remark}

An advantage of our techniques is the immediate convergence of:

\begin{example}[Stochastic proximal Newton's method]
    Let $(P_1,\ldots,P_m) \in \mathcal{P}(\Space)$.
    Let $H=\subdiff G+\grad J$ for $G \in \convex (\Space)$ and $J \in C^2(X)$ with $G=\sum_{j=1}^m G_{j} \circ P_j$.
    Take $\Precond_{i+1}$, $\Step_{i+1}$, and $\AltBregFn_{i+1}$ as in \cref{example:newton-stoch}.
    Then we obtain the algorithm
    \begin{equation*}
        \nextu \defeq \inv{(I+\pinv{[\grad^2 J(u)]_{S(i)}}P \subdiff G)}\bigl(\thisu - \pinv{[\grad^2 J(u)]_{S(i)}} \grad J(\thisu)\bigr).
    \end{equation*}
    We have $\E[\norm{u^N-\realoptu}^2] \to 0$ at a linear rate under the same conditions as in \cref{example:newton-stoch}.
\end{example}

\begin{demonstration}[Proof of convergence of stochastic Newton's and proximal Newton's methods]
    \cbstart
    We set as the preconditioner $\Precond_{i+1} \defeq \grad^2 J(\thisu)$ and as the test $\Test_i \defeq \tauTest_i I$ for some $\tauTest_i > 0$.
    Clearly we have the following simpler non-value version of the value estimate \eqref{eq:g-ergodic-convexity-stochgd}:
    \begin{multline}
        \label{eq:stoch-newton-g0-monotone}
        \iprod{\subdiff G(\nextu)-\subdiff G(\realoptu)}{\nextu-\realoptu}_{\Test_{i+1}\Step_{i+1}}
        =
        \tauTest_i \iprod{\subdiff G(\nextu)-\subdiff G(\realoptu)}{\nextu-\realoptu}_{P_{S(i)}}
        \\
        =
        \sum_{j=1}^m \tauTest_i \chi_{S(i)}(j) \iprod{\subdiff G_{j}(P_j\nextu)-\subdiff G_{j}(P_j\realoptu)}{P_j(\nextu-\realoptu)}
        \ge
        0.
     \end{multline}
    Therefore, since $0 \in \subdiff G(\realoptu)+\grad J(\realoptu)$, the expected fundamental condition \cref{eq:convergence-fundamental-condition-iter-h-stoch} becomes
    \begin{equation}
        \label{eq:convergence-fundamental-condition-iter-h-newton-expected}
        \E[\tauTest_i D_{i+1}+\Penalty_{i+1}(\realoptu)]
        \ge  
        \E\left[
        \frac{1}{2}\norm{\nextu-\realoptu}_{\tauTest_{i+1}\grad^2 J(\nextu)-\tauTest_i\grad^2 J(\thisu)}^2
        - \frac{1}{2}\norm{\nextu-\thisu}_{\tauTest_i\grad^2 J(\thisu)}^2\right].
    \end{equation}
    for
    \begin{equation*}
        D_{i+1}
        \defeq
        \iprod{\grad J(\thisu)-\grad J(\realoptu)}{\nextu-\realoptu}_{P_{S(i)}}
        -\iprod{(I-P_{S(i)})\grad^2 J(\thisu)P_{S(i)}(\nextu-\thisu)}{\nextu-\realoptu}.
    \end{equation*}
    Adapting the argumentation of \cref{lemma:c2-smoothness,lemma:c2x-smoothness} to the present projected setting, by the mean value theorem, for some $\zeta$ between $\thisu$ and $\realoptu$, and using the definition of $\delta_J$ in \eqref{eq:stochgd-delta-j} and the three-point identity \eqref{eq:standard-identity}, we rearrange
    \begin{equation*}
        \begin{split}
        D_{i+1} &
        =
        \iprod{\grad^2 J(\thisu)(\thisu-\realoptu)}{\nextu-\realoptu}_{P_{S(i)}}
        +
        \iprod{[\grad^2 J(\zeta)-\grad^2 J(\thisu)](\thisu-\realoptu)}{\nextu-\realoptu}_{P_{S(i)}}
        \\
        \MoveEqLeft[-1]
        -\iprod{(I-P_{S(i)})\grad^2 J(\thisu)P_{S(i)}(\nextu-\thisu)}{\nextu-\realoptu}
        \\
        &
        =
        \iprod{\grad^2 J(\thisu)(\thisu-\realoptu)}{\nextu-\realoptu}
        +
        \iprod{[\grad^2 J(\zeta)-\grad^2 J(\thisu)](\thisu-\realoptu)}{\nextu-\realoptu}_{P_{S(i)}}
        \\
        \MoveEqLeft[-1]
        -\iprod{(I-P_{S(i)})\grad^2 J(\thisu)P_{S(i)}(\nextu-\realoptu)}{\nextu-\realoptu}
        \\
        &
        =
        \frac{1}{2}\norm{\nextu-\realoptu}_{\grad^2 J(\thisu)}^2
        +\frac{1}{2}\norm{\thisu-\realoptu}_{\grad^2 J(\thisu)}^2
        -\frac{1}{2}\norm{\nextu-\thisu}_{\grad^2 J(\thisu)}^2
        \\
        \MoveEqLeft[-1]
        +
        \iprod{[\grad^2 J(\zeta)-\grad^2 J(\thisu)](\thisu-\realoptu)}{\nextu-\realoptu}_{P_{S(i)}}
        \\
        \MoveEqLeft[-1]
        -\iprod{(I-P_{S(i)})\grad^2 J(\thisu)P_{S(i)}(\nextu-\realoptu)}{\thisu-\realoptu}.
        \end{split}
    \end{equation*}
    By the definition of $\bar p$ in \eqref{eq:stochgd-bar-p} and by Cauchy's inequality, for any $\alpha>0$, we obtain the expected three-point inequality
    \begin{equation*}
        \begin{split}
        \E[D_{i+1}]
        &
        \ge
        \E\Bigl[
        \frac{1-\delta_J-\inv\alpha}{2}\norm{\nextu-\realoptu}_{\grad^2 J(\thisu)}^2
        +\frac{1-\delta_J-\alpha(1-\bar p)}{2}\norm{\thisu-\realoptu}_{\grad^2 J(\thisu)}^2
        \\
        \MoveEqLeft[-2]
        -\frac{1}{2}\norm{\nextu-\thisu}_{\grad^2 J(\thisu)}^2
        \Bigr].
        \end{split}
    \end{equation*}

    We take $\alpha=(1-\delta_J)/(1-\bar p)$.
    Then \eqref{eq:convergence-fundamental-condition-iter-h-newton-expected} holds when
    \[
        \E[\Penalty_{i+1}(\realoptu)]
        \ge        
        \E\Bigl[\frac{1}{2}\norm{\nextu-\realoptu}_{\tauTest_{i+1}\grad^2 J(\nextu)-\tauTest_i(2-\delta_J-\inv\alpha)\grad^2 J(\thisu)}^2
        \Bigr].
    \]
    This is the case for some $\Delta_{i+1}(\realoptu) \in \Random(\R)$ with $\E[\Delta_{i+1}(\realoptu)]=0$ provided $2 > \delta_J + \inv\alpha$ and $\tauTest_{i+1}>0$ is small enough that $\tauTest_{i+1}\grad^2 J(\nextu) \le \tauTest_i(2-\delta_J-\inv\alpha)\grad^2 J(\thisu)$. Due to \eqref{eq:stochgd-delta-j}, we can take $\tauTest_{i+1} \ge \tauTest_i \kappa$ for
    \[
        \kappa \defeq \frac{2-\delta_J-\inv\alpha}{1+\delta_J}=
        \frac{2-\delta_J-\frac{1-\bar p}{1-\delta_J}}{1+\delta_J}
        =\frac{1+\bar p-3\delta_J+\delta_J^2}{1-\delta_J^2}.
    \]
    In particular, we obtain exponential growth of $\{\tauTest_i\}_{k \in \N}$ provided $\kappa>1$, which holds when $-3\delta_J + 2\delta_J^2 + \bar p > 0$, which is the case under our assumption $0 \le \delta_J < \frac{3 - \sqrt{9-8\bar p}}{4}$.
    Consequently, we can take $\tauTest_i \defeq \kappa^i/(1-\delta_J)$ for $\kappa >1$.
    By \cref{cor:convergence-result-main-h-stoch} we have
    \begin{equation*}
        \E\left[\frac{1}{2}\norm{u^N-\realoptu}^2_{\Test_{N+1}\Precond_{N+1}}\right]
        \le
        \E\left[
        \frac{1}{2}\norm{u^0-\realoptu}^2_{\Test_{1}\Precond_{1}}
        \right]
        \quad
        (N \ge 1).
    \end{equation*}
    Since $\Test_{N+1}\Precond_{N+1} =\tauTest_N\grad^2 J(\thisu) \ge \kappa^N \grad^2 J(\realoptu)$, we obtain the claimed linear expected convergence of iterates.
    %
\end{demonstration}

\cbend

\begin{remark}[Variance estimates]
    From an estimate of the type $\E[\norm{u^N-\realoptu}^2] \le C_N$, as above, Jensen's inequality gives $\norm{\E[u^N]-\realoptu}^2 \le C_N$. From this, with the application of the triangle and Cauchy's inequalities, it is easy to derive the variance estimate $\E[\norm{\E[u^N]-u^N}^2] \le 4C_N$.
\end{remark}

\section{Saddle point problems}
\label{sec:saddle}

\cbstart
We now momentarily forget the stochastic setting and ergodic estimates to which we will return in \cref{sec:gap}, and introduce our overall approach to primal--dual methods for saddle-point problems. \cbend
With $K \in \linear(X; Y)$; $G,J \in \convex(X)$; and $F^* \in \convex(Y)$ on Hilbert spaces $X$ and $Y$, we now wish to solve the following version of \eqref{eq:saddle}.
The first-order necessary optimality conditions read
\begin{equation*}
    -K^* \realopty \in \subdiff [G+J](\realoptx),
    \quad\text{and}\quad
    K \realoptx \in \subdiff F^*(\realopty).
\end{equation*}
Setting $\Space \defeq X \times Y$ and introducing the variable splitting notation $u=(x, y)$, $\realoptu=(\realoptx,\realopty)$, etc., this can succinctly be written as $0 \in H(\realoptu)$ in terms of the operator 
\begin{equation}
    \label{eq:h}
    H(u) \defeq
        \begin{pmatrix}
            \subdiff [G+J](x) + K^* y \\
            \subdiff F^*(y) -K x
        \end{pmatrix}.
\end{equation}
In this section, concentrating on this specific $H$, we specialise the theory of \cref{sec:convergence-basic} to saddle point problems.
Throughout, for some primal and dual step length and testing operators $\Tau_i,\TauTest_i \in \linear(X; X)$, and $\Sigma_{i+1}, \SigmaTest_{i+1} \in \linear(Y; Y)$, we take
\begin{equation}
    \label{eq:test}
    \Step_{i+1} \defeq
    \begin{pmatrix}
        \Tau_i & 0 \\
        0 & \Sigma_{i+1}
    \end{pmatrix},
    \quad\text{and}\quad
    \Test_{i+1} \defeq
        \begin{pmatrix}
            \TauTest_i & 0 \\
            0 & \SigmaTest_{i+1}
        \end{pmatrix}.
\end{equation}

To work with arbitrary step length operators, which will be necessary for stochastic algorithms in \cref{sec:stochastic-examples}, as well as the partially accelerated algorithms of \cite{tuomov-cpaccel}, we will need abstract forms of partial strong monotonicity of $G$ and $F^*$. As a first step, we take subspaces of operators
\[
    \TauMonotoneSpace \subset \linear(X; X),
    \quad\text{and}\quad
    \SigmaMonotoneSpace \subset \linear(Y; Y).
\]
We suppose that $\subdiff G$ is \emph{partially (strongly) $\TauMonotoneSpace$-monotone}, which we take to mean
\begin{equation}
    \label{eq:g-strong-monotone}
    \tag{G-PM}
    \iprod{\subdiff G(x') - \subdiff G(x)}{x'-x}_{\TauTilde} \ge 
        \norm{x'-x}_{\TauTilde\Gamma}^2,
    \quad (x, x' \in X;\, \TauTilde \in \TauMonotoneSpace)
\end{equation}
for some linear operator $0 \le \Gamma \in \linear(X; X)$. The operator $\TauTilde \in \TauMonotoneSpace$ acts as a testing operator. \cbstart Observe that we have already proven this in  \eqref{eq:stoch-newton-g0-monotone} for the setting of the stochastic Newton's method. \cbend
Similarly, we assume that $\subdiff F^*$ is \emph{$\SigmaMonotoneSpace$-monotone} in the sense 
\begin{equation}
    \label{eq:f-monotone}
    \tag{F$^*$-PM}
    \iprod{\subdiff F^*(y') - \subdiff F^*(y) }{y'-y}_\SigmaTilde \ge
        0 
    \quad (y, y' \in Y;\, \SigmaTilde \in \SigmaMonotoneSpace).
\end{equation}
Regarding $J$, we assume that $\grad J$ exists and is \emph{partially $\TauMonotoneSpace$-co-coercive} in the sense that for some $L \ge 0$ holds
\begin{equation}
    \label{eq:j-coco}
    \tag{J-PC}
    \iprod{\grad J(x') - \grad J(x)}{x'-x}_{\TauTilde} \ge 
        \inv L \norm{\grad J(x')-\grad J(x)}_{\TauTilde}^2,
    \quad (x, x' \in X;\, \TauTilde \in \TauMonotoneSpace).
\end{equation}
(We allow $L=0$ for the case $J=0$.)

We also introduce
\begin{equation}
    \label{eq:gammalift-q}
    \GammaLift{i+1}(\Gamma) \defeq \begin{pmatrix}
        2\Tau_i\Gamma  & 2\Tau_i K^* \\
        -2\Sigma_{i+1}K & 0 
    \end{pmatrix},
    \quad\text{and}\quad
    Q_{i+1}(L) \defeq \begin{pmatrix} L\Tau_i & 0 \\ 0 & 0 \end{pmatrix},
\end{equation}
which are operator measures of strong monotonicity and smoothness of $H$.
Finally, we introduce the forward--step preconditioner with respect to $J$, familiar from \cref{example:graddesc} as
\begin{equation}
    \label{eq:v-j}
    \BregFn_{i+1}^J(u) \defeq \begin{pmatrix}
        \Tau_i(\grad J(\thisx)-\grad J(x)) \\
        0
        \end{pmatrix}.
\end{equation}

\begin{example}[Block-separable structure, monotonicity]
	\label{example:separable}
	Let $P_1,\ldots,P_m$ be projection operators in $X$ with $\sum_{j=1}^m P_j = I$ and $P_jP_i=0$ if $i \ne j$.
    Suppose $G_1,\ldots,G_m \in \convex(X)$ are (strongly) convex with factors $\gamma_1,\ldots,\gamma_m \ge 0$.
    Then the partial strong monotonicity \eqref{eq:g-strong-monotone} holds with $\Gamma = \sum_{j=1}^m \gamma_j P_j$ for
	\begin{align}
	    \label{eq:g-tau-separable}
	    G(x) &= \sum_{j=1}^m G_j(P_j x),
	    \quad\text{and}\quad
	    \mathcal{T} = \biggl\{
	    	\Tau \defeq \sum_{j \in S} t_j P_j
	    	\biggm|
	    	t_j > 0, \, S \subset \{1,\ldots,m\}
	    \biggr\}.%
	\end{align}
\end{example}

\subsection{Estimates}

Using the (strong) $\TauMonotoneSpace$-monotonicity of $\subdiff G$, and the $\TauMonotoneSpace$-co-coercivity of $\grad J$, the next lemma simplifies \cref{cor:convergence-result-main-h-alt} for $H$ given by \eqref{eq:h}.
We introduce $\tilde\Gamma=\Gamma$ to facilitate later gap estimates that will require the conditions in the lemma to hold for $\tilde\Gamma=\Gamma/2$ instead of $\tilde\Gamma=\Gamma$.

\begin{theorem}
    \label{thm:convergence-result-saddle}
    Let $H$ have the structure \eqref{eq:h} and assume $\realoptu \in \inv H(0)$.
    Suppose $G$ satisfies the partial strong monotonicity \eqref{eq:g-strong-monotone} for some $0 \le \Gamma \in \linear(X; X)$,  $F^*$ similarly satisfies \eqref{eq:f-monotone}, and $J$ satisfies the partial co-coercivity \eqref{eq:j-coco} for some $L \ge 0$.
	For each $i \in \N$, let $\Tau_{i}, \TauTest_i \in \L(X;X)$ and $\Sigma_{i+1}, \SigmaTest_{i+1} \in \L(Y; Y)$ be such that $\TauTest_i\Tau_i \in \TauMonotoneSpace$ and $\SigmaTest_{i+1}\Sigma_{i+1} \in \SigmaMonotoneSpace$.
    Define $\Test_{i+1}$ and $\Step_{i+1}$ through \eqref{eq:test}.
    Also take $\AltBregFn_{i+1}: X \times Y \to X \times Y$, and $\Precond_{i+1} \in \linear(X \times Y; X \times Y)$.
    Suppose \eqref{eq:pp} is solvable for $\{\nextu\}_{i \in \N} \subset X \times Y$.
    Then the fundamental conditions \eqref{eq:convergence-fundamental-condition-iter-h-alt}, \eqref{eq:convergence-fundamental-condition-iter-h} and the descent inequality \eqref{eq:convergence-result-main-h} hold if \cbstart for all $i \in \N$, the operator \cbend $\Test_{i+1}\Precond_{i+1}$ is self-adjoint and for $\tilde\Gamma=\Gamma$ and $L_i \equiv L/2$ we have the
    \term{fundamental inequality for saddle-point problems}
    \begin{multline}
        \label{eq:convergence-fundamental-condition-iter}
        \tag{CI-$\Gamma$}
        \frac{1}{2}\norm{\nextu-\thisu}_{\Test_{i+1}(\Precond_{i+1}-Q_{i+1}(L_i))}^2
        + \frac{1}{2}\norm{\nextu-\realoptu}_{\Test_{i+1}(\GammaLift{i+1}(\tilde\Gamma)+\Precond_{i+1})-\Test_{i+2}\Precond_{i+2}}^2
        \\
        + \iprod{\AltBregFn_{i+1}(\nextu)-\BregFn_{i+1}^J(\nextu)}{\nextu - \realoptu}_{\Test_{i+1}}
        \ge
        - \Penalty_{i+1}(\realoptu).
    \end{multline}
\end{theorem}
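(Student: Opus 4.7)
My plan is to specialize \cref{cor:convergence-result-main-h-alt}: it suffices to verify the fundamental condition \eqref{eq:convergence-fundamental-condition-iter-h-alt}, from which the descent inequality \eqref{eq:convergence-result-main-h} follows automatically. I begin by expanding
\[
    S \defeq \iprod{\Step_{i+1}[H(\nextu)-H(\realoptu)] + \AltBregFn_{i+1}(\nextu)}{\nextu-\realoptu}_{\Test_{i+1}}
\]
using the block-diagonal forms \eqref{eq:test} of $\Step_{i+1}$ and $\Test_{i+1}$ together with the decomposition $\subdiff[G+J]=\subdiff G+\grad J$ inside $H$. This produces five summands. Applied with $\TauTilde \defeq \TauTest_i\Tau_i \in \TauMonotoneSpace$ and $\SigmaTilde \defeq \SigmaTest_{i+1}\Sigma_{i+1} \in \SigmaMonotoneSpace$, the partial monotonicities \eqref{eq:g-strong-monotone} and \eqref{eq:f-monotone} bound the $\subdiff G$ summand below by $\norm{\nextx-\realoptx}^2_{\TauTest_i\Tau_i\Gamma}$ and discard the $\subdiff F^*$ summand. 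Inspecting \eqref{eq:gammalift-q}, this $\Gamma$ quadratic combines with the two linear coupling pieces from $K$ and $K^*$ to give exactly $\tfrac{1}{2}\norm{\nextu-\realoptu}^2_{\Test_{i+1}\GammaLift{i+1}(\Gamma)}$.

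The remaining $\grad J$ summand I manipulate by adding and subtracting $\iprod{\BregFn_{i+1}^J(\nextu)}{\nextu-\realoptu}_{\Test_{i+1}}$. Since the primal block of $\BregFn_{i+1}^J(\nextu)$ from \eqref{eq:v-j} equals $\Tau_i[\grad J(\thisx)-\grad J(\nextx)]$, the $\grad J(\nextx)$ contributions telescope and leave the three-point quantity $\iprod{\grad J(\thisx)-\grad J(\realoptx)}{\nextx-\realoptx}_{\TauTest_i\Tau_i}$. Splitting $\nextx-\realoptx=(\thisx-\realoptx)+(\nextx-\thisx)$ and bounding the first resulting inner product by the partial co-coercivity \eqref{eq:j-coco}, while applying Young's inequality to the second (calibrated so that the co-coercivity reserve exactly absorbs the Young cross-term in $\norm{\grad J(\thisx)-\grad J(\realoptx)}^2$), produces the lower bound $-\tfrac{L}{4}\norm{\nextx-\thisx}^2_{\TauTest_i\Tau_i}$, which by definition of $Q_{i+1}$ in \eqref{eq:gammalift-q} is precisely $-\tfrac{1}{2}\norm{\nextu-\thisu}^2_{\Test_{i+1}Q_{i+1}(L/2)}$.

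Assembling the estimates yields
\[
    S \ge \tfrac{1}{2}\norm{\nextu-\realoptu}^2_{\Test_{i+1}\GammaLift{i+1}(\Gamma)} - \tfrac{1}{2}\norm{\nextu-\thisu}^2_{\Test_{i+1}Q_{i+1}(L/2)} + \iprod{\AltBregFn_{i+1}(\nextu)-\BregFn_{i+1}^J(\nextu)}{\nextu-\realoptu}_{\Test_{i+1}}.
\]
The hypothesis \eqref{eq:convergence-fundamental-condition-iter} with $\tilde\Gamma=\Gamma$ and $L_i=L/2$ furnishes a lower bound on this last inner product; substituting it and rearranging, the $\Test_{i+1}\GammaLift{i+1}(\Gamma)$ and $\Test_{i+1}Q_{i+1}(L/2)$ quadratics cancel identically, and what remains is exactly the right-hand side of \eqref{eq:convergence-fundamental-condition-iter-h-alt}.

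The main obstacle is algebraic bookkeeping: the testing, step, and preconditioning operators need not commute, so at each stage I must verify that only the symmetric parts of products such as $\TauTest_i\Tau_i$ and $\Test_{i+1}\GammaLift{i+1}(\Gamma)$ actually enter the quadratic forms produced by the expansion, and that the Young-type estimate used on $\grad J$ transfers correctly from the scalar case to these operator-weighted brackets under the standing self-adjointness of $\Test_{i+1}\Precond_{i+1}$ together with the monotonicity-space conditions $\TauTest_i\Tau_i \in \TauMonotoneSpace$ and $\SigmaTest_{i+1}\Sigma_{i+1} \in \SigmaMonotoneSpace$.
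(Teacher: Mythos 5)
Your proposal is correct and follows essentially the same route as the paper's proof. Both reduce the claim to the fundamental condition \eqref{eq:convergence-fundamental-condition-iter-h-alt} via \cref{cor:convergence-result-main-h-alt}, use the partial strong monotonicity \eqref{eq:g-strong-monotone} and monotonicity \eqref{eq:f-monotone} on the $\subdiff G$ and $\subdiff F^*$ blocks, derive the operator-weighted three-point hypomonotonicity bound $-\tfrac{L}{4}\norm{\nextx-\thisx}^2_{\TauTest_i\Tau_i}$ from the partial co-coercivity \eqref{eq:j-coco} exactly as in \cref{lemma:smoothness}, and recognise the $K$-coupling together with the $\Gamma$ quadratic as $\tfrac{1}{2}\norm{\nextu-\realoptu}^2_{\Test_{i+1}\GammaLift{i+1}(\Gamma)}$. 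The only difference is presentational: the paper passes through the intermediate estimate \eqref{eq:convergence-fundamental-condition-iter0} before expanding $\iprod{H(\nextu)-H(\realoptu)}{\nextu-\realoptu}_{\Test_{i+1}\Step_{i+1}}$, whereas you directly lower-bound the left-hand side of \eqref{eq:convergence-fundamental-condition-iter-h-alt} by the left-hand side of \eqref{eq:convergence-fundamental-condition-iter} plus the two quadratics that cancel upon substitution.
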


We have introduced $\tilde\Gamma$ and $L_i$ for later gap estimates, where the specific choices of these will differ by a factor of two, similarly to the differences in the step length bounds for the function value estimates of \cref{sec:value} compared to the non-value estimates of \cref{sec:examples-basic}.

\begin{proof}
    Note that $\Test_{i+1}\Precond_{i+1}$ being self-adjoint implies that so is $\TauTest_i\Tau_i$.
    Using \eqref{eq:j-coco}, similarly to \cref{lemma:smoothness} we derive
    \begin{equation*}
        \iprod{\grad J(\thisx)-  \grad J(\realoptx)}{\nextx-\realoptx}_{\TauTest_i\Tau_i}
        \ge
        -\frac{L}{4}\norm{\nextx-\thisx}^2_{\TauTest_i\Tau_i}.
    \end{equation*}
    Using \eqref{eq:v-j}, therefore
    \begin{equation*}
    	\iprod{\BregFn_{i+1}^J(\nextu)}{\nextu - \realoptu}_{\Test_{i+1}}
    	\ge
    	-\frac{L}{4}\norm{\nextx-\thisx}^2_{\TauTest_i\Tau_i}
    	-\iprod{\grad J(\nextx)-  \grad J(\realoptx)}{\nextx-\realoptx}_{\TauTest_i\Tau_i}.
    \end{equation*}
	With this, \eqref{eq:g-strong-monotone}, and \eqref{eq:f-monotone}, we observe \eqref{eq:convergence-fundamental-condition-iter} to imply
    \begin{multline}
        \label{eq:convergence-fundamental-condition-iter0}
        \frac{1}{2}\norm{\nextu-\thisu}^2_{\Test_{i+1}\Precond_{i+1}}
        + \frac{1}{2}\norm{\nextu-\realoptu}_{\Test_{i+1}(\GammaLift{i+1}(0)+\Precond_{i+1})-\Test_{i+2}\Precond_{i+2}}^2
        \\
        +\iprod{\subdiff [G+J](\nextx)-\subdiff [G+J](\realoptx)}{\nextx - \realoptx}_{\TauTest_{i}\Tau_{i}}
        +
        \iprod{\subdiff F^*(\nexty)-\subdiff F^*(\realopty)}{\nexty - \realopty}_{\SigmaTest_{i+1}\Sigma_{i+1}}
        \\
        + \iprod{\AltBregFn_{i+1}(\nextu)}{\nextu-\realoptu}_{\Test_{i+1}}
        \ge
        - \Penalty_{i+1}(\realoptu).
    \end{multline}
    Here pay attention to the fact that \eqref{eq:convergence-fundamental-condition-iter0} employs $\GammaLift{i+1}(0)$ while \eqref{eq:convergence-fundamental-condition-iter} employs $\GammaLift{i+1}(\tilde\Gamma)$.
    If we show that \eqref{eq:convergence-fundamental-condition-iter-h-alt} follows from \eqref{eq:convergence-fundamental-condition-iter0}, then the descent inequality \eqref{eq:convergence-result-main-h} follows from \cref{cor:convergence-result-main-h-alt}.
    Indeed, using the expansion
    \begin{equation*}
        \Test_{i+1} \Step_{i+1}
        =
        \begin{pmatrix}
            \TauTest_i \Tau_i & 0 
            \\
            0 & \SigmaTest_{i+1}\Sigma_{i+1}
        \end{pmatrix},
    \end{equation*}
    we expand for any $\optu=(\optx, \opty)$ that
    \begin{equation*}
        \begin{split}
            \iprod{\Test_{i+1} & \Step_{i+1} (H(\nextu) - H(\optu))}{\nextu - \optu} 
            \\
            &
            =\iprod{\subdiff G(\nextx)-\subdiff G(\optx)}{\nextx - \optx}_{\TauTest_i\Tau_i}
             + \iprod{\subdiff F^*(\nexty)-\subdiff F^*(\opty)}{\nexty - \opty}_{\SigmaTest_{i+1}\Sigma_{i+1}}
            \\ & \phantom{ = }
            + \iprod{\TauTest_i\Tau_iK^*(\nexty-\opty)}{\nextx-\optx}
            - \iprod{\SigmaTest_{i+1}\Sigma_{i+1}K(\nextx-\optx)}{\nexty-\opty}.
        \end{split}
    \end{equation*}
    With the help of $\GammaLift{i+1}(0)$ we then obtain
    \begin{multline}%
        \notag%
        \iprod{H(\nextu) - H(\optu)}{\nextu - \optu}_{\Test_{i+1} \Step_{i+1}}
        \ge
        \frac{1}{2}\norm{\nextu-\optu}_{\Test_{i+1}\GammaLift{i+1}(0)}
        \\
        + \iprod{\subdiff G(\nextx)-\subdiff G(\optx)}{\nextx - \optx}_{\TauTest_i\Tau_i}
        + \iprod{\subdiff F^*(\nexty)-\subdiff F^*(\opty)}{\nexty - \opty}_{\SigmaTest_{i+1}\Sigma_{i+1}}.
    \end{multline}%
    Inserting this into 
    \eqref{eq:convergence-fundamental-condition-iter0}, we obtain the fundamental inequality \eqref{eq:convergence-fundamental-condition-iter-h-alt}. It implies \eqref{eq:convergence-fundamental-condition-iter-h} via \cref{cor:convergence-result-main-h-alt}.
    Finally, \cref{thm:convergence-result-main-h} gives \eqref{eq:convergence-result-main-h}.
\end{proof}

\subsection{Examples of primal--dual methods}
\label{sec:pd-example}

\cbstart
We now look at several known methods for the saddle point problem \eqref{eq:saddle}.
The fundamental idea in all of them is to design $\Precond_{i+1}$ such that the primal variable $\nexty$ and the dual variable $\nexty$ can be updated independently unlike in the standard proximal point method with $\Precond_{i+1}=I$.
To help verifying the condition \cref{thm:convergence-result-saddle} for these methods, we reformulate the result for scalar step length and testing parameters: we will only use the full power of the operator setup in our companion paper \cite{tuomov-blockcp}.

If for each $i \in \N$, we pick $\tau_i,\tauTest_i,\sigma_{i+1},\sigmaTest_{i+1} > 0$ and $\gamma \ge 0$, and define $\Tau_i=\tau_i I$, $\TauTest_i=\tauTest_i I$, $\Sigma_{i+1}=\sigma_{i+1} I, \SigmaTest_{i+1}=\sigmaTest_{i+1} I$, and $\Gamma \defeq \gamma I$, then \eqref{eq:test}, \eqref{eq:gammalift-q}, and \eqref{eq:v-j-scalar} reduce to
\begin{subequations}
\label{eq:pd-setup-scalar}
\begin{align}
    \Step_{i+1} & \defeq
    \begin{pmatrix}
        \tau_i I & 0 \\
        0 & \sigma_{i+1} I
    \end{pmatrix},
    &
    \Test_{i+1} & \defeq
        \begin{pmatrix}
            \tauTest_i I & 0 \\
            0 & \SigmaTest_{i+1} I
        \end{pmatrix}.
    \\
    \GammaLift{i+1}(\tilde\Gamma) & \defeq \begin{pmatrix}
        2\tau_i\tilde\gamma  & 2\tau_i K^* \\
        -2\sigma_{i+1}K & 0 
    \end{pmatrix},
    &
    Q_{i+1}(L) &\defeq \begin{pmatrix} L\tau_i I & 0 \\ 0 & 0 \end{pmatrix},
    \quad\text{and}
    \\
    \label{eq:v-j-scalar}
    \BregFn_{i+1}^J(u) & \defeq \begin{pmatrix}
        \tau_i(\grad J(\thisx)-\grad J(x)) \\
        0
        \end{pmatrix}.
\end{align}
\end{subequations}
Then we have the following corollary of \cref{thm:convergence-result-saddle}.

\begin{corollary}
    \label{cor:convergence-result-saddle-nogap}
    Let $H$ have the structure \eqref{eq:h} and assume $\realoptu \in \inv H(0)$.
    Assume that $G$ is ($\gamma$-strongly) convex and $\grad J$ is $L$-Lipschitz for some $\gamma \ge 0$ and $L>0$.
    For each $i \in \N$, assume the structure \eqref{eq:pd-setup-scalar} for $\tau_i,\tauTest_i,\sigma_{i+1},\sigmaTest_{i+1} > 0$.
    Also take $\AltBregFn_{i+1} \in X \times Y \to X \times Y$ and $\Precond_{i+1} \in \linear(X \times Y; X \times Y)$.
    Suppose \eqref{eq:pp} is solvable for $\{\nextu\}_{i \in \N} \subset X \times Y$.
    Suppose for all $i \in \N$ that $\Test_{i+1}\Precond_{i+1}$ is self-adjoint, and that the fundamental condition for saddle-point problems \eqref{eq:convergence-fundamental-condition-iter} holds for $\tilde\Gamma=\gamma I$ and $L_i \equiv L/2$.
    Then the fundamental conditions \eqref{eq:convergence-fundamental-condition-iter-h-alt}, \eqref{eq:convergence-fundamental-condition-iter-h} and the descent inequality \eqref{eq:convergence-result-main-h} hold.
\end{corollary}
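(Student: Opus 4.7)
The plan is to recognize this corollary as a straightforward specialization of \cref{thm:convergence-result-saddle} to the case of scalar step length and testing operators. The work consists of verifying that the abstract monotonicity and co-coercivity hypotheses of the theorem reduce, in the scalar setting, to the familiar pointwise hypotheses stated in the corollary, and then invoking the theorem verbatim.

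First I would take $\TauMonotoneSpace \defeq \{ t I \mid t \ge 0\} \subset \linear(X; X)$ and $\SigmaMonotoneSpace \defeq \{ s I \mid s \ge 0\} \subset \linear(Y; Y)$. With this choice, the scalar products $\TauTest_i \Tau_i = \tauTest_i \tau_i I$ lie in $\TauMonotoneSpace$ and $\SigmaTest_{i+1}\Sigma_{i+1} = \sigmaTest_{i+1}\sigma_{i+1} I$ lies in $\SigmaMonotoneSpace$, as required by \cref{thm:convergence-result-saddle}. The self-adjointness of $\TauTest_i \Tau_i$ and $\SigmaTest_{i+1}\Sigma_{i+1}$ is automatic.

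Next I would verify the three pointwise hypotheses of the theorem. The $\gamma$-strong convexity of $G$ gives $\iprod{\subdiff G(x') - \subdiff G(x)}{x'-x} \ge \gamma \norm{x'-x}^2$, and multiplying through by any scalar $t \ge 0$ yields \eqref{eq:g-strong-monotone} with $\Gamma = \gamma I$ for every $\TauTilde = t I \in \TauMonotoneSpace$. Analogously, monotonicity of $\subdiff F^*$ scaled by $s \ge 0$ gives \eqref{eq:f-monotone}. For \eqref{eq:j-coco}, convexity of $J$ together with $L$-Lipschitzness of $\grad J$ yields the Baillon--Haddad inequality $\iprod{\grad J(x') - \grad J(x)}{x'-x} \ge L^{-1}\norm{\grad J(x')-\grad J(x)}^2$, which upon scaling by $t \ge 0$ gives \eqref{eq:j-coco}.

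Finally, the reduced operators in \eqref{eq:pd-setup-scalar} are by construction just $\Step_{i+1}$, $\Test_{i+1}$, $\GammaLift{i+1}(\tilde\Gamma)$, $Q_{i+1}(L)$, and $\BregFn_{i+1}^J$ of \cref{thm:convergence-result-saddle} evaluated at these scalar operators and at $\Gamma = \gamma I$. Consequently the fundamental inequality \eqref{eq:convergence-fundamental-condition-iter} assumed here with $\tilde\Gamma = \gamma I$ and $L_i \equiv L/2$ is exactly the inequality required by \cref{thm:convergence-result-saddle}, so we may invoke that theorem to obtain \eqref{eq:convergence-fundamental-condition-iter-h-alt}, \eqref{eq:convergence-fundamental-condition-iter-h}, and \eqref{eq:convergence-result-main-h}. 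There is no real obstacle; the only thing to be careful about is confirming that the scalar-multiple-of-identity families $\TauMonotoneSpace,\SigmaMonotoneSpace$ are admissible as testing subspaces and that the three operator inequalities reduce exactly as claimed, which is immediate.
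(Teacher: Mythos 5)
Your proposal is correct and follows exactly the same route as the paper: specialize $\TauMonotoneSpace$ and $\SigmaMonotoneSpace$ to scalar multiples of the identity, note that the partial monotonicity conditions \eqref{eq:g-strong-monotone}, \eqref{eq:f-monotone} (and, which you spell out more explicitly than the paper via Baillon--Haddad, the co-coercivity \eqref{eq:j-coco}) reduce to the corresponding standard (strong) monotonicity/Lipschitz properties, and then invoke \cref{thm:convergence-result-saddle} verbatim.
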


\begin{proof}
    Clearly $\TauTest_i\Tau_i \in \TauMonotoneSpace \defeq [0, \infty)I$ and $\SigmaTest_{i+1}\Sigma_{i+1} \in \SigmaMonotoneSpace \defeq [0, \infty)I$.
    Moreover, $F^*$ satisfies the partial monotonicity condition \eqref{eq:f-monotone} and $G$ satisfies the partial partial monotonicity condition \eqref{eq:g-strong-monotone} with $\Gamma=\gamma I$ by the corresponding (strong) monotonicity of the subdifferentials.
    The rest follows from \cref{thm:convergence-result-saddle}.
\end{proof}

\cbend

\begin{example}[The primal--dual method of Chambolle and Pock \cite{chambolle2010first}]
	\label{example:cpock}
	\begin{subequations}%
	\label{eq:cp}%
	With $J=0$, this method consists of iterating the system
	\begin{align}
	    \nextx & \defeq (I+\tau_i \subdiff G)^{-1}(\thisx - \tau_i K^* y^{i}),\\
	    \label{eq:cp-overnextx}
	    \overnextx & \defeq \omega_i (\nextx-\thisx)+\nextx, \\
	    \nexty & \defeq (I+\sigma_{i+1} \subdiff F^*)^{-1}(\thisy + \sigma_{i+1} K \overnextx).
	\end{align} 
	\end{subequations}
	In the basic version of the algorithm, $\omega_i=1$, $\tau_i \equiv \tau_0 > 0$, and $\sigma_i \equiv \sigma_0 > 0$, assuming the step length parameters to satisfy
    \begin{equation}
        \label{eq:cpock-init-cond}
        \tau_0 \sigma_0 \norm{K}^2 < 1.
    \end{equation}
    If $K$ is compact, the iterates convergence weakly, and the method has $O(1/N)$ rate for the ergodic duality gap, to which we will return in \cref{sec:gap}. If $G$ is strongly convex with factor $\gamma>0$, we may accelerate
	\begin{equation}
	    \label{eq:cpaccel}
	    \omega_i \defeq 1/\sqrt{1+2\gamma\tau_i},
	    \quad
	    \tau_{i+1} \defeq \tau_i\omega_i,
	    \quad\text{and}\quad
	    \sigma_{i+1} \defeq \sigma_i/\omega_i.
	\end{equation}
	This yields $O(1/N^2)$ convergence of $\norm{x^N-\realoptx}^2$ to zero.
\end{example}

\begin{demonstration}[Proof of convergence of iterates]
    We formulate the method in our proximal point framework with $J=0$ and $G=G$ following \cite{tuomov-cpaccel,he2012convergence} by taking as the preconditioner
	\[
		\Precond_{i+1}=\begin{pmatrix} I & -\tau_i K^* \\ -\sigma_i K & I\end{pmatrix}
		\quad\text{and}\quad
		\AltBregFn_{i+1} = 0.
	\]
    For the rest of the operators, we use the setup of \eqref{eq:pd-setup-scalar}.
	Taking $\Penalty_{i+1}(\realoptu) \defeq -\frac{1}{2}\norm{\nextu-\thisu}_{\Test_{i+1}\Precond_{i+1}}^2$, we now reduce \eqref{eq:convergence-fundamental-condition-iter} to
	\begin{equation}
        \label{eq:cpock-d}
	    \frac{1}{2}\norm{\nextu-\realoptu}_{D_{i+2}}^2
		\ge
		0
	\quad\text{for}\quad
		D_{i+2} \defeq \Test_{i+1}(\GammaLift{i+1}(\gamma I)+\Precond_{i+1})-\Test_{i+2}\Precond_{i+2}.
	\end{equation}

	We may expand
    \begin{subequations}%
    \label{eq:cpock-zimi-d-expansion}%
	\begin{align}%
        \label{eq:cpock-zimi}
		\Test_{i+1}\Precond_{i+1} & =
		\begin{pmatrix} 
			\tauTest_i I & -\tauTest_i\tau_i K^* \\
			-\sigmaTest_{i+1}\sigma_{i} K & \sigmaTest_{i+1} I
		\end{pmatrix},
		\quad\text{and}\\
        \label{eq:cpock-d-expansion}
		D_{i+2} & =
		\begin{pmatrix} 
			(\tauTest_i(1+2\gamma\tau_i) - \tauTest_{i+1}) I & (\tauTest_i\tau_i + \tauTest_{i+1}\tau_{i+1})K^* \\
			(\sigmaTest_{i+2}\sigma_{i+1}-2\sigmaTest_{i+1}\sigma_{i+1}-\sigmaTest_{i+1}\sigma_i) K & (\sigmaTest_{i+1} -\sigmaTest_{i+2})I
		\end{pmatrix}.
	\end{align}%
    \end{subequations}%
	We have $\norm{\freevar}_{D_{i+2}}=0$ (but not $D_{i+2}=0$, as the former depends on the off-diagonals cancelling out), and $\Test_{i+1}\Precond_{i+1}$ is self-adjoint, if for some constant $\sigmaTest$ we take
	\begin{equation}
		\label{eq:cpock-par}
		\tauTest_{i+1} \defeq \tauTest_i(1+2\gamma\tau_i),
        \quad
        \tau_i \defeq \tauTest_i^{-1/2},
		\quad
		\sigma_{i} \defeq \tauTest_i\tau_i/\sigmaTest,
		\quad
		\text{and}
		\quad
		\sigmaTest_{i+1} \defeq \sigmaTest.
	\end{equation}
    This gives the acceleration scheme \eqref{eq:cpaccel}. 
	Moreover, for any $\delta \in (0, 1)$ holds
	\begin{equation}
		\label{eq:cpock-zimi-estimate}
		\Test_{i+1}\Precond_{i+1} \ge
		\begin{pmatrix}
			\delta\tauTest_i I & 0 \\
			0 & \sigmaTest I- \inv{(1-\delta)}\tauTest_i\tau_i^2 KK^*
		\end{pmatrix}.
	\end{equation}
	Thus $\Test_{i+1}\Precond_{i+1} \ge 0$ if $\sigmaTest \ge \inv{(1-\delta)}\tauTest_i\tau_i^2\norm{K}^2$. By \eqref{eq:cpock-par}, $\sigma_i\tau_i=1/\sigmaTest$. Since this fixes the ratio of $\sigma_i$ to $\tau_i$, we need to take $\sigmaTest \defeq 1/(\sigma_0\tau_0)$ as well as $\delta \defeq 1-\sigma_0\tau_0\norm{K}^2$. Through the positivity of $\delta$, we recover the initialisation condition \eqref{eq:cpock-init-cond}.

    \cbstart
    Recall that subdifferentials are weak-to-strong outer-semicontinuous.
    By the continuity of $K$, we thus deduce the strong-to-strong outer semicontinuity of $H$. To verify \eqref{eq:precond-continuity}, we use the assumed compactness of $K$, which implies for a further unrelabelled subsequence of $\{u^{i_k}\}_{k \in \N}$ that $w^{i_k} \in H(u^{i_k})$ satisfy  $0=\lim_{k \to \infty} w^{i_k} \in H(\optu)$.
	\Cref{cor:convergence-result-saddle-nogap,prop:rateless} now shows weak convergence of the iterates without a rate.
    \cbend

    If $G$ is strongly convex with factor $\gamma \ge 0$, the results in \cite{chambolle2010first,tuomov-cpaccel} show that $\tau_N$ is of the order $O(1/N)$, and consequently $\tauTest_N$ is of the order $\Theta(N^2)$.
    By \cref{prop:rate}, $\norm{x^N-\realoptx}^2$ converges to zero at the rate $O(1/N^2)$.
\end{demonstration}

\cbstart
\begin{remark}[Brezis--Crandall--Pazy property]
    It is possible to show that $H$ satisfies the Brezis--Crandall--Pazy property \cite{bresiz1970perturbations} without a compactness assumption on $K$.
    With a corresponding improvement to \cref{prop:rate}, the assumption could be dropped.
\end{remark}

\begin{remark}[Linear convergence]
    If $F^*$ is strongly convex with factor $\rho > 0$, the last equation of \eqref{eq:cpock-par} gets similar form as the first, $\sigmaTest_{i+1} \defeq \sigmaTest_i(1+2\rho\sigma_i)$. From here, if both $G$ and $F^*$ are strongly convex, it is possible to show linear convergence.
\end{remark}

We can also add an additional forward step to the method.
With that the method resembles the method of V\~u--Condat \cite{condat2013primaldual,vu2013splitting}, which also incorporates an additional outer over-relaxation step on the whole algorithm.
\cbend

\begin{example}[Chambolle--Pock with a forward step]
	\label{example:cpock-forward}
	Suppose $G$ is (strongly) convex with factor $\gamma \ge 0$, and $\grad J$ Lipschitz with factor $L$.
	In \cite{chambolle2014ergodic}, the Chambolle--Pock method was extended to take forward steps with respect to $J$.
    With everything else as in \cref{example:cpock}, take $\AltBregFn_{i+1}(u) \defeq (\tau_i(\grad J(\thisx)-\grad J(x)), 0)$.    
 	Then the preconditioned proximal point method \eqref{eq:pp} can be rearranged as
    \begin{align}
        \label{eq:np-stage2-x}
        \nextx & \defeq (I+\tau_i \subdiff G)^{-1}(\thisx - \tau_i \grad J(\thisx) - \tau_i K^* y^{i}),\\
        \label{eq:np-stage2-overnextx}
        \overnextx & \defeq \omega_i (\nextx-\thisx)+\nextx, \\
        \label{eq:np-stage2-y}
        \nexty & \defeq (I+\sigma_{i+1} \subdiff F^*)^{-1}(\thisy + \sigma_{i+1} K \overnextx).
    \end{align}
    The method inherits the convergences properties of \cref{example:cpock} if we use the step length update rules \eqref{eq:cpaccel}, and initialise $\tau_0,\sigma_0>0$ subject to \eqref{eq:cpock-init-cond}, and
    \begin{equation}
        \label{eq:cpock-init-j}
        0 < \theta \defeq 1-L\tau_0/(1-\tau_0\sigma_0\norm{K}^2).
    \end{equation}
\end{example}

\begin{demonstration}    
    With $D_{i+2}$ as in \eqref{eq:cpock-d}, the fundamental condition for saddle-point problems \eqref{eq:convergence-fundamental-condition-iter} becomes
	\begin{equation}
		\label{eq:cpock-j-est2}		
		\frac{1}{2}\norm{\nextu-\thisu}_{\Test_{i+1}\Precond_{i+1}}^2
        -\frac{\tau_i\tauTest_i L}{4}\norm{\nextx-\thisx}^2
	    + \frac{1}{2}\norm{\nextu-\realoptu}_{D_{i+2}}^2
		\ge
		- \Penalty_{i+1}(\realoptu).
	\end{equation}
    The rules \eqref{eq:cpock-par} force $\norm{\freevar}_{D_{i+2}}=0$. 
	We take $\Penalty_{i+1}(\realoptu)=-\frac{\theta}{2}\norm{\nextu-\thisu}_{\Test_{i+1}\Precond_{i+1}}^2$ for some $\theta > 0$, and deduce using Cauchy's inequality that \eqref{eq:cpock-j-est2} holds if
	\[
		(1-\theta)\Test_{i+1}\Precond_{i+1} \ge \tau_i\tauTest_i L \begin{pmatrix} I & 0 \\ 0 & 0 \end{pmatrix}.
	\]
    Recalling \eqref{eq:cpock-zimi-estimate}, this is true if $(1-\theta)\delta\tauTest_i \ge \tau_i\tauTest_i L$ and $\sigmaTest \ge \inv{(1-\delta)}\tauTest_i\tau_i^2\norm{K}^2$.
    Further recalling \eqref{eq:cpock-par}, and observing that $\{\tau_i\}$ is non-increasing, we only have to satisfy $(1-\theta)(1-\tau_0\sigma_0\norm{K}^2) \ge L \tau_0$. Otherwise put, we obtain \eqref{eq:cpock-init-j}.
\end{demonstration}

\cbstart
Finally, we have the following Generalised Iterative Soft Thresholding (GIST) method from \cite{loris2011generalization}.
\cbend

\begin{example}[GIST]
    \label{example:gist}
	Suppose $G=0$, $J(x)=\frac{1}{2}\norm{f-Ax}^2$, $\norm{A} < \sqrt{2}$, and $\norm{K} \le 1$. Take
	\[
		\AltBregFn_{i+1}(u) \defeq \begin{pmatrix} \grad J(\thisx)-\grad J(x) \\ 0
		\end{pmatrix},
		\quad\text{and}\quad
		\Precond_{i+1} \defeq \begin{pmatrix} I & 0 \\ 0 & I-KK^*\end{pmatrix}.
	\]
	With $\Tau_{i} \defeq I$ and $\Sigma_{i+1} \defeq I$, we obtain the method
	\begin{align*}
		\nexty & \defeq \inv{(I+\subdiff F^*)}((I-KK^*)\thisy+K(\thisx-\grad G(\thisx))), \\
		\nextx & \defeq \thisx - \grad G(\thisx) - K^*\nexty.
	\end{align*}
    If $K$ is compact, the iterates $\{\thisx\}_{i \in \N}$ converge weakly to $\realoptx$.
\end{example}

\begin{demonstration}
    Observe that the partial co-coercivity \eqref{eq:j-coco} holds with $L=\norm{A}^2$. 
    Clearly $\Test_{i+1}\Precond_{i+1}$ is positive semi-definite self-adjoint.
    If we take $\TauTest_i=I$ and $\SigmaTest_{i+1}=I$, then
    \[
        D_{i+2} \defeq \Test_{i+1}(\GammaLift{i+1}(0)+\Precond_{i+1})-\Test_{i+2}\Precond_{i+2}
        =
        \begin{pmatrix}
            0 & 2 K^* \\
            -2K & 0
        \end{pmatrix}.
    \]
    Thus $\frac{1}{2}\norm{u}_{D_{i+2}}^2=0$.
    Eliminating $\subdiff F^*$ by monotonicity, the fundamental condition for saddle-point problems \eqref{eq:convergence-fundamental-condition-iter} thus holds if
    \begin{equation*}
        \frac{1}{2}\norm{\nextu-\thisu}_{\Test_{i+1}\Precond_{i+1}}^2
        -\frac{L}{4}\norm{\nextx-\realoptx}^2
        \ge
        - \Penalty_{i+1}(\realoptu).
    \end{equation*}
    Expanding $\Test_{i+1}\Precond_{i+1}$, we see this to hold when $\norm{K} < 1$ and $L<2$, which are exactly our assumptions.
    Using \cref{cor:convergence-result-saddle-nogap,prop:rate}, and reasoning as in \cref{example:cpock} to verify the outer-semicontinuity properties of $H$, we obtain weak convergence.
\end{demonstration}

\section{An ergodic duality gap}
\label{sec:gap}

We now study the extension of the testing approach of \cref{sec:convergence-basic} to produce the convergence of an ergodic duality gap.
Throughout this section, we are in the saddle point setup of \cref{sec:saddle}. In particular, the operator $H$ is as in \eqref{eq:h}, and the step length and testing operators $\Step_{i+1}$ and $\Test_{i+1}$ as in \eqref{eq:test}.

\subsection{Preliminary gap estimates}
\label{sec:gap-preliminary}

Our first lemma demonstrates how to obtain a ``preliminary'' gap $\gap'_{i+1}(u)$ from $H$. 
If the step lengths and tests are scalar, $\Tau_i=\tau_i I$, and $\TauTest_i=\tauTest_i I$, etc., and satisfy $\tau_i\tauTest_i=\sigma_i\sigmaTest_{i+1}$, it is easy to bound this preliminary gap from below by $\tau_i\tauTest_i$ times the \cbstart ``relaxed'' duality gap
\begin{equation}
    \label{eq:gap}
    \gap(x, y) \defeq
    \bigl([G+J](x) + \iprod{\realopty}{K x}  - F(\realopty)\bigr)
    -\bigl([G+J](\realoptx) + \iprod{y}{K \realoptx} -  F^*(y)\bigr).
\end{equation}\cbend
To do the same for more general step length operators, we will in \cref{sec:gap-conversion} introduce abstract notions of convexity that incorporate ergodicity and stochasticity.

\cbstart
Observe that the ``relaxed'' gap \eqref{eq:gap} satisfies
\[
    0 \le \gap(x, y) \le [G+J](x)+F(Kx) + [G+J]^*(-K\realopty) + F^*(\realopty),
\]
where the right-hand side is the conventional duality gap guaranteed to be non-zero for a non-solution $x$.
\cbend

\begin{lemma}
    \label{lemma:h-gap}
    \cbstart For a fixed $i \in \N$, \cbend
    suppose $\TauTest_i\Tau_i$ and $\SigmaTest_{i+1}\Sigma_{i+1}$ are self-adjoint.
    Then for $H$ as in \eqref{eq:h}, we have
    \begin{equation}
        \label{eq:h-gap}
        \iprod{H(\nextu)}{\nextu - \realoptu}_{\Test_{i+1} \Step_{i+1}}
        =
        \gap'_{i+1}(\nextu)
        +\frac{1}{2}\norm{\nextu-\realoptu}_{\Test_{i+1}\GammaLift{i+1}(0)},
    \end{equation}
    where the ``preliminary gap''
    \begin{equation}
        \label{eq:preliminary-gap}
        \begin{split}
        \gap'_{i+1}(u)
        &
        \defeq
        \iprod{\subdiff [G+J](x)}{x - \realoptx}_{\TauTest_i\Tau_i}
        + \iprod{\subdiff F^*(y)}{y - \realopty}_{\SigmaTest_{i+1}\Sigma_{i+1}}
        \\
        &                 
        \phantom{ \defeq }
        - \iprod{\realopty}{(K\Tau_i^*\TauTest_i^* - \SigmaTest_{i+1}\Sigma_{i+1}K)\realoptx}
        - \iprod{y}{\SigmaTest_{i+1}\Sigma_{i+1}K \realoptx}
        + \iprod{\realopty}{K \Tau_i^*\TauTest_i^*x}.
        \end{split}
    \end{equation}
\end{lemma}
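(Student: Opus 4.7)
The plan is to prove \eqref{eq:h-gap} by direct algebraic expansion. There is no conceptual obstacle---the entire lemma amounts to careful bookkeeping of the linear and bilinear $K$-terms, with the self-adjointness hypotheses on $\TauTest_i\Tau_i$ and $\SigmaTest_{i+1}\Sigma_{i+1}$ providing the freedom to move these operators across inner products.

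First, using the block structure of $H$ from \eqref{eq:h} and of $\Test_{i+1}\Step_{i+1}=\diag(\TauTest_i\Tau_i,\,\SigmaTest_{i+1}\Sigma_{i+1})$, I expand the left-hand side as
\[
    \iprod{H(\nextu)}{\nextu-\realoptu}_{\Test_{i+1}\Step_{i+1}}
    = \iprod{\subdiff[G+J](\nextx)}{\nextx-\realoptx}_{\TauTest_i\Tau_i}
    + \iprod{\subdiff F^*(\nexty)}{\nexty-\realopty}_{\SigmaTest_{i+1}\Sigma_{i+1}} + A,
\]
where $A \defeq \iprod{K^*\nexty}{\nextx-\realoptx}_{\TauTest_i\Tau_i} - \iprod{K\nextx}{\nexty-\realopty}_{\SigmaTest_{i+1}\Sigma_{i+1}}$ collects the $K$-cross terms. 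The two subdifferential contributions already agree with the first two terms of $\gap'_{i+1}(\nextu)$ in \eqref{eq:preliminary-gap}, so the task reduces to showing that $A$ equals the remaining cross terms of $\gap'_{i+1}(\nextu)$ plus $\frac{1}{2}\norm{\nextu-\realoptu}^2_{\Test_{i+1}\GammaLift{i+1}(0)}$.

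Second, using self-adjointness of $\TauTest_i\Tau_i$ (so we can move it as $\Tau_i^*\TauTest_i^*$ to the other side of $\iprod{\freevar}{\freevar}$), I rewrite $A$ as
\[
    A = \iprod{\nexty}{K\Tau_i^*\TauTest_i^*(\nextx-\realoptx)} - \iprod{\SigmaTest_{i+1}\Sigma_{i+1}K\nextx}{\nexty-\realopty}.
\]
Meanwhile, from the definition \eqref{eq:gammalift-q} of $\GammaLift{i+1}(0)$, the norm-like term expands as
\[
    \tfrac{1}{2}\norm{\nextu-\realoptu}^2_{\Test_{i+1}\GammaLift{i+1}(0)}
    = \iprod{\nexty-\realopty}{K\Tau_i^*\TauTest_i^*(\nextx-\realoptx)} - \iprod{\SigmaTest_{i+1}\Sigma_{i+1}K(\nextx-\realoptx)}{\nexty-\realopty}.
\]

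Third, I subtract this norm term from $A$, expand out the differences $\nextx-\realoptx$ and $\nexty-\realopty$, and collect terms. The terms purely in $(\nextx,\nexty)$ cancel, and what remains rearranges to exactly the trailing $\realoptx/\realopty$ contributions in \eqref{eq:preliminary-gap}, namely $-\iprod{\realopty}{(K\Tau_i^*\TauTest_i^* - \SigmaTest_{i+1}\Sigma_{i+1}K)\realoptx} - \iprod{\nexty}{\SigmaTest_{i+1}\Sigma_{i+1}K\realoptx} + \iprod{\realopty}{K\Tau_i^*\TauTest_i^*\nextx}$. The self-adjointness of $\SigmaTest_{i+1}\Sigma_{i+1}$ is used once to identify $\iprod{\realopty}{\SigmaTest_{i+1}\Sigma_{i+1}K\realoptx}$ with its adjoint counterpart during this collection. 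The identity \eqref{eq:h-gap} then follows.

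The only mildly delicate point is keeping track of which of the sixteen terms appearing in the expansion of $A - \frac{1}{2}\norm{\freevar}^2_{\Test_{i+1}\GammaLift{i+1}(0)}$ survive, and ensuring the adjoint manipulations are justified by the stated self-adjointness hypotheses; no monotonicity, convexity, or step-length conditions are needed for this lemma, which is why it can be stated in this form independent of the rest of the machinery.
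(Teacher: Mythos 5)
Your proposal is correct and takes essentially the same approach as the paper's proof: expand $\iprod{H(\nextu)}{\nextu-\realoptu}_{\Test_{i+1}\Step_{i+1}}$ using the block structure, use the self-adjointness of $\TauTest_i\Tau_i$ to transpose the off-diagonal $K$-term, and then do a direct term-by-term reorganisation to isolate the $\frac{1}{2}\norm{\nextu-\realoptu}^2_{\Test_{i+1}\GammaLift{i+1}(0)}$ contribution from the remaining $\realoptx$/$\realopty$ cross terms of $\gap'_{i+1}$. (You also correctly read the missing exponent in the paper's \eqref{eq:h-gap} as a square on the norm, which the algebra requires.)
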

\begin{proof}
    Similarly to the proof of \cref{thm:convergence-result-saddle}, we have
    \begin{equation*}
        \begin{split}
            \iprod{H(\nextu)}{\nextu - \realoptu}_{\Test_{i+1} \Step_{i+1}}
            &
            = \iprod{\subdiff[G+J](\nextx)}{\nextx - \realoptx}_{\TauTest_i\Tau_i}
            + \iprod{\TauTest_i\Tau_iK^*\nexty}{\nextx-\realoptx}
            \\ & \phantom{ = }
            + \iprod{\subdiff F^*(\nexty)}{\nexty - \realopty}_{\SigmaTest_{i+1}\Sigma_{i+1}}
            - \iprod{\SigmaTest_{i+1}\Sigma_{i+1}K \nextx}{\nexty-\realopty}.
        \end{split}
    \end{equation*}
    A little bit of reorganisation gives \eqref{eq:h-gap}. Indeed
    \begin{equation*}
        \begin{split}
            \iprod{H(\nextu)}{\nextu - \realoptu}_{\Test_{i+1} \Step_{i+1}}
            &
            =
            \iprod{\subdiff[G+J](\nextx)}{\nextx - \realoptx}_{\TauTest_i\Tau_i}
            + \iprod{\subdiff F^*(\nexty)}{\nexty - \realopty}_{\SigmaTest_{i+1}\Sigma_{i+1}}
            \\ & \phantom{ = }            
            + \iprod{\nexty-\realopty}{(K\Tau_i^*\TauTest_i^* - \SigmaTest_{i+1}\Sigma_{i+1}K)(\nextx-\realoptx)}
            \\ & \phantom{ = }
            - \iprod{\realopty}{(K\Tau_i^*\TauTest_i^* - \SigmaTest_{i+1}\Sigma_{i+1}K)\realoptx}      
            \\ & \phantom{ = }
            - \iprod{\nexty}{\SigmaTest_{i+1}\Sigma_{i+1}K \realoptx}
            + \iprod{\realopty}{K \Tau_i^*\TauTest_i^* \nextx}
            \\
            &
            =
            \gap'_{i+1}(\nextu)+\frac{1}{2}\norm{\nextu-\realoptu}_{\Test_{i+1}\GammaLift{i+1}(0)}.
            \qedhere
        \end{split}
    \end{equation*}
\end{proof}

The next lemma extends \cref{thm:convergence-result-saddle} to estimate the preliminary gap.

\begin{lemma}
    \label{lemma:convergence-result-gap}
    Let $H$ have the structure \eqref{eq:h} and assume $\realoptu \in \inv H(0)$.
    For each $i \in \N$, let $\Tau_i, \TauTest_i \in \L(X; X)$ and $\Sigma_{i+1}, \SigmaTest_{i+1} \in \L(Y; Y)$, as well as $\AltBregFn_{i+1} \in X \times Y \to X \times Y$ and $\Precond_{i+1} \in \linear(X \times Y; X \times Y)$.
    Define $\Test_{i+1}$ and $\Step_{i+1}$ through \eqref{eq:test}.
    Suppose \eqref{eq:pp} is solvable for $\{\nextu\}_{i \in \N} \subset X \times Y$.
    If \cbstart for all $i \in \N$, \cbend $\Test_{i+1}\Precond_{i+1}$ is self-adjoint, and
    \begin{multline}
        \label{eq:convergence-fundamental-condition-iter-gap}
        \frac{1}{2}\norm{\nextu-\thisu}_{\Test_{i+1}\Precond_{i+1}}^2
        + \frac{1}{2}\norm{\nextu-\realoptu}_{\Test_{i+1}(\GammaLift{i+1}(0)+\Precond_{i+1})-\Test_{i+2}\Precond_{i+2}}^2
        + \iprod{\AltBregFn_{i+1}(\nextu)}{\nextu - \realoptu}_{\Test_{i+1}}
        \\
        \ge
        - \tilde\Penalty_{i+1}(\realoptu),
    \end{multline}
    then
    \begin{equation}
        \label{eq:convergence-result-gap}
        \frac{1}{2}\norm{u^N-\realoptu}^2_{\Test_{N+1}\Precond_{N+1}}
        + \sum_{i=0}^{N-1} \gap'_{i+1}(\nextu)
        \le
        \frac{1}{2}\norm{u^0-\realoptu}^2_{\Test_{1}\Precond_{1}}
        +
        \sum_{i=0}^{N-1} \tilde\Penalty_{i+1}(\realoptu)
        \quad
        (N \ge 1).
    \end{equation}
\end{lemma}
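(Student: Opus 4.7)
The plan is to reduce \eqref{eq:convergence-fundamental-condition-iter-gap} to the abstract fundamental condition \eqref{eq:convergence-fundamental-condition-iter-h} of \cref{thm:convergence-result-main-h}, applied to the set-valued operator
\[
    \tilde H_{i+1}(u) \defeq \Step_{i+1} H(u) + \AltBregFn_{i+1}(u).
\]
The solvability hypothesis transfers immediately: \eqref{eq:pp} with $\BregFn_{i+1}(u)=\AltBregFn_{i+1}(u)+\Precond_{i+1}(u-\thisu)$ is precisely \eqref{eq:ppext} for this $\tilde H_{i+1}$, so the iterates $\{\nextu\}_{i \in \N}$ are well-defined for use in \cref{thm:convergence-result-main-h}.

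The key step is to rewrite the cross-term involving $H(\nextu)$ using the identity from \cref{lemma:h-gap}, namely
\[
    \iprod{\Step_{i+1} H(\nextu)}{\nextu-\realoptu}_{\Test_{i+1}}
    = \gap'_{i+1}(\nextu) + \tfrac{1}{2}\norm{\nextu-\realoptu}_{\Test_{i+1}\GammaLift{i+1}(0)}^2.
\]
Using this identity together with the standard three-point formula \eqref{eq:standard-identity} applied to $M=\Test_{i+1}\Precond_{i+1}$, the hypothesis \eqref{eq:convergence-fundamental-condition-iter-gap} rearranges to
\[
    \iprod{\tilde H_{i+1}(\nextu)}{\nextu-\realoptu}_{\Test_{i+1}}
    \ge
    \tfrac{1}{2}\norm{\nextu-\realoptu}_{\Test_{i+2}\Precond_{i+2}-\Test_{i+1}\Precond_{i+1}}^2
    -\tfrac{1}{2}\norm{\nextu-\thisu}_{\Test_{i+1}\Precond_{i+1}}^2
    - \Penalty_{i+1}(\realoptu),
\]
where $\Penalty_{i+1}(\realoptu) \defeq \tilde\Penalty_{i+1}(\realoptu) - \gap'_{i+1}(\nextu)$. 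This is precisely \eqref{eq:convergence-fundamental-condition-iter-h}. Note that \cref{thm:convergence-result-main-h} only requires $\Penalty_{i+1} \in \R$, so the fact that our $\Penalty_{i+1}$ depends on the iterate $\nextu$ through $\gap'_{i+1}(\nextu)$ is not an issue.

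With \eqref{eq:convergence-fundamental-condition-iter-h} in hand and $\Test_{i+1}\Precond_{i+1}$ self-adjoint by hypothesis, \cref{thm:convergence-result-main-h} yields the descent inequality \eqref{eq:convergence-result-main-h}, i.e.,
\[
    \tfrac{1}{2}\norm{u^N-\realoptu}^2_{\Test_{N+1}\Precond_{N+1}}
    \le
    \tfrac{1}{2}\norm{u^0-\realoptu}^2_{\Test_{1}\Precond_{1}}
    + \sum_{i=0}^{N-1}\bigl(\tilde\Penalty_{i+1}(\realoptu) - \gap'_{i+1}(\nextu)\bigr).
\]
Moving the sum $\sum_{i=0}^{N-1}\gap'_{i+1}(\nextu)$ to the left-hand side gives \eqref{eq:convergence-result-gap} and completes the proof. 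There is no substantive obstacle here; the argument is essentially a bookkeeping manipulation that isolates $\gap'_{i+1}$ as a negative penalty inside the framework of \cref{thm:convergence-result-main-h}.
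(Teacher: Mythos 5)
Your proof is correct and follows essentially the same route as the paper: rewrite $\iprod{\Step_{i+1}H(\nextu)}{\nextu-\realoptu}_{\Test_{i+1}}$ via \cref{lemma:h-gap}, absorb the cross term $\frac{1}{2}\norm{\nextu-\realoptu}^2_{\Test_{i+1}\GammaLift{i+1}(0)}$ into the second-norm term of \eqref{eq:convergence-fundamental-condition-iter-gap}, and recognise \eqref{eq:convergence-fundamental-condition-iter-h} with $\Penalty_{i+1}(\realoptu)\defeq\tilde\Penalty_{i+1}(\realoptu)-\gap'_{i+1}(\nextu)$, then invoke \cref{thm:convergence-result-main-h}. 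One small inaccuracy: the three-point identity \eqref{eq:standard-identity} is not used in the rearrangement from \eqref{eq:convergence-fundamental-condition-iter-gap} to \eqref{eq:convergence-fundamental-condition-iter-h} --- that step only needs the linearity of the squared $\norm{\cdot}_M$ in $M$; the three-point identity is used internally in the proof of \cref{thm:convergence-result-main-h}, which you are invoking as a black box.
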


\begin{proof}
    Inserting \eqref{eq:h-gap} from \cref{lemma:h-gap} into \eqref{eq:convergence-fundamental-condition-iter-gap} shows that
    \begin{multline*}
        \frac{1}{2}\norm{\nextu-\thisu}_{\Test_{i+1}\Precond_{i+1}}^2
        + \frac{1}{2}\norm{\nextu-\realoptu}_{\Test_{i+1}(\Precond_{i+1}+\GammaLift{i+1}(0))-\Test_{i+2}\Precond_{i+2}}^2
        \\
        + \iprod{\Step_{i+1} H(\nextu)+\AltBregFn_{i+1}(\nextu)}{\nextu - \realoptu}_{\Test_{i+1}}
        \ge
        \gap'_{i+1}(\nextu) - \tilde\Penalty_{i+1}(\realoptu).
    \end{multline*}
    Hence the fundamental condition \eqref{eq:convergence-fundamental-condition-iter-h} holds for $\Penalty_{i+1}(\realoptu) \defeq \tilde\Penalty_{i+1}(\realoptu) - \gap'_{i+1}(\nextu)$.
    Now we use \cref{thm:convergence-result-main-h}.
\end{proof}

\subsection{General conversion formulas of preliminary gaps to ergodic gaps}
\label{sec:gap-conversion1}

The ``preliminary gaps'' are not as such very useful. To go further, the abstract partial monotonicity assumptions \eqref{eq:g-strong-monotone} and \eqref{eq:f-monotone} are not enough, and we need analogous convexity formulations. 
We formulate these conditions directly in the stochastic setting (recall \cref{sec:stochastic-examples}).

For the moment, we assume for all $N \ge 1$ that whenever $\TauTilde_{i}\, (\defeq\TauTest_i\Tau_i) \in \Random(\TauMonotoneSpace)$ and $\nextx \in \Random(X)$ for each $i=0,\ldots,N-1$ with $\sum_{i=0}^{N-1} \E[\TauTilde_i]=I$, then for some $\nexxt{\delta_G} \in \Random(\R)$ holds
\begin{equation}
    \label{eq:g-convex-abstract}
    [G+J](\realoptx)-
    [G+J]\Biggl(\sum_{i=0}^{N-1} \E[\TauTilde_{i}^*\nextx]\Biggr)
    \ge
    \sum_{i=0}^{N-1} \E\bigl[\iprod{\subdiff  [G+J](\nextx)}{\realoptx-\nextx}_{\TauTilde_{i}}
     + \nexxt{\delta_{G+J,N}} \bigr].
\end{equation}
Analogously, we assume for $\SigmaTilde_{i+1}\, (\defeq\SigmaTest_{i+1}\Sigma_{i+1}) \in \Random(\SigmaMonotoneSpace)$ and $\nexty \in \Random(Y)$ for each $i=0,\ldots,N-1$ with $\sum_{i=0}^{N-1} \E[\SigmaTilde_{i+1}]=I$ that for some $\nexxt{\delta_{F^*}} \in \Random(\R)$ holds
\begin{equation}
    \label{eq:f-convex-abstract}
    F^*(\realopty)-F^*\Biggl(\sum_{i=0}^{N-1} \E[\SigmaTilde_{i+1}^*\nexty]\Biggr)
    \ge
    \sum_{i=0}^{N-1} \E\bigl[\iprod{\subdiff F^*(\nexty)}{\realopty-\nexty}_{\SigmaTilde_{i+1}}
     + \nexxt{\delta_{F^*,N}} \bigr].
\end{equation}
These conditions can of course always be satisfied for some $\nexxt{\delta_G}$ and $\nexxt{\delta_{F^*}}$. After a few general lemmas, we will replace these placeholder values by more meaningful ones. 

To state those lemmas, we also assume \cbstart for some scalars $\bar\eta_i \in \R$, ($i \in \N$), \cbend either of the \term{primal--dual coupling conditions}
\begin{align}
    \label{eq:cond-eta}
    \tag{C$\mathcal{G}$}
    \E[\TauTest_i\Tau_i] & = \bar\eta_i I, 
    \quad\text{and}&
    \E[\SigmaTest_{i+1}\Sigma_{i+1}] & = \bar\eta_i I,
    \quad(i \ge 1),
    \\
\intertext{or}
    \label{eq:cond-etatwo}
    \tag{C$\mathcal{G}_*$}
    \E[\TauTest_i\Tau_i] & = \bar\eta_i I,
    \quad\text{and} &
    \E[\SigmaTest_{i}\Sigma_{i}] & = \bar\eta_i I,
    \quad(i \ge 1),
\end{align}
As will see in \cref{example:cpock-gap}, \eqref{eq:cond-etatwo} is satisfied by the accelerated Chambolle--Pock method of \cref{example:cpock}.
In our companion paper \cite{tuomov-blockcp}, we will however see that \eqref{eq:cond-eta} is required to develop doubly-stochastic methods.

\begin{lemma}
    \label{lemma:gap-transform}
    Assume \eqref{eq:g-convex-abstract}, \eqref{eq:f-convex-abstract}, and the first primal--dual coupling condition \eqref{eq:cond-eta}.
    Given iterates $\{(\thisx,\thisy)\}_{i=1}^\infty \subset X \times Y$, for all $N \ge 1$ set
    \begin{equation*}
        \zeta_N \defeq \sum_{i=0}^{N-1} \bar\eta_i,
    \end{equation*}
    and define the ergodic sequences
    \begin{equation}
        \label{eq:tildexnyn}
        \tilde x_{N} \defeq \inv\zeta_{N}\E\Biggl[\sum_{i=0}^{N-1} \Tau_i^*\TauTest_i^* \nextx\Biggr],    
        \quad\text{and}\quad
        \tilde y_{N} \defeq \inv\zeta_{N}\E\Biggl[\sum_{i=0}^{N-1} \Sigma_{i+1}^*\SigmaTest_{i+1}^* \nexty\Biggr].
    \end{equation}
    Then
    \[
        \sum_{i=0}^{N-1} \E[\gap'_{i+1}(\nextx,\nexty) + \zeta_N\nexxt{\delta_{G+J,N}} + \zeta_N\nexxt{\delta_{F^*,N}}] 
        \ge \zeta_N \gap(\tilde x_N, \tilde y_N)
        \cbstart \quad (N \ge 1). \cbend
    \]
\end{lemma}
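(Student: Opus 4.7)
The plan is to expand $\sum_{i=0}^{N-1}\E[\gap'_{i+1}(\nextx,\nexty)]$ term by term according to the definition \eqref{eq:preliminary-gap}, and then separately bound the two subgradient terms and the three linear cross-terms. The goal is to match the six pieces of $\zeta_N\gap(\tilde x_N,\tilde y_N)$ read from \eqref{eq:gap} after all simplifications.

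First I would apply the abstract convexity assumption \eqref{eq:g-convex-abstract} with $\TauTilde_i := \TauTest_i\Tau_i/\zeta_N$. The coupling condition \eqref{eq:cond-eta} gives $\sum_i \E[\TauTilde_i]=(1/\zeta_N)\sum_i\bar\eta_i I = I$, so the hypothesis of \eqref{eq:g-convex-abstract} is satisfied; by construction the weighted average $\sum_i\E[\TauTilde_i^*\nextx]$ is precisely $\tilde x_N$ as defined in \eqref{eq:tildexnyn}. Using the linearity of $\iprod{\cdot}{\cdot}_{\TauTilde_i}$ in the operator argument and multiplying through by $\zeta_N$ would convert the $G{+}J$ subgradient term of $\gap'_{i+1}$ into $\zeta_N([G+J](\tilde x_N)-[G+J](\realoptx))$ up to the $\zeta_N \sum_i\E[\nexxt{\delta_{G+J,N}}]$ remainder. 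An entirely analogous application of \eqref{eq:f-convex-abstract} with $\SigmaTilde_{i+1}:=\SigmaTest_{i+1}\Sigma_{i+1}/\zeta_N$ handles the $F^*$ subgradient term.

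The three remaining cross-terms in \eqref{eq:preliminary-gap} I would treat by direct calculation using \eqref{eq:cond-eta}. Taking adjoints of $\E[\TauTest_i\Tau_i]=\bar\eta_i I=\E[\SigmaTest_{i+1}\Sigma_{i+1}]$ and summing, one obtains $\sum_i \E[\Tau_i^*\TauTest_i^*] = \zeta_N I = \sum_i\E[\SigmaTest_{i+1}\Sigma_{i+1}]$, so the first cross-term $-\iprod{\realopty}{(K\Tau_i^*\TauTest_i^* - \SigmaTest_{i+1}\Sigma_{i+1}K)\realoptx}$ vanishes on average. The remaining two cross-terms, after summing and moving the operators onto $\nexty$ and $\nextx$, produce exactly $-\zeta_N\iprod{\tilde y_N}{K\realoptx}$ and $+\zeta_N\iprod{\realopty}{K\tilde x_N}$ by the definitions in \eqref{eq:tildexnyn}.

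Assembling the six resulting contributions and comparing with \eqref{eq:gap} yields the claim. The only genuinely delicate point is ensuring the correct placement of adjoints, which is exactly why the ergodic averages in \eqref{eq:tildexnyn} are formed with $\Tau_i^*\TauTest_i^*$ rather than $\TauTest_i\Tau_i$: this is what makes the Jensen-type transfer through $[G+J]$ and $F^*$ compatible with the linear cross-terms in $\gap'_{i+1}$, and it is also where the symmetric primal--dual coupling \eqref{eq:cond-eta} (as opposed to the shifted variant \eqref{eq:cond-etatwo}) is crucially used.
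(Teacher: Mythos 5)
Your proposal is correct and follows exactly the paper's own route: apply \eqref{eq:g-convex-abstract} and \eqref{eq:f-convex-abstract} with the normalisations $\TauTilde_i=\inv\zeta_N\TauTest_i\Tau_i$ and $\SigmaTilde_{i+1}=\inv\zeta_N\SigmaTest_{i+1}\Sigma_{i+1}$, use \eqref{eq:cond-eta} to annihilate the $\realoptu$-only cross-term in expectation and to identify the remaining two cross-terms with the coupling terms of $\gap$, and then assemble. Your remark about why \eqrefpage{eq:tildexnyn} carries adjoints and why the unshifted coupling \eqref{eq:cond-eta} rather than \eqref{eq:cond-etatwo} is what makes this version of the lemma work is a useful clarification not spelled out in the paper's proof, but the underlying argument is the same.
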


\begin{proof}
    Let $N$ be fixed.
    With $\TauTilde_i \defeq \inv\zeta_N \TauTest_i\Tau_i$ over $i=0,\ldots,N-1$, \eqref{eq:g-convex-abstract} implies
    \begin{equation}
        \label{eq:g-convex-abstract-ergvar}
        \zeta_N\bigl(
            [G+J](\realoptx)-[G+J](\tilde x_{N})
        \bigr)
        \ge
        \sum_{i=0}^{N-1} \E\bigl[\iprod{\subdiff  [G+J](\nextx)}{\realoptx-\nextx}_{\TauTest_i\Tau_i}
        \bigr] + \zeta_N\nexxt{\delta_{G+J,N}}.
    \end{equation}
    Likewise, with $\SigmaTilde_{i+1} \defeq \inv\zeta_N \SigmaTest_{i+1}\Sigma_{i+1}$, \eqref{eq:f-convex-abstract} shows that
    \begin{equation}
        \label{eq:f-convex-abstract-ergvar}
        \zeta_N\bigl(
            F^*(\realopty)-F^*(\tilde y_{N})
        \bigr)
        \ge
        \sum_{i=0}^{N-1} \E\bigl[\iprod{\subdiff F^*(\nexty)}{\realopty-\nexty}_{\SigmaTest_{i+1}\Sigma_{i+1}}
        \bigr] + \zeta_N\nexxt{\delta_{F^*,N}}.
    \end{equation}
    From the definition of the preliminary gap in \eqref{eq:preliminary-gap}, applying \eqref{eq:cond-eta}, we obtain
    \begin{equation*}        
        \begin{split}
        \sum_{i=0}^{N-1} \E[\gap'_{i+1}(\nextu)]
        &
        =
        \sum_{i=0}^{N-1} \E[\iprod{\subdiff [G+J](\nextx)}{\nextx - \realoptx}_{\TauTest_i\Tau_i}
        + \iprod{\subdiff F^*(\nexty)}{\nexty - \realopty}_{\SigmaTest_{i+1}\Sigma_{i+1}}]
        \\
        &
        \phantom{ \defeq }
        - \sum_{i=0}^{N-1} \E[\iprod{\nexty}{\SigmaTest_{i+1}\Sigma_{i+1}K \realoptx}
        + \iprod{\realopty}{K \Tau_i^*\TauTest_i^*\nextx}].
        \end{split}
    \end{equation*}
    Recalling the definition of the gap $\gap$ in \eqref{eq:gap}, and using the estimates \eqref{eq:g-convex-abstract-ergvar}, \eqref{eq:f-convex-abstract-ergvar}, as well as the definition  \eqref{eq:tildexnyn} of the ergodic sequences, we obtain the claim.
\end{proof}

\begin{lemma}
    \label{lemma:gap-transformtwo}
    \cbstart
    Suppose $G$ and $F^*$ satisfy with $\Gamma=0$ the corresponding partial monotonicities \eqref{eq:g-strong-monotone} and \eqref{eq:f-monotone}.
    \cbend
    Also assume \eqref{eq:g-convex-abstract}, \eqref{eq:f-convex-abstract}, and the second primal--dual coupling condition \eqref{eq:cond-etatwo}.
    Given $\{(\thisx,\thisy)\}_{i=1}^\infty \subset X \times Y$, for all $N \ge 1$ set
    \begin{equation*}
        \zeta_{*,N} \defeq \sum_{i=1}^{N-1} \bar\eta_i,
    \end{equation*}
    and define the ergodic sequences
    \begin{equation*}
        \tilde x_{*,N} \defeq \inv\zeta_{*,N}\E\Biggl[\sum_{i=1}^{N-1} \Tau_i^*\TauTest_i^* \nextx\Biggr],
        \quad\text{and}\quad
        \tilde y_{*,N} \defeq \inv\zeta_{*,N}\E\Biggl[\sum_{i=1}^{N-1} \Sigma_i^*\SigmaTest_i^* \thisy\Biggr].
    \end{equation*}
    Then
    \[
        \sum_{i=0}^{N-1} \E[\gap'_{i+1}(\nextx,\nexty) + \zeta_{*,N}\nexxt{\delta_{G+J,N}} + \zeta_{*,N}\nexxt{\delta_{F^*,N}}] 
        \ge \zeta_{*,N} \gap(\tilde x_{*,N}, \tilde y_{*,N})
        \cbstart \quad (N \ge 1). \cbend
    \]  
\end{lemma}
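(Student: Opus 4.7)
The strategy is to adapt the proof of \cref{lemma:gap-transform}, with extra care for two sources of index mismatch: (i) the truncated sum $\sum_{i=1}^{N-1}$ (rather than $\sum_{i=0}^{N-1}$) in both $\tilde x_{*,N}$ and $\tilde y_{*,N}$, and (ii) the off-by-one pairing of the dual weight $\SigmaTest_i\Sigma_i$ with iterate $\thisy$ (rather than $\SigmaTest_{i+1}\Sigma_{i+1}$ with $\nexty$). The added partial monotonicity hypotheses on $\subdiff[G+J]$ and $\subdiff F^*$ (with $\Gamma=0$) are what will let us absorb the resulting boundary terms.

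The first step is to apply \eqref{eq:g-convex-abstract} with weights $\TauTilde_i \defeq \inv{\zeta_{*,N}}\TauTest_i\Tau_i$ over $i = 1,\ldots,N-1$. By \eqref{eq:cond-etatwo} these weights sum in expectation to $I$, and the resulting ergodic average is exactly $\tilde x_{*,N}$, yielding
\[
    \sum_{i=1}^{N-1}\E\bigl[\iprod{\subdiff[G+J](\nextx)}{\nextx-\realoptx}_{\TauTest_i\Tau_i}\bigr] \ge \zeta_{*,N}\bigl([G+J](\tilde x_{*,N})-[G+J](\realoptx)\bigr) - \zeta_{*,N}\nexxt{\delta_{G+J,N}}.
\]
Analogously, \eqref{eq:f-convex-abstract} applied with $\SigmaTilde_i \defeq \inv{\zeta_{*,N}}\SigmaTest_i\Sigma_i$ and points $\thisy$ over $i = 1,\ldots,N-1$ produces the dual analogue involving $\tilde y_{*,N}$ and $F^*$.

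The second step is to expand $\sum_{i=0}^{N-1}\E[\gap'_{i+1}(\nextu)]$ via \eqref{eq:preliminary-gap} and relate it to the two convexity inequalities above. The primal subdifferential sum equals the boundary contribution at $i=0$ plus the truncated sum absorbed by step one; reindexing the dual sum with $j=i+1$, it equals the $j=1,\ldots,N-1$ piece absorbed by the dual inequality plus the boundary contribution at $j=N$. These two boundary terms are bounded below by purely linear expressions: use the monotonicity of $\subdiff[G+J]$ together with $-K^*\realopty \in \subdiff[G+J](\realoptx)$ to obtain $-\iprod{\realopty}{K\Tau_0^*\TauTest_0^*(x^1-\realoptx)}$, and the monotonicity of $\subdiff F^*$ with $K\realoptx \in \subdiff F^*(\realopty)$ to obtain $\iprod{K\realoptx}{y^N-\realopty}_{\SigmaTest_N\Sigma_N}$.

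The final step is to aggregate these extra linear contributions with those already present in \eqref{eq:preliminary-gap} and verify that they reconstruct the cross-terms $\zeta_{*,N}\iprod{\realopty}{K\tilde x_{*,N}} - \zeta_{*,N}\iprod{\tilde y_{*,N}}{K\realoptx}$ of $\zeta_{*,N}\gap(\tilde x_{*,N},\tilde y_{*,N})$. I expect this tedious but routine cancellation, hinging on \eqref{eq:cond-etatwo} to equate $\E[\TauTest_i\Tau_i]$ with $\E[\SigmaTest_i\Sigma_i]$, to be the main obstacle: the boundary corrections at $i=0$ (primal) and $j=N$ (dual) must precisely balance the mismatch between $\SigmaTest_{i+1}\Sigma_{i+1}$ appearing in \eqref{eq:preliminary-gap} and $\SigmaTest_i\Sigma_i$ appearing in $\tilde y_{*,N}$. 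Once this algebraic accounting is carried out, the claim follows by combining the four pieces.
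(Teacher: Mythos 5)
Your proposal is correct and follows essentially the same route as the paper. The paper's proof introduces an explicitly shifted preliminary gap $\gap'_{*,i+1}$ (with $\SigmaTest_i\Sigma_i$ paired with $\thisy$), shows $\sum_{i=0}^{N-1}\E[\gap'_{i+1}] \ge \sum_{i=1}^{N-1}\E[\gap'_{*,i+1}]$ by absorbing the two boundary terms into the monotonicity inequalities \eqref{eq:g-strong-monotone} and \eqref{eq:f-monotone} (with $\Gamma=0$) using $-K^*\realopty \in \subdiff[G+J](\realoptx)$ and $K\realoptx \in \subdiff F^*(\realopty)$, and then proceeds as in \cref{lemma:gap-transform}; you do exactly the same, just phrased term by term rather than through a named auxiliary quantity, and you correctly identify that the boundary corrections cancel against the cross-terms already present in \eqref{eq:preliminary-gap}, making them nonnegative and droppable.
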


\begin{proof}
    Shifting indices of $y^i$ by one compared to $\gap'_{i+1}$, we define
    \begin{equation*}
        \begin{split}
        \gap'_{*,i+1}
        & \defeq
        \iprod{\subdiff [G+J](\nextx)}{\nextx - \realoptx}_{\TauTest_i\Tau_i}
        + \iprod{\subdiff F^*(\thisy)}{\Sigma_{i}^*\SigmaTest_{i}^*(\thisy - \realopty)}
        \\ & \phantom{ \defeq }
        - \iprod{\realopty}{(K\Tau_i^*\TauTest_i^* - \SigmaTest_{i}\Sigma_{i}K)\realoptx} 
        - \iprod{\thisy}{\SigmaTest_{i}\Sigma_{i} K \realoptx}
        + \iprod{\realopty}{K \Tau_i^*\TauTest_i^*\nextx}.
        \end{split}
    \end{equation*}
    Reorganising terms, therefore
    \[
        \begin{split}
        \sum_{i=0}^{N-1} \gap'_{i+1}(\nextx,\nexty)
        &
        =
        \iprod{\subdiff [G+J](x^{1})-K^*\realopty}{x^{1} - \realoptx}_{\TauTest_{0}\Tau_{0}}
        \\ & \phantom{ = }
        +\iprod{\subdiff F^*(y^{N})+K\realoptx}{y^N - \realopty}_{\SigmaTest_{N}\Sigma_{N}}
        + \sum_{i=1}^{N-1} \gap'_{*, i+1}(\nextx,\nexty).
        \end{split}
    \]
    By virtue of $0 \in H(\realoptu)$, we have $K^*\realopty \in \subdiff G(\realoptx)$, and $-K\realoptx \in \subdiff F^*(\realopty)$. Estimating with \eqref{eq:g-strong-monotone} and \eqref{eq:f-monotone}, and afterwards taking the expectation, we therefore obtain
    \[
    	\sum_{i=0}^{N-1} \E[\gap'_{i+1}(\nextx,\nexty)]
    	\ge 
    	\sum_{i=1}^{N-1} \E[\gap'_{*,i+1}(\nextx,\nexty)].
    \]
    From here we may proceed analogously to the proof of \cref{lemma:gap-transform}.
\end{proof}

\subsection{Final gap estimates}
\label{sec:gap-conversion}

As now convert the abstract ergodic conditions \eqref{eq:g-convex-abstract} and \eqref{eq:f-convex-abstract} into ergodic strong convexity and smoothness conditions that can be derived from the corresponding standard properties in block-separable cases.

\cbstart Recall the spaces of operator $\TauMonotoneSpace$ and $\SigmaMonotoneSpace$ from \cref{sec:saddle}. \cbend
We assume for all $N \ge 1$ that whenever $\TauTilde_{i}\, (\defeq\TauTest_i\Tau_i) \in \Random(\TauMonotoneSpace)$ and $\nextx \in \Random(X)$ for each $i=0,\ldots,N-1$ with $\sum_{i=0}^{N-1} \E[\TauTilde_i]=I$, then for some $0 \le \Gamma \in \L(X; X)$  we have the \cbstart \term{ergodic strong convexity}\cbend
\begin{equation}
    \label{eq:g-strong-convex}
    \tag{G-EC}
    G(\realoptx)-
    G\Biggl(\sum_{i=0}^{N-1} \E[\TauTilde_{i}^*\nextx]\Biggr)
    \ge
    \sum_{i=0}^{N-1} \E\bigl[\iprod{\subdiff  G(\nextx)}{\realoptx-\nextx}_{\TauTilde_{i}}
    +
     \frac{1}{2}\norm{\realoptx-\nextx}_{\TauTilde_i\Gamma}^2\bigr].
\end{equation}
Analogously, we assume for $\SigmaTilde_{i+1}\, (\defeq\SigmaTest_{i+1}\Sigma_{i+1}) \in \Random(\SigmaMonotoneSpace)$ and $\nexty \in \Random(Y)$ for each $i=0,\ldots,N-1$ with $\sum_{i=0}^{N-1} \E[\SigmaTilde_{i+1}]=I$ the \cbstart\term{ergodic convexity}\cbend
\begin{equation}
    \label{eq:f-convex}
    \tag{F$^*$-EC}
    F^*(\realopty)-F^*\Biggl(\sum_{i=0}^{N-1} \E[\SigmaTilde_{i+1}^*\nexty]\Biggr)
    \ge
    \sum_{i=0}^{N-1} \E\bigl[\iprod{\subdiff F^*(\nexty)}{\realopty-\nexty}_{\SigmaTilde_{i+1}}
    \bigr].
\end{equation}
Finally, we assume $J$ is differentiable and satisfies for some parameters $L_i \ge 0$ the \cbstart\term{3-point ergodic smoothness}\cbend condition
\begin{equation}
    \label{eq:j-smooth-ergodic}
    \tag{J-ES}
    J(\realoptx)-
    J\Biggl(\sum_{i=0}^{N-1} \E[\TauTilde_{i}^*\nextx]\Biggr)
    \ge
    \sum_{i=0}^{N-1} \E\bigl[\iprod{\grad  J(\thisx)}{\realoptx-\nextx}_{\TauTilde_{i}}
    -\frac{L_i}{2}\norm{\nextx-\thisx}_{\TauTilde_i}^2\bigr].
\end{equation}
The shifting refers to uses of $\thisx$, where a typical definition of smoothness would use $\realoptx$.

\begin{example}[Block-separable structure, ergodic convexity]
    \label{example:ergodic-convexity-block-separable}
	Let $G$ and $\mathcal{T}$ have the separable structure of \cref{example:separable}. We claim that the ergodic strong convexity \eqref{eq:g-strong-convex} holds.
	Indeed, let us introduce $\TauTilde_i \defeq \sum_{j=1}^m \tilde\tau_{j,i} P_j \ge 0$, satisfying $\sum_{i=0}^{N-1} \E[\tilde\tau_{j,i}]=1$ for each $j=1,\ldots,m$. Splitting \eqref{eq:g-strong-convex} into separate inequalities over all $j=1,\ldots,m$, and using the strong convexity of $G_j$, we see \eqref{eq:g-strong-convex} to be true with $\Gamma = \sum_{j=1}^m \gamma_j P_j$ if for all $j=1,\ldots,m$ holds
	\begin{equation}
	    \label{eq:g-strong-convex-j-convexity-used}
        G_j(P_j\realoptx)
        -
	    G_j\Biggl(\sum_{i=0}^{N-1} \E[\tilde\tau_{j,i} P_j\nextx]\Biggr)
	    \ge
	    \sum_{i=0}^{N-1} \E\left[
	        \tilde\tau_i\left(G_j(P_j \realoptx)-G_j(P_j \nextx)\right)
	    \right].
	\end{equation}
	The right hand side can also be written as $\int_{\Omega^N} G_j(P_j \realoptx)-G_j(P_j x^i(\omega)) \d\mu^N(i,\omega)$ for the measure $\mu^N \defeq \tilde \tau_j \sum_{i=0}^{N-1} \delta_i \times \P$ on the domain $\Omega^N \defeq \{0,\ldots,N-1\} \times \Omega$. Using our assumption $\sum_{i=0}^{N-1} \E[\tilde\tau_{j,i}]=1$, we deduce $\mu^N(\Omega^N)=1$. 
	An application of Jensen's inequality now shows \eqref{eq:g-strong-convex-j-convexity-used}. Therefore \eqref{eq:g-strong-convex} is satisfied for $G=G$.
\end{example}

\begin{example}[Ergodic smoothness for smooth $J$]
    \label{example:ergodic-smoothness}
    \cbstart%
    If $J \in C(x)$ has $L$-Lipschitz gradient, then \cref{lemma:smoothness} shows the three-point inequality
    \begin{equation*}
        J(\realoptx)-J(\nextx) \ge \iprod{\grad J(\thisx)}{\realoptx-\nextx} -  \frac{L}{2}\norm{\nextx-\thisx}^2.
    \end{equation*}
    If $\tilde\Tau_i=\tilde\tau_i I$ for scalar $\tilde\tau_i I$, then proceeding as in \eqref{eq:g-strong-convex-j-convexity-used} in \cref{example:ergodic-convexity-block-separable}, we deduce the 3-point ergodic smoothness \eqref{eq:j-smooth-ergodic} with $L_i=L$. Similarly, we can treat the block-separable case  $J=\sum_{i=0}^m J_j(P_j x)$ when each $J_j$ individually has Lipschitz gradient.
    \cbend%
\end{example}

The next theorem is our main result for saddle point problems.
\cbstart
To clarify the statement of the theorem, which depends on various different combinations of several conditions in the definition of $\tilde g_N$, we recall here the rough meaning of each:
\begin{description}[labelwidth=6em,labelindent=\parindent,itemsep=1pt]
    \item[\eqrefpage{eq:convergence-fundamental-condition-iter}] Fundamental condition \eqref{eq:convergence-fundamental-condition-iter-h} for saddle point problems.
    \item[\eqrefpage{eq:g-strong-monotone}] Partial (testing and step length operator relative) strong monotonicity of $G$.
    \item[\eqrefpage{eq:f-monotone}] Partial monotonicity of $F^*$.
    \item[\eqrefpage{eq:j-coco}] Partial co-coercivity of $J$.
    \item[\eqrefpage{eq:g-strong-convex}] Partial strong ergodic convexity of $G$.
    \item[\eqrefpage{eq:f-convex}] Partial ergodic convexity of $F^*$.
    \item[\eqrefpage{eq:j-smooth-ergodic}] Partial 3-point ergodic smoothness of $J$.
    \item[\eqrefpage{eq:cond-eta}] First alternative primal--dual coupling condition
    \item[\eqrefpage{eq:cond-etatwo}] Second alternative primal--dual coupling condition
\end{description}
\cbend

\begin{theorem}
    \label{thm:convergence-result-stochastic-saddle}
    Let $H$ have the structure \eqref{eq:h} and assume $\realoptu \in \inv H(0)$.
    For each $i \in \N$, let $\Tau_i, \TauTest_i \in \Random(\L(X; X))$ and $\Sigma_{i+1}, \SigmaTest_{i+1} \in \Random(\L(Y; Y))$ be such that $\TauTest_i\Tau_i \in \Random(\TauMonotoneSpace)$ and $\SigmaTest_{i+1}\Sigma_{i+1} \in \Random(\SigmaMonotoneSpace)$.
    Define $\Test_{i+1}$ and $\Step_{i+1}$ through \eqref{eq:test}.
    Also take $\AltBregFn_{i+1} \in \Random(X \times Y \to X \times Y)$ and $\Precond_{i+1} \in \Random(\linear(X \times Y; X \times Y))$.
    Suppose \eqref{eq:pp} is solvable for $\{\nextu\}_{i \in \N} \subset X \times Y$.
    Assuming one of the following cases to hold with $0 \le \Gamma \in \linear(X; X)$ and $L_i \ge 0$, let
    \[
        \tilde g_N \defeq
        \begin{cases}
             0, & \tilde\Gamma=\Gamma, \ \text{\eqref{eq:g-strong-monotone}, \eqref{eq:f-monotone} and \eqref{eq:j-coco} hold},
             \\
             \zeta_N \gap(\tilde x_N, \tilde y_N), & \tilde\Gamma=\Gamma/2; \text{\eqref{eq:g-strong-convex}, \eqref{eq:f-convex}, \eqref{eq:j-smooth-ergodic}, and \eqref{eq:cond-eta} hold},
             \\
             \zeta_{*,N} \gap(\tilde x_{*,N}, \tilde y_{*,N}), & \tilde\Gamma=\Gamma/2; \text{\eqref{eq:g-strong-monotone} for $\Gamma=0$, \eqref{eq:f-monotone}}, \\ &\text{\quad\eqref{eq:g-strong-convex}, \eqref{eq:f-convex}, \eqref{eq:j-smooth-ergodic}, and \eqref{eq:cond-eta} hold}.
        \end{cases}
    \]
    If \cbstart for all $i \in \N$, \cbend $\Test_{i+1}\Precond_{i+1}$ is self-adjoint and \eqref{eq:convergence-fundamental-condition-iter} holds for $\tilde\Gamma$ given above, then so does the following \term{ergodic gap descent inequality}:
    \begin{equation*}
        \label{eq:convergence-result-stochastic-saddle}
        \tag{DI-$\gap$}
        \E\Bigl[\frac{1}{2}\norm{u^N-\realoptu}^2_{\Test_{N+1}\Precond_{N+1}}\Bigr]
        + \tilde g_N
        \le
        \E\left[\frac{1}{2}\norm{u^0-\realoptu}_{\Test_1 \Precond_1}^2\right] + \sum_{i=0}^{N-1} \E[\Penalty_{i+1}(\realoptu)]
        \cbstart \quad (N \ge 1). \cbend
    \end{equation*}
\end{theorem}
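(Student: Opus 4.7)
The plan is to treat the three cases for $\tilde g_N$ in parallel, always invoking the relevant stepwise results in their expectation-taken forms so that \cref{cor:convergence-result-main-h-stoch} can be applied in place of \cref{thm:convergence-result-main-h}.

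For the first case ($\tilde g_N = 0$), the strategy is essentially verbatim \cref{thm:convergence-result-saddle}: the fundamental condition \eqref{eq:convergence-fundamental-condition-iter} with $\tilde\Gamma=\Gamma$ and $L_i\equiv L/2$, combined with the partial strong monotonicities \eqref{eq:g-strong-monotone}, \eqref{eq:f-monotone} and the partial co-coercivity \eqref{eq:j-coco}, yields the expected fundamental condition \eqref{eq:convergence-fundamental-condition-iter-h-stoch}. \Cref{cor:convergence-result-main-h-stoch} then immediately gives \eqref{eq:convergence-result-stochastic-saddle}.

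For the second case, the plan is first to observe that $\GammaLift{i+1}(\Gamma/2)-\GammaLift{i+1}(0)$ has only a $\Tau_i\Gamma$ block in the primal--primal position, so that \eqref{eq:convergence-fundamental-condition-iter} with $\tilde\Gamma=\Gamma/2$ is exactly the gap inequality \eqref{eq:convergence-fundamental-condition-iter-gap} shifted by a $\tfrac{1}{2}\norm{\nextx-\realoptx}^2_{\TauTest_i\Tau_i\Gamma}$ term, a $\tfrac{L_i\tau_i\tauTest_i}{2}\norm{\nextx-\thisx}^2$ term, and an $\iprod{\BregFn_{i+1}^J(\nextu)}{\nextu-\realoptu}_{\Test_{i+1}}$ term. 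Absorbing these three pieces into an auxiliary $\tilde\Penalty_{i+1}$ and applying the expectation-taken \cref{lemma:convergence-result-gap} yields a preliminary-gap descent inequality. The crux is then to invoke \cref{lemma:gap-transform} under the first coupling \eqref{eq:cond-eta}, with the abstract conditions \eqref{eq:g-convex-abstract}, \eqref{eq:f-convex-abstract} instantiated from the ergodic convexities \eqref{eq:g-strong-convex}, \eqref{eq:f-convex} and the ergodic smoothness \eqref{eq:j-smooth-ergodic} taken at $\TauTilde_i \defeq \inv\zeta_N\TauTest_i\Tau_i$ and $\SigmaTilde_{i+1}\defeq \inv\zeta_N\SigmaTest_{i+1}\Sigma_{i+1}$. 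The $J$-piece of \eqref{eq:j-smooth-ergodic} is what makes the previous shift of $\grad J(\nextx)$ to $\grad J(\thisx)$ (via $\BregFn_{i+1}^J$) compatible with the preliminary gap, and the strong-convexity quadratic $\tfrac{1}{2}\norm{\realoptx-\nextx}^2_{\TauTilde_i\Gamma}$ from \eqref{eq:g-strong-convex} (after scaling by $\zeta_N$) together with the smoothness quadratic from \eqref{eq:j-smooth-ergodic} are precisely the two terms needed to cancel the absorbed contributions. This leaves $\zeta_N\gap(\tilde x_N,\tilde y_N)$ on the left of the final descent inequality and the original $\Penalty_{i+1}$ on the right.

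The third case proceeds in exactly the same way but uses \cref{lemma:gap-transformtwo} in place of \cref{lemma:gap-transform}, relying on the second coupling \eqref{eq:cond-etatwo} and using the plain (non-strong) partial monotonicities \eqref{eq:g-strong-monotone} with $\Gamma=0$ and \eqref{eq:f-monotone} to discard the boundary terms $\iprod{\subdiff [G+J](x^1)-K^*\realopty}{\cdot}$ and $\iprod{\subdiff F^*(y^N)+K\realoptx}{\cdot}$ produced by the index shift in the proof of that lemma. The main obstacle throughout is the careful accounting of the factor-of-two differences between the differential conditions \eqref{eq:g-strong-monotone}, \eqref{eq:f-monotone}, \eqref{eq:j-coco} and their functional counterparts \eqref{eq:g-strong-convex}, \eqref{eq:f-convex}, \eqref{eq:j-smooth-ergodic}: these are exactly what permits the reduction from $\tilde\Gamma=\Gamma$ in Case~1 to $\tilde\Gamma=\Gamma/2$ in Cases~2 and~3, and any miscounting of the halved quantities will leave the extra quadratic terms uncancelled and the descent inequality unclosed.
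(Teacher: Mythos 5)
Your proposal reproduces the paper's proof essentially verbatim: for the first case take expectations in \cref{thm:convergence-result-saddle}; for the remaining two, absorb the differences between \eqref{eq:convergence-fundamental-condition-iter} (with $\tilde\Gamma=\Gamma/2$) and \eqref{eq:convergence-fundamental-condition-iter-gap} into a modified penalty $\tilde\Penalty_{i+1}$, invoke \cref{lemma:convergence-result-gap} for the preliminary-gap descent, then convert via \cref{lemma:gap-transform} or \cref{lemma:gap-transformtwo} using the ergodic convexity/smoothness conditions \eqref{eq:g-strong-convex}, \eqref{eq:f-convex}, \eqref{eq:j-smooth-ergodic}, with the partial monotonicities at $\Gamma=0$ handling the boundary terms of the index shift in the $\zeta_{*,N}$ case. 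One small remark in your favour: you correctly use the second coupling condition \eqref{eq:cond-etatwo} for the third ($\zeta_{*,N}$) case, which is what \cref{lemma:gap-transformtwo} requires and what the paper's own proof does, even though the theorem statement as printed lists \eqref{eq:cond-eta} there — that appears to be a typo in the paper.
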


\begin{proof}
	The case $\tilde g_N=0$ is simply the result of taking the expectation in the claim of \cref{thm:convergence-result-saddle}; \cbstart
    compare how \cref{cor:convergence-result-main-h-stoch} follows form \cref{thm:convergence-result-main-h}. \cbend
    %
    Regarding the remaining two cases, clearly \eqref{eq:convergence-fundamental-condition-iter} implies \eqref{eq:convergence-fundamental-condition-iter-gap} for
    \[
        \begin{split}
        \tilde\Penalty_{i+1}(\realoptu)
        &
        \defeq
        \Penalty_{i+1}(\realoptu)
        - \frac{1}{2}\norm{\realoptx-\nextx}_{\TauTest_i\Tau_i\tilde\Gamma}+\frac{L_i}{2}\norm{\nextx-\thisx}_{\TauTest_i\Tau_i}^2
        \\ & \phantom{ \defeq }
        +\iprod{\grad J(\thisx)-\grad J(\nextx)}{\realoptx-\nextx}_{\TauTest_i\Tau_i}.
        \end{split}
    \]
    Thus \cref{lemma:convergence-result-gap} shows the descent estimate \eqref{eq:convergence-result-gap}.

    The ergodic strong convexity \eqref{eq:g-strong-convex} and \eqref{eq:j-smooth-ergodic} imply \eqref{eq:g-convex-abstract} for
    \[
        \nexxt{\delta_{G+J,N}} \defeq \frac{1}{2}\norm{\realoptx-\nextx}_{\TauTilde_i\Gamma}-\frac{L_i}{2}\norm{\nextx-\thisx}_{\TauTilde_i}^2+\iprod{\grad J(\thisx)-\grad J(\nextx)}{\realoptx-\nextx}_{\TauTilde_i},
    \]
    where $\TauTilde_i \in \Random(\TauMonotoneSpace)$.
    Likewise the ergodic convexity \eqref{eq:f-convex} implies \eqref{eq:f-convex-abstract} for $\nexxt{\delta_{F^*,N}} \defeq 0$.
    When the first primal--dual coupling condition \eqref{eq:cond-eta} holds, we take above $\TauTilde_i=\inv\zeta_N \TauTest_i\Tau_i$, which we have assumed to belong to $\Random(\TauMonotoneSpace)$. \cbstart
    If the alternative second primal--dual coupling condition \eqref{eq:cond-etatwo} holds, we take $\TauTilde_i=\inv\zeta_{*,N} \TauTest_i\Tau_i$. \cbend
    Therefore, \eqref{eq:convergence-result-gap} can be rewritten
    \begin{equation}
        \label{eq:convergence-result-stochastic-saddle-pre}
        \frac{1}{2}\norm{u^N-\realoptu}^2_{\Test_{N+1}\Precond_{N+1}}
        + g_N'
        \le
        \frac{1}{2}\norm{u^0-\realoptu}^2_{\Test_{1}\Precond_{1}}
        +
        \sum_{i=0}^{N-1} \Penalty_{i+1}(\realoptu)
    \end{equation}    
    for
    \[
        g_N' \defeq \sum_{i=0}^{N-1} \bigl[
            \gap'_{i+1}(\nextx,\nexty)
            +\zeta_{N}\nexxt{\delta_{G+J,N}}+\zeta_{N}\nexxt{\delta_{F^*,N}}
            \bigr].
    \]
    Now we just take the expectation in \eqref{eq:convergence-result-stochastic-saddle-pre}, and apply \cref{lemma:gap-transform} \cbstart or  \cref{lemma:gap-transformtwo}. \cbend
\end{proof}


\subsection{Primal--dual examples revisited}

We now study gap estimates for several of the examples from \cref{sec:saddle}.
\cbstart
We start by verifying partial monotonicity and ergodic convexity and smoothness conditions for in the case of simple deterministic scalar step length and testing operators: the block-separable and stochastic case we leave to the companion paper \cite{tuomov-blockcp}.
\cbend


\cbstart

Similarly to \cref{cor:convergence-result-saddle-nogap} of \cref{thm:convergence-result-saddle}, we now have the following non-stochastic scalar corollary of \cref{thm:convergence-result-stochastic-saddle}.
From the corollary, if $\Penalty_{i+1} \le 0$, we clearly get the convergence of $\gap(\tilde x_{*,N}, \tilde y_{*,N})$ or $\gap(\tilde x_N, \tilde y_N)$ to zero at the respective rate $O(1/\zeta_{*,N})$ or $(1/\zeta_{N})$.

\begin{corollary}
    \label{cor:convergence-result-saddle}
    Let $H$ have the structure \eqref{eq:h} and assume $\realoptu \in \inv H(0)$.
    Assume that $G$ is ($\gamma$-strongly) convex and $\grad J$ is $L$-Lipschitz for some $\gamma \ge 0$ and $L>0$.
    For each $i \in \N$, assume the structure \eqref{eq:pd-setup-scalar} for $\tau_i,\tauTest_i,\sigma_{i+1},\sigmaTest_{i+1} > 0$.
    Also take $\AltBregFn_{i+1} \in X \times Y \to X \times Y$ and $\Precond_{i+1} \in \linear(X \times Y; X \times Y)$.
    Suppose \eqref{eq:pp} is solvable for $\{\nextu\}_{i \in \N} \subset X \times Y$.
    Suppose for all $i \in \N$ that $\tauTest_i\tau_i=\sigmaTest_i\sigma_i$, that $\Test_{i+1}\Precond_{i+1}$ is self-adjoint, and that the fundamental condition for saddle-point problems \eqref{eq:convergence-fundamental-condition-iter} holds for $\tilde\Gamma=(\gamma/2)I$ and $L_i \equiv L$. Then
    \begin{equation*}
        \frac{1}{2}\norm{u^N-\realoptu}^2_{\Test_{N+1}\Precond_{N+1}}
        + \zeta_{*,N}\gap(\tilde x_{*,N}, \tilde y_{*,N})
        \le
        \frac{1}{2}\norm{u^0-\realoptu}_{\Test_1 \Precond_1}^2 + \sum_{i=0}^{N-1} \Penalty_{i+1}(\realoptu)
        \quad (N \ge 1).
    \end{equation*}
    If, instead, $\tauTest_i\tau_i=\sigmaTest_{i+1}\sigma_{i+1}$, then the gap expression is replaced by $\zeta_{N}\gap(\tilde x_{N}, \tilde y_{N})$.
\end{corollary}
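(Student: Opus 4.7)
The plan is to specialise \cref{thm:convergence-result-stochastic-saddle} to the deterministic scalar setting and invoke one of its two gap cases depending on which of the coupling hypotheses $\tauTest_i\tau_i=\sigmaTest_i\sigma_i$ or $\tauTest_i\tau_i=\sigmaTest_{i+1}\sigma_{i+1}$ holds. First, I would take $\TauMonotoneSpace \defeq [0,\infty)I \subset \linear(X;X)$ and $\SigmaMonotoneSpace \defeq [0,\infty)I \subset \linear(Y;Y)$; the scalar nonnegative choices $\TauTest_i\Tau_i = \tauTest_i\tau_i I$ and $\SigmaTest_{i+1}\Sigma_{i+1}=\sigmaTest_{i+1}\sigma_{i+1}I$ (both deterministic, hence constant random variables) automatically belong to these monotone spaces, so the measurability and inclusion conditions required by the theorem are trivially met.

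Next, I would translate the classical convexity/smoothness hypotheses on $G$, $F^*$ and $J$ into the abstract partial and ergodic versions needed by the theorem. The $\gamma$-strong convexity of $G$ directly yields the partial strong monotonicity \eqref{eq:g-strong-monotone} with $\Gamma=\gamma I$, and by the Jensen-type averaging argument sketched in \cref{example:ergodic-convexity-block-separable} specialised to the single-block case $m=1$, it also yields the ergodic strong convexity \eqref{eq:g-strong-convex} with the same $\Gamma$. Convexity of $F^*$ analogously gives \eqref{eq:f-monotone} and \eqref{eq:f-convex}. The $L$-Lipschitz continuity of $\grad J$ gives the partial co-coercivity \eqref{eq:j-coco}, while \cref{example:ergodic-smoothness} provides the 3-point ergodic smoothness \eqref{eq:j-smooth-ergodic} with $L_i \equiv L$.

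Then I would match the coupling conditions. If $\tauTest_i\tau_i = \sigmaTest_{i+1}\sigma_{i+1}$, setting $\bar\eta_i \defeq \tauTest_i\tau_i$ gives exactly \eqref{eq:cond-eta}, which is the hypothesis of the second case of \cref{thm:convergence-result-stochastic-saddle}, producing the ergodic gap $\zeta_N \gap(\tilde x_N,\tilde y_N)$. If instead $\tauTest_i\tau_i=\sigmaTest_i\sigma_i$, setting $\bar\eta_i \defeq \tauTest_i\tau_i$ gives \eqref{eq:cond-etatwo}; since here $G$ is at least convex (partial monotonicity with $\Gamma=0$), the third case of \cref{thm:convergence-result-stochastic-saddle} applies and yields $\zeta_{*,N}\gap(\tilde x_{*,N},\tilde y_{*,N})$. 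In either case, the assumed fundamental inequality \eqref{eq:convergence-fundamental-condition-iter} is postulated with the correct $\tilde\Gamma=\Gamma/2=(\gamma/2)I$ and $L_i\equiv L$, matching the theorem's requirement.

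The main ``obstacle'' is really just bookkeeping: one must verify that in the scalar deterministic case the Jensen's-inequality step underlying the ergodic convexity \eqref{eq:g-strong-convex} and ergodic smoothness \eqref{eq:j-smooth-ergodic} goes through with the correct convex combination weights $\tilde\tau_i = \inv\zeta_N\tauTest_i\tau_i$ (respectively $\inv\zeta_{*,N}\tauTest_i\tau_i$), which satisfy $\sum_i \tilde\tau_i =1$ by definition of $\zeta_N$ (respectively $\zeta_{*,N}$). Once these identifications are in place, the conclusion of \cref{cor:convergence-result-saddle} is a direct restatement of the ergodic gap descent inequality \eqref{eq:convergence-result-stochastic-saddle} of \cref{thm:convergence-result-stochastic-saddle}, with expectations dropped due to determinism.
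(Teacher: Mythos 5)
Your proposal is correct and follows essentially the same route as the paper's own proof: both specialise \cref{thm:convergence-result-stochastic-saddle} to the deterministic scalar setting with $\TauMonotoneSpace=\SigmaMonotoneSpace=[0,\infty)I$, derive the partial and ergodic conditions \eqref{eq:g-strong-monotone}, \eqref{eq:g-strong-convex}, \eqref{eq:f-monotone}, \eqref{eq:f-convex}, \eqref{eq:j-coco}, \eqref{eq:j-smooth-ergodic} from classical convexity and Lipschitz smoothness via the Jensen arguments of \cref{example:ergodic-convexity-block-separable,example:ergodic-smoothness}, and match $\tauTest_i\tau_i=\sigmaTest_i\sigma_i$ to \eqref{eq:cond-etatwo} (third case) and $\tauTest_i\tau_i=\sigmaTest_{i+1}\sigma_{i+1}$ to \eqref{eq:cond-eta} (second case). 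The only cosmetic difference is that you supply \eqref{eq:g-strong-monotone} with $\Gamma=\gamma I$ where the third case of the theorem only asks for $\Gamma=0$ — this is harmless since the stronger condition implies the weaker.
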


\begin{proof}
    As in the proof of \cref{cor:convergence-result-saddle-nogap}, clearly $\TauTest_i\Tau_i \in \TauMonotoneSpace \defeq [0, \infty)I$ and $\SigmaTest_{i+1}\Sigma_{i+1} \in \SigmaMonotoneSpace \defeq [0, \infty)I$, so that the partial monotonicities \eqref{eq:f-monotone} and \eqref{eq:g-strong-monotone} (with $\Gamma=0$) hold by the monotonicity of the subdifferentials of $G$ and $F^*$. Similarly, the ergodic (strong) convexity \eqref{eq:g-strong-convex} of $G$ with $\Gamma=\gamma I$ and \eqref{eq:f-convex} of $F^*$ hold by a Jensen argument similar to \cref{example:ergodic-convexity-block-separable}. Likewise, the ergodic smoothness \eqref{eq:j-smooth-ergodic} holds by the three-point inequality \cref{eq:three-point-smoothness} and a Jensen argument similar to \cref{example:ergodic-smoothness}. Note that with everything deterministic, the expectations disappear.

    With this, the result follows immediately from \cref{thm:convergence-result-stochastic-saddle} for the second and third cases of $\tilde g_N$.
    The primal--dual coupling conditions \eqref{eq:cond-etatwo} and \eqref{eq:cond-eta} reduce to our respective conditions $\tauTest_i\tau_i=\sigmaTest_i\sigma_i$ and $\tauTest_i\tau_i=\sigmaTest_{i+1}\sigma_{i+1}$,
\end{proof}

In \cref{example:cpock,example:gist}, we proved \eqref{eq:convergence-fundamental-condition-iter} for the Chambolle--Pock method and the GIST with $\tilde\Gamma=\gamma I$ and $L_i \equiv L/2$. Now we have to do the same but with the factor-of-two different $\tilde\Gamma=(\gamma/2)I$ and $L_i \equiv L$.
The different $\tilde\Gamma$ will merely change the acceleration factor of the method. The larger $L_i$, on the other hand, will change the step length bound \eqref{eq:cpock-init-j} of the forward-step Chambolle--Pock, \cref{example:cpock-forward}, to
\begin{equation}
    \label{eq:cpock-init-j-gap}
    0 < \theta \defeq 1-2L\tau_0/(1-\tau_0\sigma_0\norm{K}^2),
\end{equation}
and the the bound $\norm{A} \le \sqrt{2}$ of the GIST of \cref{example:gist} to $\norm{A} \le 1$.

\begin{example}[Gap for Chambolle--Pock with a forward step]
	\label{example:cpock-gap}
    In the demonstration of \cref{example:cpock,example:cpock-forward}, we have seen the Chambolle--Pock method to satisfy $\tauTest_i\tau_i=\sigmaTest_i\sigma_i$ and the self-adjointness of $\Test_{i+1}\Precond_{i+1}$.
    As discussed above, \eqref{eq:convergence-fundamental-condition-iter} holds with $\Delta_{i+1} \le 0$ subject to the conditions $\tilde\gamma \in [0,\gamma/2]$ and \eqref{eq:cpock-init-j-gap}.
    We now have $\zeta_{*,N}=\sum_{i=1}^{N-1} \tauTest_i^{1/2}$.
    In the unaccelerated case ($\gamma=0$), we get $\zeta_{*,N} = N \tauTest_0^{1/2}$. Therefore, we get from \cref{cor:convergence-result-saddle} the $O(1/N)$ convergence of $\gap(\tilde x_{*,N}, \tilde y_{*,N})$ to zero.
    In the accelerated case ($\gamma>0$), $\tauTest_i$ is of the order $\Theta(i^2)$. Therefore also $\zeta_{*,N}$ is of the order $\Theta(N^2)$, so we get $O(1/N^2)$ convergence of $\gap(\tilde x_{*,N}, \tilde y_{*,N})$ to zero.
\end{example}

\begin{example}[Gap for GIST]
    In \cref{example:gist} we have seen the GIST to satisfy $\tau_i=\tauTest_i=\sigma_{i+1}=\sigmaTest_{i+1}=1$, the self-adjointness of $\Test_{i+1}\Precond_{i+1}$. Moreover, as discussed above, \eqref{eq:convergence-fundamental-condition-iter} with $\Delta_{i+1} \le 0$ if $\norm{A} \le 1$. It therefore has $\zeta_N = N-1$ and $\zeta_{*,N} = N$. 
    Consequently, \cref{cor:convergence-result-saddle} yields the $O(1/N)$ convergence of both $\gap(\tilde x_{*,N}, \tilde y_{*,N})$ and $\gap(\tilde x_N, \tilde y_N)$ to zero.
\end{example}

\cbend

\section*{Conclusion}

We have unified common convergence proofs of optimisation methods, employing the ideas of non-linear preconditioning and testing of the classical proximal point method. We have demonstrated that popular classical and modern algorithms can be presented in this framework, and their convergence, including convergence rates, proved with little effort. The theory was, however, not developed with existing algorithms in mind. It was developed to allow the development of new spatially adapted block-proximal methods in \cite{tuomov-blockcp}.
We will demonstrate there and in other works to follow, the full power of the theory.
For one, we did not yet fully exploit the fact that $\Step_{i+1}$ and $\Test_{i+1}$ are operators, to construct step-wise step lengths and acceleration.

\appendix

\section{Outer semicontinuity of maximal monotone operators}
\label{sec:maxmono-outersemi}

We could not find the following result explicitly stated in the literature, although it is hidden in, e.g., the proof of \cite[Theorem 1]{rockafellar1976monotone}.

\begin{lemma}
    \label{lemma:maxmono-outersemi}
    Let $H: \Space \setto \Space$ be maximal monotone on a Hilbert space $\Space$.
    Then $H$ is is weak-to-strong outer semicontinuous: for any sequence $\{\thisu\}_{i \in \N}$, and any $\this{z} \in H(\thisu)$ such that $\thisu \weakto u$ weakly, and $\this{z} \to z$ strongly, we have $z \in H(u)$.
\end{lemma}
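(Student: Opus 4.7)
The plan is to prove $z \in H(u)$ by invoking the maximality of $H$: it suffices to show that the pair $(u, z)$ is monotonically related to every point of the graph of $H$, i.e., that $\iprod{z - w}{u - v} \ge 0$ for all $(v, w)$ with $w \in H(v)$. If this holds, then the graph of $H$ enlarged by $(u, z)$ is still the graph of a monotone operator, and maximality forces $(u, z)$ to have already been in $\graph H$.

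To establish the inequality, I would fix an arbitrary $(v, w)$ with $w \in H(v)$ and apply the monotonicity of $H$ to the pairs $(\thisu, \this{z})$ and $(v, w)$, giving $\iprod{\this{z} - w}{\thisu - v} \ge 0$ for every $i \in \N$. The conclusion then reduces to passing to the limit in this inner product under the mixed convergence hypotheses $\this{z} \to z$ strongly and $\thisu \weakto u$ weakly.

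The only computational step is the limit itself, and the standard trick handles it: write
\[
    \iprod{\this{z}}{\thisu} - \iprod{z}{u}
    = \iprod{\this{z} - z}{\thisu} + \iprod{z}{\thisu - u}.
\]
Weak convergence makes $\{\thisu\}_{i \in \N}$ bounded, so the first term is dominated by $\|\this{z}-z\|\cdot\sup_i\|\thisu\| \to 0$, and the second tends to $0$ by the definition of weak convergence. Combined with $\iprod{w}{\thisu} \to \iprod{w}{u}$ (weak) and $\iprod{\this{z}}{v} \to \iprod{z}{v}$ (strong), this gives $\iprod{\this{z}-w}{\thisu-v} \to \iprod{z-w}{u-v} \ge 0$.

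The main ``obstacle'' is essentially only the bookkeeping to justify the limit of the bilinear expression under combined strong/weak convergence; the maximality argument itself is a one-line invocation. No new concepts are required beyond the definition of maximal monotonicity and the fact that weak convergence implies boundedness.
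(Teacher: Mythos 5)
Your proof is correct and follows essentially the same route as the paper's: use the monotonicity of $H$ against an arbitrary graph point $(v,w)$, pass to the limit in the resulting inner-product inequality by exploiting the boundedness of the weakly convergent sequence $\{\thisu\}$, and then invoke maximality to conclude $z \in H(u)$. The only difference is presentational — the paper replaces $\thisz$ by $z$ up to an error $C\norm{z-\thisz}$ before passing to the limit, whereas you decompose the bilinear term directly — but the underlying argument is the same.
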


\begin{proof}
    By monotonicity, for any $u' \in \Space$ and $z' \in \Space$ holds $D_i \defeq \iprod{u'-\thisu}{z'-\thisz} \ge 0$. Since a weakly convergent sequence is bounded, we have $D_i \ge \iprod{u'-\thisu}{z'-z}-C\norm{z-\thisz}$ for some $C>0$ independent of $i$.
    Taking the limit, we therefore have $\iprod{u'-u}{z'-z} \ge 0$. If we had $z \not \in H(u)$, this would contradict that $H$ is maximal, i.e., its graph not contained in the graph of any monotone operator.
\end{proof}

\cbstart

\section{Three-point inequalities}
\label{sec:three-point}

The following three-point formulas are central to handling forward steps with respect to smooth functions.

\begin{lemma}
    \label{lemma:smoothness}
    If $J \in \convex(X)$ has $L$-Lipschitz gradient. Then
    \begin{equation}
        \label{eq:three-point-hypomonotonicity}
        \iprod{\grad J(z)-\grad J(\realoptx)}{x-\realoptx} \ge
            -\frac{L}{4}\norm{x-z}^2
        \quad (\realoptx, z, x \in X),
    \end{equation}
    as well as
    \begin{equation}
        \label{eq:three-point-smoothness}
        \iprod{\grad J(z)}{x-\realoptx}
        \ge
        J(x)-J(\realoptx) -  \frac{L}{2}\norm{x-z}^2
        \quad (\realoptx, z, x \in X).
    \end{equation}
\end{lemma}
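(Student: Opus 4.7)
The plan is to deploy the standard toolkit for convex $L$-smooth functions: the descent lemma
\[
    J(y) \le J(x) + \iprod{\grad J(x)}{y-x} + \frac{L}{2}\norm{y-x}^2,
\]
the subgradient inequality $J(y) \ge J(x) + \iprod{\grad J(x)}{y-x}$, and the Baillon--Haddad co-coercivity
\[
    \iprod{\grad J(u)-\grad J(v)}{u-v} \ge \inv L \norm{\grad J(u)-\grad J(v)}^2.
\]
I would prove the value-based estimate \eqref{eq:three-point-smoothness} first, then use co-coercivity plus Young's inequality for the derivative-only estimate \eqref{eq:three-point-hypomonotonicity}.

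For \eqref{eq:three-point-smoothness} I would split
\[
    \iprod{\grad J(z)}{x-\realoptx}
    = \iprod{\grad J(z)}{x-z} + \iprod{\grad J(z)}{z-\realoptx},
\]
lower-bound the first summand by $J(x)-J(z)-\frac{L}{2}\norm{x-z}^2$ via the descent lemma applied at $z$, and lower-bound the second by $J(z)-J(\realoptx)$ via the subgradient inequality at $z$. Adding these two bounds telescopes $J(z)$ and yields \eqref{eq:three-point-smoothness} directly.

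For \eqref{eq:three-point-hypomonotonicity}, a naive combination of \eqref{eq:three-point-smoothness} with the gradient inequality at $\realoptx$ would only produce the factor $-L/2$, off by a factor of two. I would instead split
\[
    \iprod{\grad J(z)-\grad J(\realoptx)}{x-\realoptx}
    = \iprod{\grad J(z)-\grad J(\realoptx)}{z-\realoptx}
     + \iprod{\grad J(z)-\grad J(\realoptx)}{x-z}.
\]
Apply co-coercivity to the first term, giving the lower bound $\inv L\norm{\grad J(z)-\grad J(\realoptx)}^2$, and to the second term apply Young's inequality with weight tuned to cancel this contribution, namely
\[
    \bigabs{\iprod{\grad J(z)-\grad J(\realoptx)}{x-z}}
    \le \inv L\norm{\grad J(z)-\grad J(\realoptx)}^2 + \frac{L}{4}\norm{x-z}^2,
\]
obtained from $ab \le a^2/L + Lb^2/4$ with $a=\norm{\grad J(z)-\grad J(\realoptx)}$, $b=\norm{x-z}$. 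Summing, the two gradient-squared terms cancel exactly and leave $-\frac{L}{4}\norm{x-z}^2$.

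The only real choice-point is the Young constant, calibrated precisely so that the co-coercive and Young contributions of $\norm{\grad J(z)-\grad J(\realoptx)}^2$ annihilate. Everything else is mechanical, and no other ingredients beyond convexity and $L$-Lipschitz smoothness of $\grad J$ are needed.
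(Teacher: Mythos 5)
Your proposal is correct and matches the paper's proof in both ideas and the key computations: the same split $\iprod{\grad J(z)-\grad J(\realoptx)}{x-\realoptx} = \iprod{\grad J(z)-\grad J(\realoptx)}{z-\realoptx}+\iprod{\grad J(z)-\grad J(\realoptx)}{x-z}$ with Baillon--Haddad co-coercivity and the Young/Cauchy weight $ab \le a^2/L + Lb^2/4$ for the first inequality, and the descent lemma at $z$ combined with the subgradient inequality at $z$ (your split telescopes $J(z)$ exactly as the paper's sum of the two inequalities does) for the second. The only difference is cosmetic ordering of the two claims.
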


\begin{proof}
    Regarding the ``three-point hypomonotonicity'' \eqref{eq:three-point-hypomonotonicity}, the $L$-Lipschitz gradient implies co-coercivity (see \cite{bauschke2017convex} or \cref{sec:subspace-smoothness})
    \[
        \iprod{\grad J(z)-\grad J(\realoptx)}{z-\realoptx}
        \ge \inv L \norm{\grad J(z)-\grad J(\realoptx)}^2.
    \]
    Thus using Cauchy's inequality
    \[
        \begin{split}
        \iprod{\grad J(z)-\grad J(\realoptx)}{x-\realoptx}
        &
        =\iprod{\grad J(z)-\grad J(\realoptx)}{z-\realoptx}
          +\iprod{\grad J(z)-\grad J(\realoptx)}{x-z}
        \\
        &
        \ge -\frac{L}{4}\norm{x-z}^2.
        \end{split}
    \]

    To prove \eqref{eq:three-point-smoothness}, the Lipschitz gradient implies the smoothness or ``descent inequality'' (again, \cite{bauschke2017convex} or \cref{sec:subspace-smoothness})
    \begin{equation}
        \label{eq:smoothness}
        J(z)-J(x) \ge \iprod{\grad J(z)}{z-x} - \frac{L}{2}\norm{x-z}^2.
    \end{equation}
    By convexity $J(\realoptx)-J(z) \ge \iprod{\grad J(z)}{\realoptx-z}$.
    Summed, we obtain \eqref{eq:three-point-smoothness}.    
\end{proof}

\begin{lemma}
    \label{lemma:sc-smoothness}
    If $J \in \convex(X)$ has $L$-Lipschitz gradient and is $\gamma$-strongly convex. Then for any $\tau>0$ holds
    \begin{equation}
        \label{eq:three-point-hypomonotonicity-sc}
        \iprod{\grad J(z)-\grad J(\realoptx)}{x-\realoptx} \ge
            \frac{2\gamma-\tau L^2}{2}\norm{x-\realoptx}^2
            -\frac{1}{2\tau}\norm{x-z}^2
        \quad (\realoptx, z, x \in X),
    \end{equation}
    as well as
    \begin{equation}
        \label{eq:three-point-smoothness-sc}
        \iprod{\grad J(z)}{x-\realoptx}
        \ge
        J(x)-J(\realoptx) + \frac{\gamma-\tau L^2}{2}\norm{x-\realoptx}^2
            -\frac{1}{2\tau}\norm{x-z}^2
        \quad (\realoptx, z, x \in X).
    \end{equation}
\end{lemma}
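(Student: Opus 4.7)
My plan is to mimic the proof of \cref{lemma:smoothness}, using an extra term to exploit the strong convexity, and splitting each left-hand side through $\grad J(x)$ rather than directly so that we can separately exploit Lipschitzness of the gradient (via Young's inequality) and strong convexity.

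For \eqref{eq:three-point-hypomonotonicity-sc}, I would write
\[
    \iprod{\grad J(z)-\grad J(\realoptx)}{x-\realoptx}
    =
    \iprod{\grad J(z)-\grad J(x)}{x-\realoptx}
    + \iprod{\grad J(x)-\grad J(\realoptx)}{x-\realoptx}.
\]
For the first term, Cauchy--Schwarz, $L$-Lipschitzness of $\grad J$, and Young's inequality with parameter $\tau>0$ give
\[
    \iprod{\grad J(z)-\grad J(x)}{x-\realoptx}
    \ge
    -L\norm{z-x}\,\norm{x-\realoptx}
    \ge
    -\frac{\tau L^2}{2}\norm{x-\realoptx}^2-\frac{1}{2\tau}\norm{x-z}^2.
\]
For the second term, the $\gamma$-strong convexity of $J$ yields
\(
    \iprod{\grad J(x)-\grad J(\realoptx)}{x-\realoptx}
    \ge \gamma\norm{x-\realoptx}^2.
\)
Summing the two gives the desired \eqref{eq:three-point-hypomonotonicity-sc}.

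For \eqref{eq:three-point-smoothness-sc}, I would similarly split
\[
    \iprod{\grad J(z)}{x-\realoptx}
    = \iprod{\grad J(z)-\grad J(x)}{x-\realoptx}
      + \iprod{\grad J(x)}{x-\realoptx},
\]
estimate the first term exactly as above, and for the second use the standard strong convexity value inequality
\(
    \iprod{\grad J(x)}{x-\realoptx} \ge J(x)-J(\realoptx) + \frac{\gamma}{2}\norm{x-\realoptx}^2.
\)
Summing yields \eqref{eq:three-point-smoothness-sc}.

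No step presents a genuine obstacle; the only ``design choice'' is the Young's inequality split, and the parameter $\tau$ there is exactly the free parameter appearing in the statement, so the bookkeeping is forced. Note the factor-of-two differences between \eqref{eq:three-point-hypomonotonicity-sc} and \eqref{eq:three-point-smoothness-sc} in the coefficient of $\norm{x-\realoptx}^2$ ($2\gamma-\tau L^2$ vs.\ $\gamma-\tau L^2$) arise precisely because strong monotonicity gives a factor $\gamma$ while the value form of strong convexity only gives $\gamma/2$, mirroring the analogous factor-of-two pattern between \eqref{eq:three-point-hypomonotonicity} and \eqref{eq:three-point-smoothness} in \cref{lemma:smoothness}.
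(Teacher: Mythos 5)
Your proof is correct and matches the paper's own argument essentially term-for-term: the same split through $\grad J(x)$, strong monotonicity (resp.\ the value form of strong convexity) on one term, and Cauchy--Schwarz plus Lipschitzness plus Young's inequality on the other. Only the presentation order differs.
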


\begin{proof}
    To prove \eqref{eq:three-point-smoothness-sc}, using strong convexity,the Lipschitz gradient, and Cauchy's inequality, we have
    \[
        \begin{split}
        \iprod{\grad J(z)}{x-\realoptx}
        &
        =\iprod{\grad J(x)}{x-\realoptx}
          +\iprod{\grad J(z)-\grad J(x)}{x-\realoptx}
        \\
        & \ge J(x)-J(\realoptx) + \frac{\gamma}{2}\norm{x-\realoptx}^2 
        -\frac{1}{2\tau}\norm{x-z}^2 - \frac{\tau L^2}{2}\norm{x-\realoptx}^2.
        \end{split}
    \]

    Regarding \eqref{eq:three-point-hypomonotonicity-sc}, using the $\gamma$-strong monotonicity of $\grad J$, we estimate completely analogously
    \[
        \begin{split}
        \iprod{\grad J(z)-\grad J(\realoptx)}{x-\realoptx}
        &
        =\iprod{\grad J(x)-\grad J(\realoptx)}{x-\realoptx}
          +\iprod{\grad J(z)-\grad J(x)}{x-\realoptx}
        \\
        & \ge \gamma\norm{x-\realoptx}^2 
        -\frac{1}{2\tau}\norm{x-z}^2 - \frac{\tau L^2}{2}\norm{x-\realoptx}^2.
        \qedhere
        \end{split}
    \]
\end{proof}

Since smooth functions with a positive Hessian are locally convex, the above lemmas readily extend to this case, locally. In fact, we have following more precise result:

\begin{lemma}
    \label{lemma:c2-smoothness}
    Suppose $J \in C^2(X)$ with $\grad^2 J(\realoptx) > 0$ at given $\realoptx \in X$. Then for any $\tau \in (0, 2]$ and all $z, x, \eta \in X$, we have
    \begin{equation}
        \label{eq:three-point-hypomonotonicity-c2}
        \iprod{\grad J(z)-\grad J(\realoptx)}{x-\realoptx}
        \ge \frac{(1-\delta_{z,\eta})(2-\tau)}{2}\norm{x-\realoptx}^2_{\grad^2 J(\eta)}
        -\frac{1+\delta_{z,\eta}}{2\tau} \norm{x-z}^2_{\grad^2 J(\eta)}
    \end{equation}
    with
    \begin{equation}
        \label{eq:delta-eta}
        \delta_{z,\eta} \defeq \inf\left\{\delta \ge 0 \,\middle|\,
            \begin{array}{r}
                 (1-\delta)\grad^2 J(\eta) \le \grad^2 J(\zeta) \le (1+\delta)\grad^2 J(\eta) \\
                 \text{ for all } \zeta \in \closure \B(\norm{z-\realoptx}, \realoptx)
            \end{array}
            \right\}.
    \end{equation}
    If $x \in \closure \B(\norm{z-\realoptx}, \realoptx)$, then also
    \begin{equation}
        \label{eq:three-point-smoothness-c2}
        \iprod{\grad J(z)}{x-\realoptx}
        \ge
        J(x)-J(\realoptx) + \frac{(1-\delta_{z,\eta})(1-\tau)-2\delta_{z,\eta}}{2}\norm{x-\realoptx}_{\grad^2 J(\eta)}^2
            -\frac{1+\delta_{z,\eta}}{2\tau}\norm{x-z}_{\grad^2 J(\eta)}^2.
    \end{equation}

\end{lemma}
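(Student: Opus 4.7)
The plan is to reduce both inequalities to statements about a single self-adjoint operator obtained by integrating the Hessian along a line segment, and then apply weighted Cauchy inequalities in the appropriate inner product.

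First I would set $B \defeq \grad^2 J(\eta)$ and, by the fundamental theorem of calculus, write
\[
    \grad J(z) - \grad J(\realoptx) = A(z-\realoptx)
    \quad\text{where}\quad
    A \defeq \int_0^1 \grad^2 J(\realoptx + t(z-\realoptx))\d t.
\]
Since the segment from $\realoptx$ to $z$ lies in $\closure\B(\norm{z-\realoptx},\realoptx)$, the definition \eqref{eq:delta-eta} of $\delta_{z,\eta}$ gives the operator bounds $(1-\delta_{z,\eta})B \le A \le (1+\delta_{z,\eta})B$, and self-adjointness of each Hessian factor makes $A$ self-adjoint.

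To prove \eqref{eq:three-point-hypomonotonicity-c2}, I would work directly in the $A$-inner product. Splitting $z-\realoptx = (x-\realoptx) + (z-x)$ gives
\[
    \iprod{\grad J(z)-\grad J(\realoptx)}{x-\realoptx}
    = \iprod{z-\realoptx}{x-\realoptx}_A
    = \norm{x-\realoptx}_A^2 + \iprod{z-x}{x-\realoptx}_A.
\]
An AM--GM estimate with parameter $\tau>0$ bounds the cross term from below by $-\frac{\tau}{2}\norm{x-\realoptx}_A^2 - \frac{1}{2\tau}\norm{z-x}_A^2$, producing $\frac{2-\tau}{2}\norm{x-\realoptx}_A^2 - \frac{1}{2\tau}\norm{z-x}_A^2$. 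Since $\tau \in (0,2]$, the coefficient $\frac{2-\tau}{2}$ is non-negative, so replacing $A$ by its lower bound $(1-\delta_{z,\eta})B$ in the first term, and by its upper bound $(1+\delta_{z,\eta})B$ in the (negatively-weighted) second term, yields \eqref{eq:three-point-hypomonotonicity-c2}.

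For \eqref{eq:three-point-smoothness-c2}, I would use Taylor's theorem with integral remainder: for $x \in \closure\B(\norm{z-\realoptx},\realoptx)$,
\[
    J(x) - J(\realoptx) = \iprod{\grad J(\realoptx)}{x-\realoptx} + \iprod{C(x-\realoptx)}{x-\realoptx},
    \quad
    C \defeq \int_0^1 (1-t)\grad^2 J(\realoptx+t(x-\realoptx))\d t.
\]
The segment from $\realoptx$ to $x$ also lies in $\closure\B(\norm{z-\realoptx},\realoptx)$, so $\tfrac{1-\delta_{z,\eta}}{2}B \le C \le \tfrac{1+\delta_{z,\eta}}{2}B$. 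Combining this with the identity
\[
    \iprod{\grad J(z)}{x-\realoptx} - [J(x)-J(\realoptx)]
    = \iprod{A(z-\realoptx)}{x-\realoptx} - \iprod{C(x-\realoptx)}{x-\realoptx},
\]
and applying the same AM--GM step as above to $\iprod{A(z-\realoptx)}{x-\realoptx}$, the $\norm{x-\realoptx}^2_B$-coefficient becomes $\tfrac{(2-\tau)(1-\delta_{z,\eta})}{2} - \tfrac{1+\delta_{z,\eta}}{2} = \tfrac{(1-\delta_{z,\eta})(1-\tau)-2\delta_{z,\eta}}{2}$, while the $\norm{z-x}^2_B$ term retains the factor $-\tfrac{1+\delta_{z,\eta}}{2\tau}$, exactly matching \eqref{eq:three-point-smoothness-c2}.

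The only delicate point is keeping track of signs so that the operator inequalities $A \ge (1-\delta_{z,\eta})B$ and $A \le (1+\delta_{z,\eta})B$ are applied on the correct terms; the assumption $\tau \in (0,2]$ ensures $\frac{2-\tau}{2} \ge 0$ so the lower bound can be used without reversing inequalities, and the additional ball constraint $x \in \closure\B(\norm{z-\realoptx},\realoptx)$ in the second statement is exactly what is needed to control the remainder operator $C$.
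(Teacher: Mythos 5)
Your proof is correct and follows the same overall strategy as the paper: expand the gradient difference via the Hessian along the segment, split $z-\realoptx = (x-\realoptx)+(z-x)$, apply a weighted Cauchy inequality, then transfer the operator bounds from $\grad^2 J$ on the segment to $\grad^2 J(\eta)$ using the definition of $\delta_{z,\eta}$. One small point where you improve on the paper's own argument: the paper invokes the Lagrange form of the mean value theorem to write $\grad J(z)-\grad J(\realoptx) = \grad^2 J(\zeta)(z-\realoptx)$ for a single intermediate point $\zeta$, which is not generally valid for the vector-valued map $\grad J$ when $\dim X > 1$; your integral form $A = \int_0^1 \grad^2 J(\realoptx + t(z-\realoptx))\,\d t$ is the rigorous version, and since the integrand satisfies the same operator sandwich $(1-\delta_{z,\eta})\grad^2 J(\eta) \le \cdot \le (1+\delta_{z,\eta})\grad^2 J(\eta)$ pointwise, so does $A$, and the rest of the estimates go through unchanged. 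The same remark applies to your remainder operator $C$ for the second inequality (though there the scalar Taylor expansion the paper uses is actually fine), and the algebra $\tfrac{(2-\tau)(1-\delta_{z,\eta})}{2}-\tfrac{1+\delta_{z,\eta}}{2} = \tfrac{(1-\delta_{z,\eta})(1-\tau)-2\delta_{z,\eta}}{2}$ checks out.
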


\begin{proof}
    By Taylor expansion, for some $\zeta$ between $z$ and $\realoptx$, and any $\tau>0$, we have
    \begin{equation}
        \label{eq:three-point-hypomonotonicity-c2-proof}
        \begin{split}
        \iprod{\grad J(z)-\grad J(\realoptx)}{x-\realoptx}
        &
        =\iprod{\grad^2 J(\zeta)(z-\realoptx)}{x-\realoptx}
        \\
        &
        =\norm{x-\realoptx}^2_{\grad^2 J(\zeta)}
        +\iprod{\grad^2 J(\zeta)(z-x)}{x-\realoptx}
        \\
        &
        \ge \frac{2-\tau}{2}\norm{x-\realoptx}^2_{\grad^2 J(\zeta)}
        -\frac{1}{2\tau} \norm{x-z}^2_{\grad^2 J(\zeta)}.
        \end{split}
    \end{equation}
    Since $\zeta \in \closure \B(\norm{z-\realoptx}, \realoptx)$, by the definition of $\delta_{z,\eta}$, we obtain \eqref{eq:three-point-hypomonotonicity-c2}.

    Similarly, by Taylor expansion, for some $\zeta_0$ between $x$ and $\realoptx$, we have
    \begin{equation}
        \label{eq:three-point-smoothness-c2-proof}
        \iprod{\grad J(z)}{x-\realoptx} - J(x) + J(\realoptx)
        =
        \iprod{\grad J(z)-\grad J(\realoptx)}{x-\realoptx}
        -\frac{1}{2}\iprod{\grad^2 J(\zeta_0)(x-\realoptx)}{x-\realoptx}
    \end{equation}
    Using \eqref{eq:three-point-hypomonotonicity-c2-proof} we obtain
    \[
        \begin{split}
        \iprod{\grad J(z)}{x-\realoptx} - J(x) + J(\realoptx)
        &
        \ge \frac{1}{2}\norm{x-\realoptx}^2_{(2-\tau)\grad^2 J(\zeta) - \grad^2 J(\zeta_0)}
        -\frac{1}{2\tau} \norm{x-z}^2_{\grad^2 J(\zeta)}.
        \end{split}
    \]
    Using the assumption $x \in \closure \B(\norm{z-\realoptx}, \realoptx)$, we have $\zeta_0 \in \closure \B(\norm{z-\realoptx}, \realoptx)$. Hence we obtain \eqref{eq:three-point-smoothness-c2} by the definition of $\delta_{z,\eta}$ and $(1-\delta_{z,\eta})(2-\tau)-(1+\delta_{z,\eta})=(1-\delta_{z,\eta})(1-\tau)-2\delta_{z,\eta}$.
\end{proof}

We can also derive the following alternate result:

\begin{lemma}
    \label{lemma:c2x-smoothness}
    Suppose $J \in C^2(X)$ with $\grad^2 J(\realoptx) > 0$ at given $\realoptx \in X$. Then for all $z, x, \eta \in X$ we have
    \begin{equation}
        \label{eq:three-point-hypomonotonicity-c2x}
        \iprod{\grad J(z)-\grad J(\realoptx)}{x-\realoptx}
        \ge
        \frac{1-\delta_{z,\eta}}{2}\norm{x-\realoptx}^2_{\grad^2 J(\eta)}
        +
        \frac{1-\delta_{z,\eta}}{2}\norm{z-\realoptx}^2_{\grad^2 J(\eta)}
        -
        \frac{1}{2}\norm{x-z}^2_{\grad^2 J(\eta)}
    \end{equation}
    for $\delta_{z,\eta}$ given by \eqref{eq:delta-eta}.
    If $x \in \closure \B(\norm{z-\realoptx}, \realoptx)$, then also
    \begin{equation}
        \label{eq:three-point-smoothness-c2x}
        \begin{split}
        \iprod{\grad J(z)}{x-\realoptx}
        &
        \ge
        -\delta_{z,\eta}\norm{x-\realoptx}^2_{\grad^2 J(\eta)}
        +
        \frac{1-\delta_{z,\eta}}{2}\norm{z-\realoptx}^2_{\grad^2 J(\eta)}
        -
        \frac{1}{2}\norm{x-z}^2_{\grad^2 J(\eta)}
        \\ \MoveEqLeft[-1]
        + J(x)-J(\realoptx).
        \end{split}
    \end{equation}
\end{lemma}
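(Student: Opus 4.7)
The plan is to follow the proof template of \cref{lemma:c2-smoothness}, but with a sharper handling of the cross term so that the coefficient $\frac{1}{2}$ on $\norm{x-z}^2_{\grad^2 J(\eta)}$ appears without the $1+\delta_{z,\eta}$ factor that the naive approach would produce. The key observation is that if we split the mean-value operator $A \defeq \int_0^1 \grad^2 J(\realoptx+t(z-\realoptx))\,dt$ (which by the fundamental theorem of calculus satisfies $\grad J(z)-\grad J(\realoptx) = A(z-\realoptx)$) as $A = \grad^2 J(\eta) + (A-\grad^2 J(\eta))$, then the ``exact'' piece $\iprod{\grad^2 J(\eta)(z-\realoptx)}{x-\realoptx}$ gives precisely the three squared $\grad^2 J(\eta)$-norms with coefficients $\frac{1}{2}, \frac{1}{2}, -\frac{1}{2}$ via the polarisation identity \eqref{eq:standard-identity}, while the ``error'' piece absorbs all $\delta_{z,\eta}$ dependence.

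For the error piece, each point $\realoptx+t(z-\realoptx)$ lies in $\closure \B(\norm{z-\realoptx}, \realoptx)$, so definition \eqref{eq:delta-eta} integrates to the symmetric operator bound $-\delta_{z,\eta}\grad^2 J(\eta) \le A-\grad^2 J(\eta) \le \delta_{z,\eta}\grad^2 J(\eta)$. Conjugating by $\grad^2 J(\eta)^{1/2}$ and applying the ordinary Cauchy--Schwarz inequality to the resulting operator of norm at most $\delta_{z,\eta}$ yields $|\iprod{(A-\grad^2 J(\eta))(z-\realoptx)}{x-\realoptx}| \le \delta_{z,\eta}\norm{z-\realoptx}_{\grad^2 J(\eta)}\norm{x-\realoptx}_{\grad^2 J(\eta)}$, which Young's inequality then dominates by $\frac{\delta_{z,\eta}}{2}(\norm{z-\realoptx}^2_{\grad^2 J(\eta)}+\norm{x-\realoptx}^2_{\grad^2 J(\eta)})$. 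Summing the exact and error contributions produces \eqref{eq:three-point-hypomonotonicity-c2x}.

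For \eqref{eq:three-point-smoothness-c2x}, I reuse the Taylor bridge from the proof of \cref{lemma:c2-smoothness}: a second-order expansion of $J$ about $\realoptx$ yields some $\zeta_0$ on the segment $[\realoptx, x]$ satisfying $\iprod{\grad J(z)}{x-\realoptx} - [J(x)-J(\realoptx)] = \iprod{\grad J(z)-\grad J(\realoptx)}{x-\realoptx} - \frac{1}{2}\norm{x-\realoptx}^2_{\grad^2 J(\zeta_0)}$. The hypothesis $x \in \closure \B(\norm{z-\realoptx}, \realoptx)$ keeps the entire segment $[\realoptx, x]$ inside the same ball, so \eqref{eq:delta-eta} gives $\grad^2 J(\zeta_0) \le (1+\delta_{z,\eta})\grad^2 J(\eta)$. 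Inserting \eqref{eq:three-point-hypomonotonicity-c2x} and collapsing the $\norm{x-\realoptx}^2_{\grad^2 J(\eta)}$ coefficient via $\frac{1-\delta_{z,\eta}}{2}-\frac{1+\delta_{z,\eta}}{2} = -\delta_{z,\eta}$ delivers \eqref{eq:three-point-smoothness-c2x}. The only delicate point, and the likely main obstacle in writing this rigorously, is justifying the weighted Cauchy--Schwarz bound for the indefinite self-adjoint operator $A - \grad^2 J(\eta)$ squeezed between $\pm \delta_{z,\eta}\grad^2 J(\eta)$; everything else is polarisation and Taylor expansion.
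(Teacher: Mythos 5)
Your proof follows essentially the same approach as the paper: split the Hessian term at $\eta$, use polarisation on the $\grad^2 J(\eta)$ part, bound the error via the operator sandwich $-\delta_{z,\eta}\grad^2 J(\eta) \le \cdot \le \delta_{z,\eta}\grad^2 J(\eta)$ and Cauchy--Schwarz, then reuse the second-order Taylor identity \eqref{eq:three-point-smoothness-c2-proof} for the value inequality. You are slightly more careful than the paper in two places — using the integral (rather than pointwise) mean-value representation of $\grad J(z)-\grad J(\realoptx)$, and explicitly justifying the weighted Cauchy--Schwarz step by conjugating with $\grad^2 J(\eta)^{1/2}$ — which also makes the missing squares in the paper's display \eqref{eq:three-point-hypomonotonicity-pre} come out correctly.
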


\begin{proof}
    By Taylor expansion, for some $\zeta$ between $z$ and $\realoptx$, we have
    \begin{equation}
        \label{eq:three-point-hypomonotonicity-pre}
        \begin{split}
        \iprod{\grad J(z)-\grad J(\realoptx)}{x-\realoptx}
        &
        =\iprod{\grad^2 J(\zeta)(z-\realoptx)}{x-\realoptx}
        \\
        &
        =\iprod{\grad^2 J(\eta)(z-\realoptx)}{x-\realoptx}
        \\ \MoveEqLeft[-1]
        +\iprod{[\grad^2 J(\zeta)-\grad^2 J(\eta)](z-\realoptx)}{x-\realoptx}
        \\
        &
        \ge
        \iprod{\grad^2 J(\eta)(z-\realoptx)}{x-\realoptx}
        \\ \MoveEqLeft[-1]
        -
        \frac{\delta_{z,\eta}}{2}\norm{x-\realoptx}_{\grad^2 J(\eta)}
        -
        \frac{\delta_{z,\eta}}{2}\norm{z-\realoptx}_{\grad^2 J(\eta)}.
        \end{split}
    \end{equation}
    In the last step we have used Cauchy's inequality, and the definition of $\delta_{z,\eta}$ following $\zeta \in \closure \B(\norm{z-\realoptx}, \realoptx)$.
    The standard three-point or Pythagoras' identity states
    \[
        \iprod{\grad^2 J(\eta)(z-\realoptx)}{x-\realoptx}
        =
        \frac{1}{2}\norm{z-\realoptx}^2_{\grad^2 J(\eta)}
        +
        \frac{1}{2}\norm{x-\realoptx}^2_{\grad^2 J(\eta)}
        -
        \frac{1}{2}\norm{x-z}^2_{\grad^2 J(\eta)}.
    \]
    Applying this in \eqref{eq:three-point-hypomonotonicity-pre}, we obtain \eqref{eq:three-point-hypomonotonicity-c2x}.

    To prove \eqref{eq:three-point-smoothness-c2x}, we use \eqref{eq:three-point-smoothness-c2-proof}, the definition of $\delta_{z,\eta}$, and \eqref{eq:three-point-hypomonotonicity-c2x}.
    %
\end{proof}
\cbend

\section{Projected gradients and smoothness}
\label{sec:subspace-smoothness}

The next lemma generalises well-known properties \cite[see, e.g.,][]{bauschke2017convex} of smooth convex functions to projected gradients, when we take $P$ as projection operator.
With $P$ a random projection, taking the expectation in \eqref{eq:subspace-smoothness}, we in particular obtain a connection to the Expected Separable Over-approximation property in the stochastic coordinate descent literature \cite{richtarik2012parallel}.

\begin{lemma}
    \label{lemma:subspace-smoothness}
    Let $J \in \convex(X)$, and $P \in \linear(X; X)$ be self-adjoint and positive semi-definite on a Hilbert space $X$. Suppose $P$ has a pseudo-inverse $\pinv P$ satisfying $ P \pinv P P = P$.
    Consider the properties:
    \begin{enumerate}[label=(\roman*)]
        \item\label{item:subspace-lipschitz}
            $P$-relative Lipschitz continuity of $\grad J$ with factor $L$:
            \begin{equation}
                \label{eq:subspace-lipschitz}
                \norm{\grad J(x)-\grad J(y)}_P \le L \norm{x-y}_{\pinv P}
                \quad (x, y \in X).
            \end{equation}
        \item\label{item:subspace-nx}
            The $P$-relative property
            \begin{equation}
                \label{eq:subspace-nx}
                \iprod{\grad J(x+Ph) - \grad J(x)}{Ph} \le L\norm{h}_P^2
                \quad (x, h \in X).
            \end{equation}
        \item\label{item:subspace-smoothness}
            $P$-relative smoothness of $J$ with factor $L$:
            \begin{equation}
                \label{eq:subspace-smoothness}
                J(x+Ph) \le J(x) + \iprod{\grad J(x)}{Ph}+\frac{L}{2}\norm{h}_P^2
                \quad (x, h \in X).
            \end{equation}
        \item\label{item:subspace-grad-smoothness}
            The $P$-relative property
            \begin{equation}
                \label{eq:subspace-grad-smoothness}
                J(y) \le J(x) + \iprod{\grad J(y)}{y-x}-\frac{1}{2L}\norm{\grad J(x)-\grad J(y)}_P^2
                \quad (x, h \in X).
            \end{equation} 
        \item\label{item:subspace-coco}
            $P$-relative co-coercivity of $\grad J$ with factor $\inv L$:
            \begin{equation}
                \label{eq:subspace-coco}
                \inv L \norm{\grad J(x)-\grad J(y)}_P^2
                \le
                \iprod{\grad J(x)-\grad J(y)}{x-y}
                \quad (x, y \in X).
            \end{equation}           
    \end{enumerate}   
    We have \ref{item:subspace-lipschitz} $\implies$ \ref{item:subspace-nx} $\iff$ \ref{item:subspace-smoothness} $\implies $ \ref{item:subspace-grad-smoothness} $\implies$ \ref{item:subspace-coco}.
    If $P$ is invertible, all are equivalent.
\end{lemma}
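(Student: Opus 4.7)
The plan is to traverse the chain \ref{item:subspace-lipschitz} $\implies$ \ref{item:subspace-nx} $\iff$ \ref{item:subspace-smoothness} $\implies$ \ref{item:subspace-grad-smoothness} $\implies$ \ref{item:subspace-coco}, and then close the loop \ref{item:subspace-coco} $\implies$ \ref{item:subspace-lipschitz} under the invertibility hypothesis. A recurring identity, following from $P$ being self-adjoint and from $P \pinv P P = P$, is
\begin{equation*}
    \norm{Ph}_{\pinv P}^2
    = \iprod{Ph}{\pinv P P h}
    = \iprod{h}{P\pinv P P h}
    = \iprod{h}{Ph}
    = \norm{h}_P^2.
\end{equation*}
Combined with Cauchy's inequality $\iprod{g}{Ph} = \iprod{P^{1/2}g}{P^{1/2}h} \le \norm{g}_P \norm{h}_P$, this immediately yields \ref{item:subspace-lipschitz} $\implies$ \ref{item:subspace-nx} on taking $g \defeq \grad J(x+Ph) - \grad J(x)$ and using \ref{item:subspace-lipschitz} to bound $\norm{g}_P \le L\norm{Ph}_{\pinv P}=L\norm{h}_P$.

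For \ref{item:subspace-nx} $\iff$ \ref{item:subspace-smoothness} I follow the standard descent-lemma argument. The forward direction (\ref{item:subspace-nx} $\implies$ \ref{item:subspace-smoothness}) integrates $t \mapsto \iprod{\grad J(x+tPh)}{Ph}$ on $[0,1]$ and applies \ref{item:subspace-nx} to the integrand after pulling out a factor of $t$; the reverse sums \ref{item:subspace-smoothness} at $(x,h)$ with \ref{item:subspace-smoothness} at $(x+Ph, -h)$, cancelling the function values.

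The main obstacle is \ref{item:subspace-smoothness} $\implies$ \ref{item:subspace-grad-smoothness}, because in the projected setting $\grad J(x) - \grad J(y)$ need not lie in $\rangeSymbol(P)$. The trick is to apply \ref{item:subspace-smoothness} to the auxiliary convex function $\psi(z) \defeq J(z) - \iprod{\grad J(y)}{z}$, whose gradient $\grad J(z) - \grad J(y)$ vanishes at $y$, so that $y$ minimises $\psi$ by convexity. Substituting the descent direction $h \defeq -\inv L (\grad J(x) - \grad J(y))$ into
\begin{equation*}
    \psi(x + Ph) \le \psi(x) + \iprod{\grad J(x) - \grad J(y)}{Ph} + \frac{L}{2}\norm{h}_P^2
\end{equation*}
collapses the two rightmost terms to $-\inv{(2L)}\norm{\grad J(x) - \grad J(y)}_P^2$. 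Invoking $\psi(y) \le \psi(x + Ph)$ and unrolling the definition of $\psi$ then produces \ref{item:subspace-grad-smoothness}.

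Finally \ref{item:subspace-grad-smoothness} $\implies$ \ref{item:subspace-coco} is one line of symmetrisation: add \ref{item:subspace-grad-smoothness} to the copy obtained by swapping $x$ and $y$. When $P$ is invertible, \ref{item:subspace-coco} $\implies$ \ref{item:subspace-lipschitz} closes the loop via the dual Cauchy estimate $\iprod{g}{x - y} \le \norm{g}_P \norm{x-y}_{P^{-1}}$ applied to the right-hand side of \ref{item:subspace-coco}, after dividing by $\norm{g}_P$ with $g \defeq \grad J(x) - \grad J(y)$.
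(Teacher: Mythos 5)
Your proof is correct and follows essentially the same route as the paper's: \ref{item:subspace-lipschitz}$\Rightarrow$\ref{item:subspace-nx} via the identity $\norm{Ph}_{\pinv P}=\norm{h}_P$ and Cauchy--Schwarz, \ref{item:subspace-nx}$\Leftrightarrow$\ref{item:subspace-smoothness} via integration/summation, \ref{item:subspace-smoothness}$\Rightarrow$\ref{item:subspace-grad-smoothness} by optimising the $P$-relative surrogate, \ref{item:subspace-grad-smoothness}$\Rightarrow$\ref{item:subspace-coco} by symmetrisation, and Cauchy--Schwarz to close the loop when $P$ is invertible. Your explicit introduction of $\psi(z)\defeq J(z)-\iprod{\grad J(y)}{z}$ merely packages in words what the paper does by noting that the left-hand side of the shifted inequality is minimised at $x+Ph=y$; the substance is identical, and your version is arguably a bit clearer about why $y$ is a valid reference point in the projected setting.
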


\begin{proof}
    \ref{item:subspace-lipschitz} $\implies$ \ref{item:subspace-nx}:
    Take $y=x+Ph$ and multiply \eqref{eq:subspace-lipschitz} by $\norm{h}_P$.
    Then use Cauchy--Schwarz.

    \ref{item:subspace-nx} $\implies$ \ref{item:subspace-smoothness}:
    Using the mean value theorem and \eqref{eq:subspace-nx}, we compute \eqref{eq:subspace-smoothness}:
    \begin{equation*}
        \begin{split}
        J(x+Ph) & - J(x) - \iprod{\grad J(x)}{Ph} 
        =\int_0^1 \iprod{\grad J(x+tPh)}{Ph} \d t - \iprod{\grad J(x)}{Ph} 
        \\
        &
        =\int_0^1 \iprod{\grad J(x+tPh)-\grad J(x)}{Ph} \d t
        \le \int_0^1 t \d t \cdot L\norm{h}_P^2
        = \frac{L}{2} \norm{h}_P^2.
        \end{split}
    \end{equation*}

    \ref{item:subspace-smoothness} $\implies$ \ref{item:subspace-nx}:
    Add together \eqref{eq:subspace-smoothness} for $x=x'$ and $x=x'+Ph$.

    \ref{item:subspace-smoothness} $\implies$ \ref{item:subspace-grad-smoothness}:
    Adding $-\iprod{\grad J(y)}{x+Ph}$ on both sides of \eqref{eq:subspace-smoothness}, we get
    \[
        J(x+Ph) - \iprod{\grad J(y)}{x+Ph} \le J(x) - \iprod{\grad J(y)}{x} + \iprod{\grad J(x)-\grad J(y)}{Ph}+\frac{L}{2}\norm{h}_P^2.
    \]
    The left hand side is minimised with respect to $x$ by taking $x=y-Ph$.
    Taking on the right-hand side $h=\inv L(\grad J(y)-\grad J(x))$ therefore gives
    \eqref{eq:subspace-grad-smoothness}.

    \ref{item:subspace-grad-smoothness} $\implies$ \ref{item:subspace-coco}:
    Summing the estimate \eqref{eq:subspace-grad-smoothness} with the same estimate with $x$ and $y$ exchanged, we obtain \eqref{eq:subspace-coco}.

    \ref{item:subspace-coco} $\implies$ \ref{item:subspace-lipschitz} when $P$ is invertible: Cauchy--Schwarz.
\end{proof}


\bibliographystyle{plain}
 \providecommand{\eprint}[1]{\href{http://arxiv.org/abs/#1}{arXiv:#1}}
  \providecommand{\eprint}[1]{\href{http://arxiv.org/abs/#1}{arXiv:#1}}
  \providecommand{\noopsort}[1]{}


\end{document}